\newcommand{\p}{^}
	\newcommand{\rmd}{\mathrm{d}}
\newcommand{\rme}{\mathrm{e}}
\newcommand{\rmi}{\mathrm{i}}
\newcommand{\Rea}{\mathrm{Re}}
\newcommand{\Ima}{\mathrm{Im}}
\newcommand{\E}{\mathrm{E}}
\newcommand{\M}{\mathrm{M}}
\newcommand{\Lrm}{\mathrm{L}}
\newcommand{\Var}{\mathrm{Var}}
\newcommand{\Ascr}{\mathscr{A}}
\newcommand{\Bscr}{\mathscr{B}}
\newcommand{\Cscr}{\mathscr{C}}
\newcommand{\Escr}{\mathscr{E}}
\newcommand{\Fscr}{\mathscr{F}}
\newcommand{\Gscr}{\mathscr{G}}
\newcommand{\Hscr}{\mathscr{H}}
\newcommand{\Lscr}{\mathscr{L}}
\newcommand{\Oscr}{\mathscr{O}}
\newcommand{\Pscr}{\mathscr{P}}
\newcommand{\Sscr}{\mathscr{S}}
\newcommand{\ka}{\kappa}
\newcommand{\zt}{\zeta}
\newcommand{\et}{\eta}
\newcommand{\al}{\alpha}
\newcommand{\bt}{\beta}
\newcommand{\ep}{\varepsilon}
\newcommand{\lm}{\lambda}
\newcommand{\gm}{\gamma}
\newcommand{\sig}{\sigma}
\newcommand{\om}{\omega}
\newcommand{\Om}{\Omega}
\newcommand{\cadlag}{c\`adl\`ag }
\newcommand{\Ebb}{\mathbb{E}}
\newcommand{\EE}{\mathbb{E}}
\newcommand{\Fbb}{\mathbb{F}}
\newcommand{\Pbb}{\mathbb{P}}
\newcommand{\Rbb}{\mathbb{R}}
\newcommand{\Cbb}{\mathbb{C}}
\newcommand{\bi}{\begin{itemize}}
\newcommand{\ei}{\end{itemize}}
\newcommand{\be}{\begin{enumerate}}
\newcommand{\ee}{\end{enumerate}}
\newcommand{\beq}{\begin{equation}}
\newcommand{\eeq}{\end{equation}}
\newcommand{\beqs}{\begin{equation*}}
\newcommand{\eeqs}{\end{equation*}}
\newcommand{\beqa}{\begin{eqnarray}}
\newcommand{\eeqa}{\end{eqnarray}}
\newcommand{\beqas}{\begin{eqnarray*}}
\newcommand{\eeqas}{\end{eqnarray*}}
\newcommand{\aPP}[2]{\ensuremath{\langle #1,#2 \rangle}}
\newtheorem{theorem}{Theorem}[section]
\newtheorem{lemma}[theorem]{Lemma}
\newtheorem{proposition}[theorem]{Proposition}
\newtheorem{corollary}[theorem]{Corollary}
\theoremstyle{definition}
\newtheorem{definition}[theorem]{Definition}
\newtheorem{assumption}[theorem]{Assumption}
\newtheorem{remark}[theorem]{Remark}
\numberwithin{equation}{section}
\newcommand{\RR}{\mathbb{R}}
\begin{document}
\title{Semi-Static Variance-Optimal Hedging\\ in Stochastic Volatility Models\\ with Fourier Representation}
\author{P.\ Di Tella\footnote{Corresponding author. Mail: Paolo.Di\_Tella@tu-dresden.de.}, M.\ Haubold, M.\ Keller-Ressel\thanks{MKR thanks Johannes Muhle-Karbe for early discussions on the idea of ``Variance-Optimal Semi-Static Hedging''. We acknowledge funding from the German Research Foundation (DFG) under grant ZUK 64 (all authors) and KE 1736/1-1 (MKR, MH)}\\\small{Technische Universit\"at Dresden, Intitut f\"ur Stochastik}}

\date{}
\maketitle
\begin{abstract}
In a financial market model, we consider the variance-optimal semi-static hedging of a given contingent claim, a generalization of the classic variance-optimal hedging. To obtain a tractable formula for the expected squared hedging error and the optimal hedging strategy, we use a Fourier approach in a general multidimensional semimartingale factor model. As a special case, we recover existing results for variance-optimal hedging in affine stochastic volatility models. We apply the theory to set up a variance-optimal semi-static hedging strategy for a variance swap in both the Heston and the $3/2$-model, the latter of which is a non-affine stochastic volatility model. 
\end{abstract}
\section{Introduction}\label{sec:intro}
Variance-optimal hedging was introduced by \cite{S84, FS86} as a tractable method of hedging contingent claims in incomplete markets (cf.~\cite{Sch99} for an overview). The main idea of variance-optimal hedging is to find a self-financing trading strategy $\vartheta$ for a given claim $\eta^0$, which minimizes the (risk-neutral) variance of the residual hedging error at a terminal time $T > 0$. As shown in \cite{FS86}, the solution of this optimization problem is given by the so-called Galtchouk--Kunita--Watanabe (GKW) decomposition of $\eta^0$ with respect to the underlying stock $S$.  The GKW decomposition takes the form 
\[
\eta^0= \EE[\eta^0] + (\vartheta \cdot S)_T + L^0_T\,,
\]
where $L_T$ is the terminal value of a local martingale that is orthogonal to the underlying $S$. For applications it is important to have tractable formulas for the variance-optimal strategy $\vartheta$ and for the minimal expected hedging error $\epsilon^2 := \EE[(L_T^0)^2]$. To this end, several authors have combined variance-optimal hedging with \emph{Fourier methods}. This idea was first pursued in \cite{HK06}, where in an exponential L\'evy market the expected hedging error has been explicitly computed for contingent claims with integrable and invertible Laplace transform. This method has been further investigated in \cite{KalP07}, where the underlying market model is a stochastic volatility model described by an \emph{affine semimartingale}.

In this paper we extend the results of \cite{HK06, KalP07} in two directions: First, we consider a very general setting of semimartingale factor models that is not limited to processes with independent increments or with affine structure. Second, in addition to classic variance optimal hedging, we also consider the variance-optimal \emph{semi-static} hedging problem that we have introduced in \cite{DTHKR17}. The semi-static hedging problem combines dynamic trading in the underlying $S$ with static (i.\ e.\ buy-and-hold) positions in a finite number of given contingent claims $(\eta^1, \dotsc, \eta^d)$, e.\ g.\  European puts and calls. Such semi-static hedging strategies have been considered for the hedging of Barrier options (cf.~\cite{carr2011semi}), in model-free hedging approaches based on martingale optimal transport (cf.~\cite{beiglbock2013model, B17}), and finally for the semi-static replication of variance swaps by Neuberger's formula (cf.~\cite{N94}).

As shown in \cite{DTHKR17} and summarized in Section~\ref{sec:GKW.dec} below, the semi-static hedging problem can be solved under a variance-optimality criterion when also the \emph{covariances} $\epsilon_{ij} := \EE[L_T^i L_T^j]$ of the residuals in the GKW-decompositions of all supplementary claims $(\eta^1, \dotsc, \eta^d)$ are known. This leads to a natural extension of the questions investigated in \cite{HK06, KalP07}: If the model for $S$ allows for explicit calculation of the characteristic function $S = \log X$ (such as L\'evy models, the Heston model or the 3/2 model) and the supplementary claims $(\eta^1, \dotsc, \eta^d)$ have a Fourier representation (such as European puts and calls), how can we compute the quantities $\epsilon_{ij}$? 

In our main results, Theorems~\ref{thm:hed.err}, \ref{thm:pred.cov.mixt} and \ref{thm:comsec3.sec4} we provide answers to this question and show that the covariances $\epsilon_{ij} := \E[L_T^i L_T^j]$ and the optimal strategies can be recovered by Fourier-type integrals, extending the results of \cite{HK06, KalP07}. In addition, these results serve as the rigorous mathematical underpinning of the more applied point of view taken in \cite{DTHKR17}.

This paper has the following structure: In Section \ref{sec:GKW.dec} we collect some notions about stochastic analysis and discuss semi-static variance optimal strategies. 
Section \ref{sec:sem.quad} is devoted to the study of the GKW decomposition for some square integrable martingales, which play a fundamental role in our setting.
In Section \ref{sec:GKW.eu} we discuss certain problems related to stochastic analysis of processes depending on a parameter and Fourier methods in a general multidimensional factor model. We also recover some of the results of \cite{KalP07} for affine stochastic volatility models. In Section \ref{sec:app} we apply the results of the previous sections to the Heston model (which is an affine model) and the $3/2$-model (which is non-affine).

\section{Basic Tools and Motivation}\label{sec:GKW.dec}
By $(\Om,\Fscr,\Pbb)$ we denote a complete probability space and by $\Fbb$ a filtration satisfying the usual conditions. We fix a time interval $[0,T]$, $T>0$, assume that $\Fscr_0$ is trivial and set $\Fscr=\Fscr_T$.

Because of the usual conditions of $\Fbb$, we only consider \cadlag martingales and when we say that a process $X$ is a martingale we implicitly assume the \cadlag property of the paths of $X$.

A real-valued martingale $X$ is square integrable if $X_T\in L\p2(\Pbb):=L\p2(\Om,\Fscr,\Pbb)$. By $\Hscr\p{\,2}=\Hscr\p{\,2}(\Fbb)$ we denote the set of real-valued $\Fbb$-adapted square integrable martingales. For $X\in\Hscr\p{\,2}$, we define $\|X\|_{\Hscr\p{\,2}}:=\|X_T\|_2$, where $\|\cdot\|_2$ denotes the $L\p2(\Pbb)$-norm. The space $(\Hscr\p{\,2},\|\cdot\|_2)$ is a Hilbert space; we also introduce $\Hscr\p{\,2}_{\,0}:=\{X\in\Hscr\p{\,2}: X_{\,0}=0\}$.

If $X,Y$ belong to $\Hscr\p{\,2}$, then there exists a unique predictable c\`adl\`ag process of finite variation, denoted by $\aPP{X}{Y}$ and called predictable covariation of $X$ and $Y$, such that $\aPP{X}{Y}_{\,0}=0$ and $XY-\aPP{X}{Y}$ is a uniformly integrable martingale. Clearly $\Ebb[X_TY_T-X_{\,0}Y_{\,0}]=\Ebb[\aPP{X}{Y}_T]$. We say that two local martingales $X$ and $Y$ are orthogonal if $XY$ is a local martingale starting at zero. If $X$ and $Y$ are (locally) square integrable (local) martingales, they are orthogonal if and only if $X_0Y_0=0$ and $\aPP{X}{Y}=0$. 

If $H$ is a measurable process and $A$ a process of finite variation, by $H\cdot A$ we denote the (Riemann--Stieltjes) integral process of $H$ with respect to $A$, i.\ e., $H\cdot A_t(\om):=\int_0\p tH_s(\om)\rmd A_s(\om)$. We also use the notation $\int_0\p\cdot H_s\rmd A_s$ to denote the process $H\cdot A$. We recall that $H\cdot A$ is of finite variation if and only if $|H|\cdot \mathrm{Var}(A)_t(\om)<+\infty$, for every $t\in[0,T]$ and $\om\in\Om$. Notice that, if $H\cdot A$ is of finite variation, then
\begin{equation}\label{eq:var.int}
\Var(H\cdot A)=|H|\cdot\Var(A).
\end{equation} 

For $X\in\Hscr\p{\,2}$, we define
\[
\Lrm_\Cbb\p2(X):=\{\vartheta\textnormal{ predictable and complex-valued: } \Ebb[|\vartheta|\p2\,\cdot\aPP{X}{X}_T]<+\infty\},
\] 
the space of complex-valued integrands for $X$. For $\vartheta\in\Lrm_\Cbb\p2(X)$, $\vartheta\cdot X$ denotes the stochastic integral process of $\vartheta$ with respect to $X$ and it is characterized as it follows: Let $Z$ be a complex-valued square integrable martingale. Then $Z=\vartheta\cdot X$ if and only if $Z_{\,0}=0$ and $\aPP{Z}{Y}=\vartheta\cdot\aPP{X}{Y}$, for every $Y\in\Hscr\p2$. We also use the notation $\int_0\p\cdot \vartheta_s\rmd X_s$ to indicate the martingale $\vartheta\cdot X$.
By $\Lrm\p2(X)$ we denote the subspace of predictable integrands in $\Lrm_\Cbb\p2(X)$ which are \emph{real-valued}.

\paragraph{Variance-Optimal Hedging. } In a financial market, where the price process is described by a strictly positive square integrable martingale $S$, a square integrable contingent claim $\et$ is given and $H=(\Ebb[\et|\Fscr_t])_{t\in[0,T]}$ is the associated martingale. We then consider the optimization problem
\begin{equation}\label{var.op.cl}
\ep\p2=\min_{c\in\Rbb,\vartheta\in\Lrm\p2(S)}\Ebb\big[\big(c+\vartheta\cdot S_T-\et\big)\p2\big].
\end{equation}
The meaning of \eqref{var.op.cl} is to minimize the variance of the hedging error: If the market is complete the solution of  \eqref{var.op.cl} is identically equal to zero, as perfect replication is possible. If the market is incomplete, the squared hedging error will be strictly positive, in general. In \cite{FS86} was shown that the solution $(c\p\ast,\vartheta\p\ast)$ of \eqref{var.op.cl} always exists and is given by the so-called Galtchouk-Kunita-Watanabe (GKW) decomposition of $H$. The GKW decomposition is the unique decomposition
\begin{equation}\label{eq:gkw.dec}
H=H_{\,0}+\vartheta\cdot X+L\,,
\end{equation}
where $\vartheta\in\Lrm\p2(X)$ and the martingale $L$ is in the orthogonal complement of $\Lscr\p2(X)$  in $(\Hscr\p{\,2}_{\,0}, \|\cdot\|_2)$. The solution of the hedging problem \eqref{var.op.cl} is then given by $c\p\ast=\Ebb[\et]$ and $\vartheta\p\ast$ being the integrand in the GKW decomposition of $H$ with respect to $S$. The minimal hedging error can be expressed as
\[
\ep\p2=\Ebb[\aPP{L}{L}_T]=\Ebb[L_T\p2],
\]
using the orthogonal component $L$ in the GKW decomposition \eqref{eq:gkw.dec}.

Moreover, notice that, if $L\in\Hscr\p2_0$ is orthogonal to $\Lscr\p2(S)$ in the Hilbert space sense, then $L$ is also orthogonal to $S$, i.\ e.\ $LS$ is a martingale starting at zero and, in particular, $\aPP{L}{S}=0$. Therefore, from \eqref{eq:gkw.dec} we can compute the optimal strategy $\vartheta^\ast$ by
\begin{equation}\label{eq:gkw.dec.int}
\aPP{S}{H}=\vartheta^\ast\cdot\aPP{S}{S}=\int_0\p\cdot\vartheta^\ast_s\,\rmd \aPP{S}{S}_s\,,
\end{equation}
that is $\vartheta^\ast=\rmd\aPP{S}{H}\slash\rmd\aPP{S}{S}$ in the Radon-Nikodym-derivative sense.

\paragraph{Semi-Static Variance-Optimal Hedging. } In \cite{DTHKR17} we have introduced the following generalization of the classic variance-optimal hedging problem \eqref{var.op.cl}: Assume that in addition to a dynamic (i.\ e.\@ continuously rebalanced) position in the underlying stock, a static (i.\ e.\@ buy-and-hold) position in a fixed basket of contingent claims $(\et^1, \dots, \et^d)$ is allowed. 
More precisely, let $S$ be a strictly positive square integrable martingale modelling the evolution of the stock price and $\et=(\et^1, \dots, \et^d)^\top$ the \emph{fixed} vector of square integrable contingent claims.
\begin{definition}\label{def:sems.st} 
(i) A \emph{semi-static} strategy is a pair $(\vartheta,v)\in\Lrm\p2(S)\times\Rbb\p d$. A semi-static strategy of the form $(\vartheta,0)$ (resp., $(0,v)$) is called a \emph{dynamic} (resp., \emph{static}) strategy.

(ii) A semi-static variance-optimal hedging strategy for the square integrable contingent claim $\et\p0$ is a solution of the \emph{semi-static variance-optimal hedging problem} given by
\begin{equation}\label{eq:setup}
\ep\p2=\min_{v\in\Rbb\p{d}, \vartheta \in \Lrm^2(S) ,c\in\Rbb}\Ebb\left[\Big(c-v\p\top\Ebb[\et]+\vartheta\cdot S_T-(\et\p{\,0}-v\p\top \et)\Big)\p{\,2}\right].    
\end{equation}
\end{definition}
Comparing the solution of \eqref{eq:setup} with the one of \eqref{var.op.cl}, it is clear that the latter one will be smaller or equal than the first one, as the minimization problem in \eqref{eq:setup} is taken over a bigger set. Therefore, semi-static strategies allow for a reduction of the quadratic hedging error in comparison with classic dynamic hedging in the underlying. 

Following \cite{DTHKR17}, the semi-static hedging problem can be split into an inner and outer optimization problem, i.\ e.\ it can be written as
\begin{equation}\label{eq:inner_outer}
\begin{cases}
\epsilon^2(v) = \min_{\vartheta \in \Lrm^2(S), c \in \RR}\EE\left[\Big(c-v^\top\Ebb[{\et}]+\vartheta\cdot S_T - (\et^0-v^\top \et)\Big)^2\right], & \quad \text{(inner prob.)}\\    
\epsilon^2 = \min_{v \in \RR^d} \epsilon(v)^2. &\quad \text{(outer prob.)}\\
\end{cases}
\end{equation}
Notice that the inner optimization problem in \eqref{eq:inner_outer} is a classical variance-optimal hedging problem as in \eqref{var.op.cl}, while the outer problem becomes a finite dimensional quadratic optimization problem of the form  
\begin{equation}\label{eq:hed.err}
\varepsilon(v)\p{\,2} =A - 2 v\p\top B+ v^\top C v\,.
\end{equation}
As shown in \cite[Theorem 2.3]{DTHKR17} the coefficients of this problem can be written as
\begin{align}\label{not:var.cov}
A:=\Ebb[\aPP{L\p0}{L\p0}_T],\qquad B\p i:=\Ebb[\aPP{L\p0}{L\p i}_T],\qquad C\p{ij}:=\Ebb[\aPP{L\p i}{L\p j}_T]\,,\quad i,j=1,\ldots,d.
\end{align}
where $L^0$ and $L^i$ are the orthogonal parts of the GKW-decompositions of $H^0$ and $H^i$ respectively. Moreover, we can write
\begin{equation}\label{eq:pb.res}
\aPP{L\p i}{L\p j}=\aPP{H\p i}{H\p j}-\vartheta\p i\vartheta\p j\cdot\aPP{S}{S},\quad i,j=0,\ldots,d.
\end{equation} 
where the $\vartheta^i$ are the integrands in the respective GKW decompositions. If $C$ is invertible, then the optimal strategy $(v^\ast, \vartheta^\ast)$ for the semi-static variance-optimal hedging problem \eqref{eq:setup} is given by 
\[v^* = C^{-1} B, \qquad \vartheta^\ast = \vartheta^0 - (v^\ast)^\top (\vartheta^1, \dotsc, \vartheta^d).\]

Hence, to solve the semi-static variance-optimal hedging problem, it is necessary to compute all covariations of the residuals in the GKW decomposition of $\et\p i$, $i=1,\ldots,d$. 
This extends beyond the classic variance-optimal hedging problem considered in \cite{KalP07}, where it was enough to determine $A$ in order to compute the squared hedging error and the optimal strategy.

\section{The GKW-decomposition in factor models}\label{sec:sem.quad}
 To solve the inner minimization problem in the semi-static hedging problem \eqref{eq:setup}, it is necessary to compute the predictable covariations of the GKW-residuals in \eqref{eq:pb.res} and then their expectation to get $A$, $B$ and $C$ in \eqref{not:var.cov}. In this section, we consider the particular case of a so-called factor-model, where both the underlying $S$ and the claims $Y^i$ of interest only depend on the state of a finite-dimensional economic factor process $X = (X^1, \dotsc, X^n)$. More precisely, we assume that $X$ is a quasi-left-continuous locally square integrable semimartingale (cf.\ \cite[Definition II.2.27]{JS00}) with state space $(E,\Escr):=(\Rbb\p n,\Bscr(\Rbb\p n))$; the underlying stock is a locally square integrable local martingale\footnote{In order to compute the quantities in \eqref{not:var.cov}, we will need the stronger assumption that $S$ is a square integrable martingale. However, this assumption is not needed to compute \eqref{eq:pb.res}} given by $S = e^{X^1}$, and the claims of interest are of the form 
\begin{equation}\label{eq:Y_factor}
Y_t^i = f^i(t,X_t^1, \dotsc, X_t^n)
\end{equation}
where the $f^i$ are in $C^{1,2}(\Rbb_+\times\Rbb\p n)$, $i=1,\ldots,d$. To simplify notation we only consider two claims $Y^1$ and $Y^2$; the extension to more than two claims is straight-forward. 

To prepare for our results and their proofs, we recall that any local martingale $X$ can be decomposed in a unique way as $X=X_0+X\p c+X\p d$, where $X\p c$ is a continuous local martingale starting at zero, called \emph{the continuous part} of $X$, while $X\p d$, called \emph{the purely discontinuous part} of $X$, is a local martingale starting at zero which is orthogonal to all adapted and continuous local martingales (cf.\ \cite[Theorem I.4.18]{JS00}).  We also recall that a locally square integrable semimartingale $X$ is a special semimartingale and thus there exists a semimartingale decomposition (which is unique)
\begin{equation}\label{eq:sp.smart.dec}
X\p j=X\p j_0+M\p j+A\p j,\quad j=1,\ldots,n,
\end{equation}
where $A\p j$ is a predictable processes of finite variation and $M\p j$ a locally square integrable local martingale. Notice that $X\p {j,c}=M\p {j,c}$. Since $X$ is quasi-left-continuous, the predictable processes $A\p j$ are continuous and the local martingales $M\p j$ are quasi-left-continuous (cf.\ \cite[Corollary 8.9]{HWY92}).

We denote by  $\mu$ be the jump measure of $X$ and by $\nu$ its predictable compensator. In particular, $\mu$ is an integer valued random measure on $[0,T]\times E$, $T>0$. The space $\Gscr\p2_\textnormal{loc}(\mu)$ of the predictable integrands for the compensated jump measure $(\mu-\nu)$ is defined in \cite[Eq.~(3.62)]{J79}, as the space of real-valued processes $W$ on $\Rbb_+\times E$ which are $\Pscr\otimes\Escr$-measurable, $\Pscr$ denoting the predictable $\sig$-algebra on $\Om\times[0,T]$, such that the increasing process
\[
\sum_{s\leq\cdot}W(s,\om,\Delta X_s(\om))\p 21_{\{\Delta X_s(\om)\neq0\}}
\] 
is locally integrable. For $W\in\Gscr_\textnormal{loc}\p2(\mu)$, we denote by $W\ast(\mu-\nu)$ the stochastic integral of $W$ with respect to the compensated jump measure $(\mu-\nu)$ and define it as the unique purely discontinuous locally square integrable local martingale $Y$ such that
\[
\Delta Y_t(\om)=W(t,\om,\Delta X_t(\om)),
\] 
$\Delta Z$ denoting the jump process of a \cadlag process $Z$ with the convention $\Delta Z_0=0$.

The existence and uniqueness of this locally square integrable local martingale is guaranteed by \cite[Theorem I.4.56, Corollary I.4.19]{JS00}.
Observe that, if $U,W\in\Gscr\p2_\mathrm{loc}(\mu)$, then \cite[Theorem II.1.33(a)]{JS00}, yields
\begin{equation}\label{eq:com.pb}
\aPP{U\ast(\mu-\nu)}{W\ast(\mu-\nu)}_t=\int_{[0,t]\times E}U(s,x)W(s,x)\nu(\rmd s,\rmd x),\quad t\in[0,T].
\end{equation}

The next lemma is the key result for the computation of \eqref{eq:pb.res} under the factor-model assumption. 

\begin{lemma}\label{lem:rep.it.for}
Let $X$ be a locally square integrable semimartingale with canonical decomposition as in \eqref{eq:sp.smart.dec}. Let $Y=(Y_t)_{t\in[0,T]}$ be a locally square integrable local martingale of the form $Y_t=f(t,X\p1_t,\ldots,X\p n_t)$, where $f \in C^{1,2}(\Rbb_+\times\Rbb\p n)$. Then,
\begin{equation}\label{eq:rep.it.for}
Y=Y_0+\sum_{j=1}\p n\partial_{x_j} f_-\cdot X\p {j,c}+W\ast(\mu-\nu)
\end{equation}
where $f_t:=f(t,X\p1_t,\ldots,X\p n_t)$, $f_{t-}:=f(t,X\p1_{t-},\ldots,X\p n_{t-})$, $\partial_{x_j}$ denotes the $j$-th partial derivatives of $f$ and 
\begin{equation}\label{eq:proc.W}
W(t,\om,x_1,\ldots,x_n):=f(t,x_1+X\p1_{t-}(\om),\ldots,x_n+X\p n_{t-}(\om))-f(t,X\p1_{t-}(\om),\ldots,X\p n_{t-}(\om)).
\end{equation}
\end{lemma}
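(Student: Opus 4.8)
The plan is to apply the multidimensional It\^o formula for semimartingales to the function $f$ and then identify the terms of the resulting decomposition with the canonical martingale decomposition $Y=Y_0+Y^c+Y^d$. Since $Y=f(t,X^1,\ldots,X^n)$ is assumed to be a locally square integrable local martingale, its canonical (special) semimartingale decomposition has a vanishing finite-variation part; the It\^o formula will produce both a drift part and a local-martingale part, and the essential content of the lemma is that the local-martingale part splits exactly into the claimed continuous integral plus the compensated-jump integral.

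First I would write down It\^o's formula for $f(t,X_t^1,\ldots,X_t^n)$, using the canonical decomposition \eqref{eq:sp.smart.dec}. This yields
\begin{equation*}
Y_t=Y_0+\int_0^t \partial_s f_{s-}\,\rmd s+\sum_{j=1}^n \partial_{x_j}f_{-}\cdot X^j+\tfrac12\sum_{j,k=1}^n\partial_{x_j x_k}f_{-}\cdot\aPP{X^{j,c}}{X^{k,c}}+\sum_{s\leq t}\Big(\Delta f_s-\sum_{j=1}^n\partial_{x_j}f_{s-}\Delta X_s^j\Big),
\end{equation*}
where $\Delta f_s=f(s,X_s)-f(s,X_{s-})=W(s,\Delta X_s)$ with $W$ as in \eqref{eq:proc.W}. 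Then I would split the integral $\partial_{x_j}f_{-}\cdot X^j$ using $X^j=X_0^j+M^j+A^j$ into a local-martingale piece $\partial_{x_j}f_{-}\cdot M^j$, which I further decompose into its continuous part $\partial_{x_j}f_{-}\cdot X^{j,c}$ (recalling $X^{j,c}=M^{j,c}$) and a purely discontinuous part, plus a finite-variation piece $\partial_{x_j}f_{-}\cdot A^j$. The jump sum I would rewrite as $W\ast\mu$, i.e.\ the integral against the (uncompensated) jump measure, and then introduce its compensator to form $W\ast(\mu-\nu)$ at the cost of the predictable term $W\ast\nu$.

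Next I would collect all the terms. The stochastic integral $\sum_j\partial_{x_j}f_{-}\cdot X^{j,c}$ is a continuous local martingale, and $W\ast(\mu-\nu)$ is, by the discussion preceding the lemma, the unique purely discontinuous locally square integrable local martingale with jumps $W(t,\Delta X_t)$; together these give the right-hand side of \eqref{eq:rep.it.for}. Everything else, namely the time-derivative integral, the second-order bracket term, the drift $\sum_j\partial_{x_j}f_{-}\cdot A^j$, and the compensator term $W\ast\nu$, is predictable and of finite variation. Since $Y$ is a local martingale, the uniqueness of the canonical decomposition of the special semimartingale $Y$ forces this entire predictable finite-variation part to be identically zero, which both justifies discarding it and is the mechanism that produces the stated representation.

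The main obstacle, and the point requiring the most care, will be the integrability bookkeeping: verifying that $\partial_{x_j}f_{-}\in\Lrm^2_{\mathrm{loc}}(X^{j,c})$ (so that the continuous integrals exist as locally square integrable martingales) and that $W\in\Gscr^2_{\mathrm{loc}}(\mu)$ (so that $W\ast(\mu-\nu)$ is well defined in the sense of the paragraph preceding the lemma). These do not follow from the raw It\^o formula alone; rather, they should be extracted from the hypothesis that $Y$ is \emph{locally square integrable}. The clean argument is to localize by a sequence of stopping times reducing $Y$ and $X$ simultaneously, apply the decomposition on each stopped interval, and use the local square integrability of $Y$ together with orthogonality of continuous and purely discontinuous martingales to deduce the requisite square integrability of each component; the identity \eqref{eq:com.pb} then certifies that $W\ast(\mu-\nu)$ has the asserted bracket. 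A subtler point to handle carefully is that a priori the jump sum $W\ast\mu$ and the drift corrections need not be separately of locally integrable variation even though their specific combination is; the correct route is therefore to form the compensated integral $W\ast(\mu-\nu)$ directly (as the purely discontinuous martingale with the prescribed jumps) rather than by naively compensating $W\ast\mu$, and to absorb any residual predictable terms into the finite-variation part that vanishes by the local-martingale property of $Y$.
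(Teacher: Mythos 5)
Your proposal is correct and follows essentially the same route as the paper: It\^o's formula applied to $(\mathrm{Id},X^1,\dots,X^n)$, direct construction of $W\ast(\mu-\nu)$ as the purely discontinuous local martingale with jumps $\Delta Y$ (with membership in $\Gscr^2_{\mathrm{loc}}(\mu)$ extracted from the local square integrability of $Y$), and cancellation of the predictable finite-variation remainder via the local-martingale property of $Y$. The only cosmetic difference is that the paper first works with $U=W-\sum_j\partial_{x_j}f_- x_j$, the jump of $Y-\sum_j\partial_{x_j}f_-\cdot M^j$, identifies $U\ast\mu+A$ with $U\ast(\mu-\nu)$ by noting that a finite-variation local martingale is purely discontinuous and hence determined by its jumps, and only afterwards converts to $W$ via $M^{j,d}=x_j\ast(\mu-\nu)$ --- which is precisely the ``form the compensated integral directly rather than naively compensating $W\ast\mu$'' fix you flag at the end.
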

\begin{proof}
We denote by $\mathrm{Id}$ the identity on $[0,T]$. Applying It\^o formula to the $\Rbb\p{n+1}$-valued semimartingale $(\mathrm{Id},X\p1,\ldots,X\p n)$ (cf.\ \cite[Theorem I.4.57]{JS00}), from the canonical decomposition \eqref{eq:sp.smart.dec}, we deduce
\begin{equation}\label{eq:it.for.lem}
\begin{split}
Y=Y_0+&\sum_{j=1}\p n\partial_{x_j} f_-\cdot M\p j+\partial_t f_-\cdot \textnormal{Id}+\sum_{j=1}\p n\partial_{x_j} f_-\cdot A\p j+\frac12\sum_{j,k=1}\p n\partial_{x_jx_k}\p2 f_-\cdot\aPP{X\p {j,c}}{X\p {k,c}}\\&+\sum_{s\leq\cdot}\Delta Y_{s}-\sum_{j=1}\p n\partial_{x_j} f_{s-}\Delta X\p j_s,
\end{split}
\end{equation}
where $\partial\p2_{\cdot\cdot}$ denotes the second-partial-derivative operator.
We define $N:=\sum_{j=1}\p n\partial_{x_j} f_-\cdot M\p j$,
\[
A:=\partial_t f_-\cdot \textnormal{Id}+\sum_{j=1}\p n\partial_{x_j} f_-\cdot A\p j+\frac12\sum_{j,k=1}\p n\partial_{x_jx_k}\p2 f_-\cdot\aPP{X\p {j,c}}{X\p {k,c}}
\]
and finally
\[
\begin{split}
U(t,\om,x_1,x_2)&:=\\&f(t,x_1+X\p1_{t-}(\om),\ldots,x_n+X\p n_{t-}(\om))-f(t,X\p1_{t-}(\om),\ldots,X\p n_{t-}(\om))-\sum_{j=1}\p n\partial_{x_j} f_{t-} x\p j.
\end{split}
\]
The process $U$ belongs to $\Gscr\p2_\mathrm{loc}(\mu)$. Indeed, because of \eqref{eq:it.for.lem} and the continuity of $A$, $U$ is the jump process of the locally square integrable local martingale $Y-N$. Hence $\sum_{s\leq\cdot}U_s\p2\leq[Y-N,Y-N]$ and the right-hand side of this estimate is locally integrable because of \cite[Proposition I.4.50(b)]{JS00}. This means that we can define the locally square integrable local martingale $U\ast(\mu-\nu)$. The process $\sum_{s\leq t}U(s,\om,\Delta X\p1_s(\om),\ldots\Delta X\p n_s(\om))+A_t(\om)=U\ast\mu_t(\om)+A_t(\om)$ is of finite variation and by \eqref{eq:it.for.lem} it is a locally square integrable local martingale. Hence it is purely discontinuous and has the same jumps as $U\ast(\mu-\nu)$. From \cite[Corollary I.4.19]{JS00}, these two local martingales are indistinguishable. Hence \eqref{eq:it.for.lem} becomes
\[
Y=Y_0+\sum_{j=1}\p n\partial_{x_j} f_-\cdot M\p j+U\ast(\mu-\nu).
\]
From \cite[Theorem 11.24]{HWY92}, we can write $M\p {j,d}=x_j\ast(\mu-\nu)$, $M\p {j,d}$ denoting the purely discontinuous part of the local martingales $M\p j$, $j=1,\ldots,n$. Using that $M\p j=X\p{ j,c}+M\p{ j,d}$ and \cite[Proposition II.1.30(b)]{JS00}, we get by linearity of the involved stochastic integrals
\[
Y=Y_0+\sum_{j=1}\p n\partial_{x_j} f_-\cdot X\p{ j,c}+\left(U+\sum_{j=1}\p n\partial_{x_j} f_-x_j\right)\ast(\mu-\nu),
\]
$j=1,\ldots,n$, which is the desired result.
\end{proof}
Notice that \eqref{eq:rep.it.for} is the decomposition of $Y$ in its continuous and purely discontinuous martingale part $Y\p c$ and $Y\p d$ respectively. It can be rephrased saying that every locally square integrable local martingale $Y$ such that $Y_t=f(t,X_t\p1,\ldots,X_t\p n)$ can be represented as the sum of stochastic integrals with respect to the continuous martingale parts of $X$ and a stochastic integral with respect to the compensated jump measure of  $X$.

Applying Lemma \ref{lem:rep.it.for} to the locally square integrable local martingale $S=\rme\p{X\p1}$, we get
\begin{equation}\label{eq:exp.S}
S=S_0+S_-\cdot X\p{1,c}+\big(S_-(\rme\p{x_1}-1)\big)\ast(\mu-\nu).
\end{equation}
We observe that the processes $S$ and $S_-$ are strictly positive. Hence the process $S\p{-1}_-$ is locally bounded and $S\p{-1}_-\cdot S$ is a locally square integrable local martingale. Furthermore $\Delta \big[S\p{-1}_-\cdot S\big]=(\rme\p{\Delta X\p1}-1)$. Therefore the function $x_1\mapsto(\rme\p {x_1}-1)$ belongs to $\Gscr\p2_\textnormal{loc}(\mu)$ and we can define 

\begin{equation}\label{eq:def.R}
\widetilde S\p d:=g\ast(\mu-\nu),\qquad g(x_1):=(\rme\p {x_1}-1),
\end{equation} 
which is a locally square integrable local martingale. Combining this with \eqref{eq:exp.S} and \cite[Proposition II.1.30(b)]{JS00}, we get
\begin{equation}\label{eq:exp.S.st.ex}
S=S_0+S_-\cdot \widetilde S,\qquad \widetilde S:=(X\p{1, c}+\widetilde S\p d).
\end{equation}
Notice that $\widetilde S$ is the s\emph{tochastic logarithm} of $S$ (cf.\ \cite[Theorem II.8.10]{JS00}). Moreover, if $Y$ is as in Lemma \ref{lem:rep.it.for}, from \eqref{eq:exp.S.st.ex} and \eqref{eq:com.pb}, we have
\begin{equation}\label{eq:pb.Y.S}
\aPP{S}{Y}=\sum_{j=1}\p n(S_-\partial_{x_j} f_-)\cdot\aPP{X\p {j,c}}{X\p{1, c}}+\int_{[0,\cdot]\times E}(\rme\p{x_1}-1)S_{s-}W(s,x)\nu(\rmd s,\rmd x).
\end{equation}

The next result is our main result on the predictable covariation of the GKW-residuals under the factor-model assumption \eqref{eq:Y_factor}.

\begin{theorem}\label{thm:poi.br.res.smar}
Let $X=(X\p1,\ldots,X\p n)$ be a locally square integrable semimartingale. Let $Y\p1$ and $Y\p2$ be locally square integrable local martingales such that $Y\p i_t=f\p i(t,X\p1_t,\ldots,X\p n_t)$, where $f\p i$ satisfies the assumptions of Lemma \ref{lem:rep.it.for}, and let $Y\p i=Y\p i_0+\vartheta\p i\cdot S+L\p i$ be the GKW decomposition of $Y\p i$ with respect to $S$, $i=1,2$. Then the explicit form of the integrand $\vartheta\p i$ is
\begin{equation}\label{eq:var.opt.str}
\vartheta_t\p i=\frac{1}{S_{t-}}\left(\sum_{j=1}\p n\partial_{x_j}f\p i_{t-}\frac{\rmd\aPP{X\p{1,c}}{X\p{ j,c}}_t}{\rmd\aPP{\widetilde S}{\widetilde S}_t}+\frac{\rmd\aPP{\widetilde S\p d}{Y\p {i,d}}_t}{\rmd\aPP{\widetilde S}{\widetilde S}_t}\right),\qquad i=1,2.
\end{equation}
Furthermore the differential of the predictable covariation of the residuals $L\p 1$ and $L\p 2$ in the GKW decomposition of $Y\p1$ and $Y\p2$ with respect to $S$ is
\begin{equation}\label{eq:res.pb.exp}
\begin{split}
\rmd \aPP{L\p 1}{L\p 2}_t&=\rmd\aPP{Y\p1}{Y\p2}_t-\vartheta\p 1_t\vartheta\p 2_t\rmd\aPP{S}{S}_t
\\&=
\sum_{j=1}\p n\sum_{k=1}\p n\partial_{x_j}f\p1_{t-}\partial_{x_k}f\p2_{t-}\left(\rmd\aPP{X\p{j, c}}{X\p{k, c}}_t-\frac{\rmd\aPP{X\p{1, c}}{X\p{j, c}}_t}{\rmd\aPP{\widetilde S}{\widetilde S}_t}\rmd\aPP{X\p{1, c}}{X\p{k, c}}_t\right)
\\&-
\sum_{\begin{subarray}{c}i,\ell=1,\\ i\neq\ell\end{subarray}}\p2\sum_{j=1}\p n\partial_{x_j}f\p i_{t-}\frac{\rmd\aPP{\widetilde S\p d}{Y\p{\ell,d}}_t}{\rmd\aPP{\widetilde S}{\widetilde S}_t}\rmd\aPP{X\p{1, c}}{X\p{j, c}}_t
\\&+
\rmd\aPP{Y\p {1, d}}{Y\p{ 2, d}}_t-\frac{\rmd\aPP{\widetilde S\p d}{Y\p {1, d}}_t}{\rmd\aPP{\widetilde S}{\widetilde S}_t}\rmd\aPP{\widetilde S\p d}{Y\p {2, d}}_t
\end{split}
\end{equation}
where,
\begin{eqnarray}
\aPP{\widetilde S}{\widetilde S}&=&\aPP{X\p{1, c}}{X\p{1, c}}+\int_{[0,\cdot]\times E}(\rme\p{x_1}-1)\p 2\nu(\rmd s,\rmd x),\label{eq:pb.St.St}
\\
\aPP{\widetilde S\p d}{\widetilde S\p d}&=&\int_{[0,\cdot]\times E}(\rme\p{x_1}-1)\p 2\nu(\rmd s,\rmd x),\label{eq:pb.R.R}\\
\aPP{\widetilde S\p d}{Y\p {i, d}}&=&\int_{[0,\cdot]\times E}(\rme\p{x_1}-1)W\p i(s,x)\nu(\rmd s,\rmd x)\label{eq:pb.R.Yi},\\
 \aPP{Y\p {1, d}}{Y\p {2, d}}&=&\int_{[0,\cdot]\times E}W\p 1(s,x)W\p 2(s,x)\nu(\rmd s,\rmd x) \label{eq:pb.Y1.Y2}.
\end{eqnarray}
and $W\p i$ is given by \eqref{eq:proc.W} with $f=f\p i$, $i=1,2$. 
\end{theorem}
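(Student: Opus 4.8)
The plan is to reduce everything to the Radon--Nikodym characterisation \eqref{eq:gkw.dec.int} of the GKW integrand together with the residual formula \eqref{eq:pb.res}, and then to substitute the explicit representations of $S$ and $Y\p i$ already obtained. First I would record the four auxiliary identities \eqref{eq:pb.St.St}--\eqref{eq:pb.Y1.Y2}, which are immediate. Since $\wt S=X\p{1,c}+\wt S\p d$ with $\wt S\p d=g\ast(\mu-\nu)$ and $g(x_1)=\rme\p{x_1}-1$, the orthogonality of the continuous and purely discontinuous martingale parts gives $\aPP{\wt S}{\wt S}=\aPP{X\p{1,c}}{X\p{1,c}}+\aPP{\wt S\p d}{\wt S\p d}$, which together with \eqref{eq:com.pb} yields \eqref{eq:pb.St.St}; each of \eqref{eq:pb.R.R}, \eqref{eq:pb.R.Yi} and \eqref{eq:pb.Y1.Y2} is then a direct application of \eqref{eq:com.pb} to the integrands $g$, $W\p i$ and their products, using that by Lemma \ref{lem:rep.it.for} the purely discontinuous part of $Y\p i$ is $Y\p{i,d}=W\p i\ast(\mu-\nu)$.

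For the integrand \eqref{eq:var.opt.str} I would start from $\vartheta\p i=\rmd\aPP{S}{Y\p i}/\rmd\aPP{S}{S}$. From \eqref{eq:exp.S.st.ex} one has $S=S_0+S_-\cdot\wt S$, hence $\aPP{S}{S}=S_-\p2\cdot\aPP{\wt S}{\wt S}$ and $\rmd\aPP{S}{S}_t=S_{t-}\p2\,\rmd\aPP{\wt S}{\wt S}_t$. For the numerator I would rewrite \eqref{eq:pb.Y.S}, recognising the jump integral there as $S_-\cdot\aPP{\wt S\p d}{Y\p{i,d}}$ by \eqref{eq:pb.R.Yi}, so that $\rmd\aPP{S}{Y\p i}_t=S_{t-}\big(\sum_{j=1}\p n\partial_{x_j}f\p i_{t-}\,\rmd\aPP{X\p{1,c}}{X\p{j,c}}_t+\rmd\aPP{\wt S\p d}{Y\p{i,d}}_t\big)$. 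Dividing numerator by denominator, one factor of $S_{t-}$ cancels and one is left with exactly \eqref{eq:var.opt.str}.

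The covariation \eqref{eq:res.pb.exp} is then bookkeeping starting from \eqref{eq:pb.res}, $\rmd\aPP{L\p1}{L\p2}_t=\rmd\aPP{Y\p1}{Y\p2}_t-\vartheta\p1_t\vartheta\p2_t\,\rmd\aPP{S}{S}_t$. I would split $\aPP{Y\p1}{Y\p2}$ into its continuous and purely discontinuous contributions via the decomposition \eqref{eq:rep.it.for}: the continuous parts $Y\p{i,c}=\sum_j\partial_{x_j}f\p i_-\cdot X\p{j,c}$ contribute $\sum_{j,k}\partial_{x_j}f\p1_-\partial_{x_k}f\p2_-\cdot\aPP{X\p{j,c}}{X\p{k,c}}$, the discontinuous parts contribute \eqref{eq:pb.Y1.Y2}, and the cross terms vanish by orthogonality. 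Substituting the explicit $\vartheta\p i$ and using $\rmd\aPP{S}{S}_t=S_{t-}\p2\,\rmd\aPP{\wt S}{\wt S}_t$, the $S_{t-}$ factors cancel, so that $\vartheta\p1_t\vartheta\p2_t\,\rmd\aPP{S}{S}_t$ equals the product of the two bracketed factors in \eqref{eq:var.opt.str} multiplied by $\rmd\aPP{\wt S}{\wt S}_t$. Expanding this product into its four pieces (continuous--continuous, the two mixed pieces, and discontinuous--discontinuous) and subtracting term by term from $\rmd\aPP{Y\p1}{Y\p2}_t$ reproduces the four lines of \eqref{eq:res.pb.exp}, the two mixed continuous--discontinuous pieces combining into the antisymmetric sum over $i\neq\ell$.

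The only genuine subtlety, and the step I would treat most carefully, is the meaning of the quotients $\rmd\aPP{\cdot}{\cdot}/\rmd\aPP{\wt S}{\wt S}$: all predictable covariations involved should first be written as densities with respect to a single increasing predictable process $B$ dominating them (for instance $B=\aPP{\wt S}{\wt S}+\sum_{j=1}\p n\aPP{X\p{j,c}}{X\p{j,c}}+\aPP{Y\p{1,d}}{Y\p{1,d}}+\aPP{Y\p{2,d}}{Y\p{2,d}}$), the Kunita--Watanabe inequality guaranteeing absolute continuity of every mixed term with respect to $B$. On the predictable set where $\rmd\aPP{\wt S}{\wt S}=0$ one also has $\rmd\aPP{S}{S}=0$, so $\vartheta\p i$ may be taken to vanish there and every term of \eqref{eq:res.pb.exp} containing a division by $\rmd\aPP{\wt S}{\wt S}$ drops out; elsewhere the quotients are honest Radon--Nikodym derivatives and the manipulations above are literal. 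With this convention all the claimed identities are equalities of $\rmd B$-densities, hence of the associated predictable finite-variation processes.
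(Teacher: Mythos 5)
Your proposal is correct and follows essentially the same route as the paper's proof: identities \eqref{eq:pb.St.St}--\eqref{eq:pb.Y1.Y2} via \eqref{eq:com.pb} and the orthogonality of continuous and purely discontinuous parts, then $\vartheta\p i=\rmd\aPP{S}{Y\p i}/\rmd\aPP{S}{S}$ with $\aPP{S}{S}=S_-\p2\cdot\aPP{\widetilde S}{\widetilde S}$ and \eqref{eq:pb.Y.S}, and finally the term-by-term expansion of $\vartheta\p1\vartheta\p2\,\rmd\aPP{S}{S}$ subtracted from $\rmd\aPP{Y\p1}{Y\p2}$. Your closing paragraph on interpreting the quotients as Radon--Nikodym densities with respect to a common dominating predictable increasing process is a welcome extra precision that the paper leaves implicit.
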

\begin{proof}
We first show \eqref{eq:var.opt.str}. From \eqref{eq:gkw.dec.int}, we have
\begin{equation}\label{eq:thti}
\vartheta\p i=\frac{\rmd\aPP{S}{Y\p i}_t}{\rmd\aPP{S}{S}_t}.
\end{equation}
From \eqref{eq:exp.S.st.ex}, $\aPP{S}{S}=S_-\p2\cdot\aPP{\widetilde S}{\widetilde S}$. To compute $\aPP{\widetilde S}{\widetilde S}$, we use \eqref{eq:def.R}, \eqref{eq:exp.S.st.ex} and \eqref{eq:com.pb}. This, in particular shows \eqref{eq:pb.R.R} and \eqref{eq:pb.St.St}. The process $\aPP{S}{Y\p i}$ is given by \eqref{eq:pb.Y.S} with $Y=Y\p i$  and $W=W\p i$. Inserting this expression for $\aPP{S}{Y\p i}$ and the previous one for $\aPP{S}{S}$ in \eqref{eq:thti}, yields \eqref{eq:var.opt.str}. To compute $\aPP{Y\p1}{Y\p 2}$ we again use Lemma \ref{lem:rep.it.for} and \eqref{eq:com.pb}: The computations are similar to those for the computation of $\vartheta\p i$. The proof of \eqref{eq:res.pb.exp} is straightforward, once $\aPP{Y\p1}{Y\p 2}$, $\aPP{S}{S}$, $\vartheta\p1$ and $\vartheta\p 2$ are known. The proof of the theorem is now complete.
\end{proof}

If the semimartingale $X$ is continuous, then the formulas in Theorem \ref{thm:poi.br.res.smar} become simpler. Indeed, in this case, all purely discontinuous martingales appearing in \eqref{eq:res.pb.exp} vanish and the following corollary holds:
\begin{corollary}\label{cor:poi.br.res.smar.con}
Let the assumptions of Theorem \ref{thm:poi.br.res.smar} be satisfied and furthermore assume that the semimartingale $X$ is continuous. Then $\aPP{\widetilde S}{\widetilde S}=\aPP{X\p{1, c}}{X\p{1, c}}$ and \eqref{eq:var.opt.str} becomes

\begin{equation}\label{eq:var.opt.str.con}
\vartheta_t\p i=\frac{1}{S_{t}}\sum_{j=1}\p n\partial_{x_j}f\p i_{t}\frac{\rmd\aPP{X\p{1,c}}{X\p{ j,c}}_t}{\rmd\aPP{X\p{1, c}}{X\p{1, c}}_t}
\end{equation}
while \eqref{eq:res.pb.exp} reduces to
\begin{equation}\label{eq:res.pb.exp.con}
\rmd \aPP{L\p1}{L\p2}_t=\sum_{j=2}\p n\sum_{k=2}\p n\partial_{x_j}f\p1_{t}\partial_{x_k}f\p2_{t}\left(\rmd\aPP{X\p{j, c}}{X\p{k, c}}_t-\frac{\rmd\aPP{X\p{1, c}}{X\p{j, c}}_t}{\rmd\aPP{X\p{1, c}}{X\p{1, c}}_t}\rmd\aPP{X\p{1, c}}{X\p{k, c}}_t\right).
\end{equation}
\end{corollary}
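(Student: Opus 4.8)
The plan is to specialise Theorem~\ref{thm:poi.br.res.smar} to continuous $X$ by observing that path-continuity forces every purely discontinuous martingale in the general formulas to vanish. Since $X$ has continuous paths, $\Delta X_s=0$ for all $s$, so its jump measure $\mu$ is the zero measure and hence so is its compensator $\nu$. By \eqref{eq:def.R} this gives $\widetilde S\p d=g\ast(\mu-\nu)=0$, and by Lemma~\ref{lem:rep.it.for} likewise $Y\p{i,d}=W\p i\ast(\mu-\nu)=0$ for $i=1,2$. Continuity of $S$ and of the factor process additionally yield $S_{t-}=S_t$ and $\partial_{x_j}f\p i_{t-}=\partial_{x_j}f\p i_t$.

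First I would record $\aPP{\widetilde S}{\widetilde S}=\aPP{X\p{1,c}}{X\p{1,c}}$, which follows at once from \eqref{eq:pb.St.St} since the term $\int_{[0,\cdot]\times E}(\rme\p{x_1}-1)\p2\nu(\rmd s,\rmd x)$ vanishes when $\nu=0$ (equivalently, $\widetilde S=X\p{1,c}$ directly from \eqref{eq:exp.S.st.ex}). Substituting this, together with $\aPP{\widetilde S\p d}{Y\p{i,d}}=0$ from \eqref{eq:pb.R.Yi} and the continuity replacements, into \eqref{eq:var.opt.str} removes the second summand in the bracket and produces \eqref{eq:var.opt.str.con}.

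For the residual covariation I would track which terms of \eqref{eq:res.pb.exp} survive. The second line, carrying the factor $\aPP{\widetilde S\p d}{Y\p{\ell,d}}$, vanishes by \eqref{eq:pb.R.Yi}; the fourth line, built from $\aPP{Y\p{1,d}}{Y\p{2,d}}$ and $\aPP{\widetilde S\p d}{Y\p{i,d}}$, vanishes by \eqref{eq:pb.Y1.Y2} and \eqref{eq:pb.R.Yi}. Only the first double sum remains, now over $j,k=1,\ldots,n$ with $\aPP{\widetilde S}{\widetilde S}$ replaced by $\aPP{X\p{1,c}}{X\p{1,c}}$ and the left limits dropped. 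The single nonmechanical point is the reduction of the summation range from $1$ to $2$: for $j=1$ the bracket becomes
\[
\rmd\aPP{X\p{1,c}}{X\p{k,c}}_t-\f{\rmd\aPP{X\p{1,c}}{X\p{1,c}}_t}{\rmd\aPP{X\p{1,c}}{X\p{1,c}}_t}\,\rmd\aPP{X\p{1,c}}{X\p{k,c}}_t=0,
\]
and symmetrically for $k=1$. Thus all terms with $j=1$ or $k=1$ drop out, the sum runs effectively over $j,k=2,\ldots,n$, and \eqref{eq:res.pb.exp.con} follows.

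Because each simplification is a direct consequence of $\mu=\nu=0$ and path-continuity, I expect no substantial obstacle; the argument is essentially bookkeeping, with the index-range collapse being the only observation that is not completely automatic.
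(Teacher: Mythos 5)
Your proposal is correct and matches the paper's (unwritten, one-line) argument: the paper likewise justifies the corollary solely by noting that continuity of $X$ kills all purely discontinuous parts, and your verification that the $j=1$ and $k=1$ terms cancel (by symmetry of $\aPP{\cdot}{\cdot}$) is exactly the point the paper flags when it remarks that the sums start at $j=2$, $k=2$.
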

Notice that the summations in \eqref{eq:res.pb.exp.con} start from $j=2$ and $k=2$.

We now denote by $(B,C,\nu)$ the semimartingale characteristics of $X$ (cf.\ \cite[II.\S2a]{JS00}) with respect to the truncation function $h=(h\p1,\ldots,h\p n)$, where $h\p j(x_j)=x_j1_{\{|x_j|\leq1\}}$, $j=1,\ldots,n$, and assume that they are absolutely continuous with respect to the Lebesgue measure, that is
\begin{equation}\label{eq:sm.ch.ac}
B_t=\int_0\p tb_s\rmd s,\quad C_t=\int_0\p tc_s\rmd s,\quad \nu(\om,\rmd t,\rmd x)=K_t(\om,\rmd x)\rmd t
\end{equation}  
where $b$ and $c$ are predictable processes taking values in $\Rbb\p n$ and in the subspace of symmetric matrices in $\Rbb\p{n\times n}$ respectively, while $K_t(\om,\rmd x)$ is a predictable kernel as in \cite[Proposition II.2.9]{JS00}. We stress that for every $t$ and $\om$, $K_t(\om,\rmd x)$ is a measure on $(\Rbb\p n,\Bscr(\Rbb\p n))$. Furthermore, we have
\[
c\p{jk}_t=\rmd\aPP{X\p{j, c}}{X\p{k, c}}_t, \quad j,k=1,\ldots,n.
\]
The triplet $(b,c,K)$ is sometimes called \emph{differential characteristics} of the semimartingale $X$.
In this context we get the following corollary to Theorem \ref{thm:poi.br.res.smar}:
\begin{corollary}\label{cor:poi.br.res.smar.abs.con}
Let the assumptions of Theorem \ref{thm:poi.br.res.smar} hold and furthermore let the semimartingale characteristics $(B,C,\nu)$ of $X$ be absolutely continuous with respect to the Lebesgue measure on $[0,T]$ and let $(b,c,K)$ be the corresponding differential characteristics. 

\textnormal{(i)} Then $\vartheta\p i$, $i=1,2$, becomes
\begin{equation}\label{eq:var.opt.str.abs.con}
\vartheta_t\p i=\frac{1}{S_{t-}\xi_t}\left(\sum_{j=1}\p n\partial_{x_j}f\p i_{t-}c\p{1j}_t+\int_E(\rme\p{x_1}-1)W\p i(t,x)K_t(\rmd x)\right).
\end{equation}
while \eqref{eq:res.pb.exp} reduces to
\begin{equation}\label{eq:res.pb.exp.abs.co}
\begin{split}
\frac{\rmd \aPP{L\p 1}{L\p 2}_t}{\rmd t}&
=\sum_{j=1}\p n\sum_{k=1}\p n\partial_{x_j}f\p1_{t-}\partial_{x_k}f\p2_{t-}\left(c\p{jk}_t-\frac{c\p{1j}_t}{\xi_t}c\p{1k}_t\right)
\\&-
\frac1{\xi_t}\sum_{\begin{subarray}{c}i,l=1,\\ i\neq\ell\end{subarray}}\p2\sum_{j=1}\p n\partial_{x_j}f\p i_{t-}c\p{j,1}_t\int_E(\rme\p{x_1}-1)W\p \ell(t,x)K_t(\rmd x)
\\&+
\int_EW\p1(t,x)W\p2(t,x)K_t(\rmd x)
\\&-\frac1{\xi_t}\left(\int_E(\rme\p{x_1}-1)W\p1(t,x)K_t(\rmd x)\right)\left(\int_E(\rme\p{x_1}-1)W\p2(t,x)K_t(\rmd x)\right)
\end{split}
\end{equation}
 where, $\xi_t:=c_t\p{11}+\int_E(\rme\p{x_1}-1)\p2K_t(\rmd x)$, $t\in[0,T]$.

\textnormal{(ii)} If furthermore $X$ is continuous then
\begin{equation}\label{eq:var.opt.str.abs.con.con}
\vartheta_t\p i=\frac{1}{S_{t}c\p{11}_t}\sum_{j=1}\p n\partial_{x_j}f\p i_{t}c\p{1j}_t
\end{equation}
and
\begin{equation}\label{eq:res.pb.exp.abs.con}
\rmd \aPP{L\p 1}{L\p 2}_t
=\sum_{j=2}\p n\sum_{k=2}\p n\partial_{x_j}f\p1_{t}\partial_{x_k}f\p2_{t}\left(c\p{jk}_t-\frac{c\p{1j}_t}{c\p{11}_t}c\p{1k}_t\right)\rmd t
\end{equation}
\end{corollary}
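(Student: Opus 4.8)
Both parts are pure substitutions of the absolute-continuity data \eqref{eq:sm.ch.ac} into the already-established formulas of Theorem~\ref{thm:poi.br.res.smar} (and, for part (ii), Corollary~\ref{cor:poi.br.res.smar.con}); no new stochastic analysis is required. The idea is to rewrite every predictable covariation appearing in \eqref{eq:var.opt.str} and \eqref{eq:res.pb.exp} as an explicit $\rmd t$-density and then to observe that the $\rmd t$-factors cancel in the Radon--Nikodym quotients. Concretely, I would first translate the covariations: from $c^{jk}_t\,\rmd t=\rmd\aPP{X^{j,c}}{X^{k,c}}_t$ and $\nu(\rmd s,\rmd x)=K_t(\rmd x)\,\rmd t$ one turns \eqref{eq:pb.St.St}--\eqref{eq:pb.Y1.Y2} into densities, namely $\rmd\aPP{\widetilde S}{\widetilde S}_t=\big(c^{11}_t+\int_E(\rme^{x_1}-1)^2K_t(\rmd x)\big)\rmd t=\xi_t\,\rmd t$, $\rmd\aPP{\widetilde S^d}{Y^{i,d}}_t=\big(\int_E(\rme^{x_1}-1)W^i(t,x)K_t(\rmd x)\big)\rmd t$, and $\rmd\aPP{Y^{1,d}}{Y^{2,d}}_t=\big(\int_E W^1W^2K_t(\rmd x)\big)\rmd t$. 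Here it is worth recording that, since $X^{1,c}$ and $\widetilde S^d$ are orthogonal, $\aPP{\widetilde S}{\widetilde S}=\aPP{X^{1,c}}{X^{1,c}}+\aPP{\widetilde S^d}{\widetilde S^d}$, so the density $\xi_t$ is exactly the sum of the two contributions and matches the stated definition.

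To assemble part (i), I would insert these densities into \eqref{eq:var.opt.str}: the quotient $\rmd\aPP{X^{1,c}}{X^{j,c}}_t/\rmd\aPP{\widetilde S}{\widetilde S}_t$ becomes $c^{1j}_t/\xi_t$ and $\rmd\aPP{\widetilde S^d}{Y^{i,d}}_t/\rmd\aPP{\widetilde S}{\widetilde S}_t$ becomes $\xi_t^{-1}\int_E(\rme^{x_1}-1)W^i(t,x)K_t(\rmd x)$; collecting the $1/(S_{t-}\xi_t)$ prefactor yields \eqref{eq:var.opt.str.abs.con}. The identical substitution in each of the four lines of \eqref{eq:res.pb.exp}, followed by dividing through by $\rmd t$ and using the symmetry $c^{1j}_t=c^{j,1}_t$ in the cross term, produces the four lines of \eqref{eq:res.pb.exp.abs.co}.

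For part (ii), if $X$ is continuous then $\nu\equiv0$, hence $K_t=0$, $\widetilde S^d=0$ and $\xi_t=c^{11}_t$; all the jump integrals of part (i) drop out and \eqref{eq:var.opt.str.abs.con}, \eqref{eq:res.pb.exp.abs.co} collapse to \eqref{eq:var.opt.str.abs.con.con}, \eqref{eq:res.pb.exp.abs.con}. Equivalently, and more directly, one substitutes $\rmd\aPP{X^{j,c}}{X^{k,c}}_t=c^{jk}_t\,\rmd t$ into the already-reduced continuous formulas \eqref{eq:var.opt.str.con} and \eqref{eq:res.pb.exp.con} of Corollary~\ref{cor:poi.br.res.smar.con}. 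I would note that the $j=1$ and $k=1$ summands of \eqref{eq:res.pb.exp.con} vanish identically (for $j=1$ one gets $c^{1k}-(c^{11}/c^{11})c^{1k}=0$), which is precisely why the double sum in \eqref{eq:res.pb.exp.abs.con} may be started at $j,k=2$.

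The computation itself is routine; the only point that needs care is the legitimacy of replacing ratios of covariation differentials by ratios of $\rmd t$-densities. This is justified because, under \eqref{eq:sm.ch.ac}, the predictable covariation $\aPP{\widetilde S}{\widetilde S}$ is absolutely continuous with respect to $\rmd t$ (both its continuous and its jump part are), so the Radon--Nikodym derivatives in \eqref{eq:var.opt.str} and \eqref{eq:res.pb.exp} are genuine quotients of the corresponding densities $\rmd t$-almost everywhere; on the $\rmd t$-null set where $\xi_t=0$ the quotients are interpreted by the usual convention and do not affect the integrals.
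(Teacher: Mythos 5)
Your proof is correct and follows exactly the route the paper intends: the corollary is stated without proof as an immediate substitution of the differential characteristics $(b,c,K)$ into Theorem~\ref{thm:poi.br.res.smar} (and Corollary~\ref{cor:poi.br.res.smar.con} for the continuous case), which is precisely what you carry out, including the useful checks that $\xi_t$ is the density of $\aPP{\widetilde S}{\widetilde S}$ and that the $j=1$ and $k=1$ summands vanish so the sums in \eqref{eq:res.pb.exp.abs.con} may start at $2$. Nothing is missing.
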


\section{Models with Fourier representation}\label{sec:GKW.eu}
In this section we combine variance-optimal hedging with Fourier methods. We do not assume a special stochastic volatility model, as in e.\ g.\  \cite{KalP07} where affine stochastic volatility models were considered. We rather work in a general multidimensional factor-model setting. This requires some technical considerations about stochastic processes depending on a parameter, which we discuss in Subsection \ref{subsec:par.proc}. In Subsection \ref{subsec:four.met} we consider contingent claims whose pay-offs have a Fourier representation. As a special case, we discuss semimartingale stochastic volatility models and, in particular, affine models recovering some of the results of \cite{KalP07}.

As a preliminary, we discuss the notion of `variation' for complex-valued processes. If $C=C\p 1+\rmi C\p 2$ is a complex-valued process and $C\p1$ and $C\p2$ are its real and imaginary part, respectively, we set 
\begin{equation}\label{eq:var.com.val.proc}
\Var(C):=\Var(C\p1)+\Var(C\p2)\,.
\end{equation}
In particular, from $|C|\leq|C\p1|+|C\p2|$, \eqref{eq:var.com.val.proc} yields $|C|\leq\Var(C)$.
Let $A$ be a real-valued process of finite variation and $K=K\p1+\rmi K\p2$ a measurable complex-valued process. Then \eqref{eq:var.com.val.proc} and \eqref
{eq:var.int} imply
\begin{equation}\label{eq:var.int.com}
\Var(K\cdot A)=(|K\p1|+|K\p2|)\cdot\Var(A).
\end{equation}

A complex-valued process $Z=X+\rmi Y$ is a square integrable martingale if the real-valued processes $X$ and $Y$ are square integrable martingales. For two complex-valued square integrable martingales $Z\p1=X\p1+\rmi Y\p1$ and $Z\p2=X\p2+\rmi Y\p2$ we define
\[
\aPP{Z\p1}{Z\p2}:=\big(\aPP{X\p1}{X\p2}-\aPP{Y\p1}{Y\p2}\big)+\rmi\big(\aPP{X\p1}{Y\p2}+\aPP{Y\p1}{X\p2}\big)
\] and the variation process $\Var(\aPP{Z\p1}{Z\p2})$ is given by \eqref{eq:var.com.val.proc}. Notice that $Z\p1Z\p2-\aPP{Z\p1}{Z\p2}$ is a martingale and $\aPP{Z\p1}{Z\p2}$ is the unique complex-valued predictable process of finite variation starting at zero with this property. Furthermore, because of Kunita--Watanabe inequality for real-valued martingales (cf.\ \cite[Corollary II.22, p25]{Me76}) we immediately get
\begin{equation}\label{eq:kwi.com}
\Ebb\big[\Var(\aPP{Z\p1}{Z\p2})_t\big]\leq2\Ebb\big[\aPP{Z\p1}{\overline Z\p1}_t\big]\p{1/2}\Ebb\big[\aPP{Z\p2}{\overline Z\p2}_t\big]\p{1/2}\,,
\end{equation} 
where $\overline Z\p j$ denotes the complex conjugate of $Z\p j$, $j=1,2$.

Notice that, if $Z\p1$ and $Z\p 2$ are complex-valued square integrable martingales, then the GKW decomposition of $Z\p2$ with respect to $Z\p1$ clearly holds but the residual is a complex-valued square integrable martingale and the integrand belongs to $\Lrm\p2_\Cbb(Z\p 1)$. Obviously also an analogous relation as \eqref{eq:gkw.dec.int} holds.

\subsection{Complex-Valued Processes Depending on a Parameter}\label{subsec:par.proc}
 We denote by $\Sscr$ the space of parameters and assume that it is a Borel subspace of $\Cbb\p n$ and $\Bscr(\Sscr)$ denotes the Borel $\sig$-algebra on $\Sscr$. On $(\Sscr,\Bscr(\Sscr))$ a finite complex measure $\zt$ is given. We recall that with $\zt$ we can associate the \emph{positive} measure $|\zt|$ called \emph{total variation} of $\zt$. Furthermore, there exists a complex-valued function $h$ such that $|h|=1$ and $\rmd\zt=h\,\rmd|\zt|$, so $L\p1(\zt)=L\p1(|\zt|)$. For details about complex-valued measures see \cite[\S6.1]{Ru87}. Note that Fubini's theorem also holds for products of complex-valued measures (cf.\ \cite[Theorem 8.10.3]{C13}).
 
\begin{definition}\label{def:joi.mea}
 Let $U(z)=(U(z)_t)_{t\in[0,T]}$ be a (complex-valued) stochastic process for every $z\in\Sscr$. By $U(\cdot)$ we denote the mapping $(t,\om,z)\mapsto U(\om,z)_t$.

\textnormal{(i)} We say that $U(\cdot)$ is jointly measurable if it is a $\Bscr([0,T])\otimes\Fscr_T\otimes\Bscr(\Sscr)$-measurable mapping. 

\textnormal{(ii)} We say that $U(\cdot)$ is \emph{jointly progressively measurable} if for every $s\leq t$ the mapping $(s,\om,z)\mapsto U(\om,z)_s$ is $\Bscr([0,t])\otimes\Fscr_t\otimes\Bscr(\Sscr)$-measurable. 

\textnormal{(iii)} We say that $U(\cdot)$ is \emph{jointly optional} if it is $\Oscr\otimes\Bscr(\Sscr)$-measurable, $\Oscr$ denoting the optional $\sig$-algebra on $[0,T]\times\Om$. 

\textnormal{(iv)} We say that $U(\cdot)$ is \emph{jointly predictable} if it is $\Pscr\otimes\Bscr(\Sscr)$-measurable, $\Pscr$ denoting the predictable $\sig$-algebra on $[0,T]\times\Om$.
\end{definition}

Let $U(z)$ be a stochastic process for every $z\in\Sscr$. If there exists a jointly measurable (or jointly progressively measurable, or jointly optional or jointly predictable) mapping $(t,\om,z)\mapsto Y(\om,z)_t$ such that, for every $z\in\Sscr$ the processes $U(z)$ and $Y(z)$ are indistinguishable, we identify them in notation, i.e.\@ $U(\cdot):=Y(\cdot)$. We shall use this convention without further mention.

\begin{proposition}\label{prop:poi.br.com}
Let $U(z)$ be a square integrable martingale for every $z\in\Sscr$ and let the estimate $\sup_{z\in\Sscr}\Ebb[|U(z)_T|\p2]<+\infty$ hold.

\textnormal{(i)} If $z\mapsto U(\om,z)_t$ is $\Bscr(\Sscr)$-measurable for every $t\in[0,T]$, the process $U=(U_t)_{t\in[0,T]}$,
\begin{equation}\label{eq:proc.U}
U_t:=\int_\Sscr U(z)_t\zt(\rmd  z),\quad t\in[0,T], 
\end{equation}
 is a square integrable martingale, provided that it is adapted.

\textnormal{(ii)} For every complex-valued square integrable martingale $Z$ such that $(t,\om,z)\mapsto\aPP{Z}{U(z)}_t(\om)$ is jointly measurable, the mapping $(t,\om,z)\mapsto \Var(\aPP{Z}{U(z)})_t(\om)$ is jointly measurable as well and
\begin{equation}\label{eq:est.var} 
\Ebb\left[\int_\Sscr \Var(\aPP{Z}{U(z)})_t|\zt|(\rmd z)\right]\leq2\Ebb\big[|Z_T|\p2\big]\p{1/2}\left(\sup_{z\in\Sscr} \Ebb\big[|U(z)_T|\p2\big]\right)\p{1/2}|\zt|(\Sscr)<+\infty.
\end{equation}

 If furthermore the process $D:=\int_\Sscr\aPP{Z}{U(z)}\zt(\rmd  z)$ is predictable, then
\begin{equation}\label{eq:com.ZU}
\aPP{Z}{U}=\int_\Sscr\aPP{Z}{U(z)}\zt(\rmd  z),
\end{equation}
whenever the process $U$ defined in \eqref{eq:proc.U} is a square integrable martingale.

\textnormal{(iii)} If there exists a complex-valued jointly progressively measurable process $K(\cdot)$ such that
\[
\aPP{Z}{U(z)}_t=\int_0\p t K(z)_s\rmd s\,,
\]
then the identity
\begin{equation}\label{eq:ch.in}
D=\int_0\p\cdot\int_\Sscr K(z)_s \zt(\rmd z)\rmd s
\end{equation}
holds. Hence $D$ is predictable and
\begin{equation}\label{eq:est.pb.ass.con.as}
\aPP{Z}{U}=\int_0\p \cdot\int_\Sscr K(z)_s\zt(\rmd z)\rmd s,
\end{equation}
whenever $U$ is well-defined and adapted.
\end{proposition}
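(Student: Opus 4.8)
For part (i) the plan is to establish square integrability and the martingale property separately, the common engine being Fubini's theorem (valid for the complex measure $\zt$, as recalled above). For $U_T\in L\p2(\Pbb)$ I would apply the Cauchy--Schwarz inequality on the finite measure $|\zt|$ to write $|U_t|\p2\le|\zt|(\Sscr)\int_\Sscr|U(z)_t|\p2\,|\zt|(\rmd z)$, take expectations, swap $\Ebb$ with $\int_\Sscr$ by Tonelli, and bound $\Ebb[|U(z)_t|\p2]\le\Ebb[|U(z)_T|\p2]$ by conditional Jensen; the uniform bound $\sup_z\Ebb[|U(z)_T|\p2]<\infty$ then yields $\Ebb[|U_t|\p2]\le|\zt|(\Sscr)\p2\sup_z\Ebb[|U(z)_T|\p2]<\infty$. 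For the martingale property I would fix $s\le t$ and $A\in\Fscr_s$, check $\Ebb[\int_\Sscr|U(z)_t|\,|\zt|(\rmd z)]<\infty$ (Cauchy--Schwarz plus the uniform bound) so that Fubini permits exchanging $\Ebb[\,\cdot\,1_A]$ with $\int_\Sscr\cdots\zt(\rmd z)$, and then use the martingale property of each $U(z)$ to conclude $\Ebb[U_t1_A]=\Ebb[U_s1_A]$. Adaptedness is assumed, so $U$ defined in \eqref{eq:proc.U} is a square integrable martingale.

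For part (ii) I would first treat the joint measurability of $\Var(\aPP{Z}{U(z)})$ and then the estimate \eqref{eq:est.var}. Writing the variation of the \cadlag finite-variation process $\aPP{Z}{U(z)}$ as a supremum of $\sum_i|\aPP{Z}{U(z)}_{t_{i+1}}-\aPP{Z}{U(z)}_{t_i}|$ over partitions, and restricting by right-continuity to partitions whose interior points lie in $\Qbb\cap[0,T]$ with right endpoint $t$, expresses $(t,\om,z)\mapsto\Var(\aPP{Z}{U(z)})_t(\om)$ as a countable supremum of jointly measurable maps, hence jointly measurable. For the bound I would feed $Z\p1:=Z$, $Z\p2:=U(z)$ into the complex Kunita--Watanabe inequality \eqref{eq:kwi.com}, noting $\Ebb[\aPP{Z}{\overline Z}_t]=\Ebb[\aPP{\Rea Z}{\Rea Z}_t+\aPP{\Ima Z}{\Ima Z}_t]\le\Ebb[|Z_T|\p2]$ and likewise for $U(z)$, which gives $\Ebb[\Var(\aPP{Z}{U(z)})_t]\le2\Ebb[|Z_T|\p2]\p{1/2}\Ebb[|U(z)_T|\p2]\p{1/2}$; integrating against $|\zt|$ and swapping by Tonelli produces \eqref{eq:est.var}.

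For the identity \eqref{eq:com.ZU} I would invoke the uniqueness characterization of the complex predictable covariation: it suffices to show that $D$ is a predictable finite-variation process with $D_0=0$ such that $ZU-D$ is a martingale. Predictability is the standing hypothesis, $D_0=0$ is clear, and finite variation follows from $\Var(D)_t\le\int_\Sscr\Var(\aPP{Z}{U(z)})_t\,|\zt|(\rmd z)<\infty$ a.s., the increments of $D$ being dominated via the convention \eqref{eq:var.com.val.proc}. Since $Z_t$ does not depend on $z$, I would pull it inside the integral, $Z_tU_t-D_t=\int_\Sscr(Z_tU(z)_t-\aPP{Z}{U(z)}_t)\,\zt(\rmd z)=\int_\Sscr M(z)_t\,\zt(\rmd z)$, where each $M(z):=ZU(z)-\aPP{Z}{U(z)}$ is a martingale. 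Exactly as in part (i)---with the uniform $L\p1$ bound now supplied by Cauchy--Schwarz for the $ZU(z)$ terms and by \eqref{eq:est.var} for the $\aPP{Z}{U(z)}$ terms---Fubini yields $\Ebb[(Z_tU_t-D_t)1_A]=\Ebb[(Z_sU_s-D_s)1_A]$, so $ZU-D$ is a martingale and $\aPP{Z}{U}=D$.

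Finally, part (iii) reduces to a time-and-parameter Fubini followed by part (ii). Since $\aPP{Z}{U(z)}_t=\int_0\p tK(z)_s\,\rmd s$, formula \eqref{eq:var.int} gives $\Var(\aPP{Z}{U(z)})_t\ge\int_0\p t|K(z)_s|\,\rmd s$, so \eqref{eq:est.var} furnishes the a.s.\ integrability $\int_\Sscr\int_0\p t|K(z)_s|\,\rmd s\,|\zt|(\rmd z)<\infty$ needed to exchange $\int_\Sscr\zt(\rmd z)$ and $\int_0\p t\rmd s$, which is \eqref{eq:ch.in}; the inner process $s\mapsto\int_\Sscr K(z)_s\,\zt(\rmd z)$ is progressively measurable, so its indefinite time integral $D$ is continuous, adapted and hence predictable, and part (i) makes $U$ a square integrable martingale, so part (ii) delivers \eqref{eq:est.pb.ass.con.as}. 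The step I expect to be the main obstacle is the joint measurability of $\Var(\aPP{Z}{U(z)})$ in part (ii): unlike the interchange-of-integration steps, which are routine once the above $L\p1/L\p2$ dominations are in place, it requires the careful countable-partition representation of total variation together with right-continuity to reduce to rational partition points, and every subsequent Tonelli/Fubini manipulation relies on this measurability being available.
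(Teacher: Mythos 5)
Your proposal is correct and follows essentially the same route as the paper's proof: Cauchy--Schwarz plus Fubini for (i), the complex Kunita--Watanabe inequality \eqref{eq:kwi.com} with Tonelli for \eqref{eq:est.var}, the uniqueness of the predictable finite-variation compensator of $ZU$ for \eqref{eq:com.ZU}, and a time-and-parameter Fubini (with the bound \eqref{eq:est.Y}) for (iii); you even supply the rational-partition argument for the joint measurability of the variation that the paper only asserts. The one point you pass over lightly is the upgrade in (iii) from the a.s.\ equality of \eqref{eq:ch.in} at each fixed $t$ to indistinguishability, which the paper handles by a dominated-convergence argument showing the parameter-integrated side is a.s.\ continuous in $t$.
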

\begin{proof}
To see (i) we assume that $U$ is well-defined and adapted and verify the martingale property and the square integrability. From
\[
\Ebb\big[|U_t|\p2\big]\leq\left(\int_\Sscr\Ebb\big[|U(z)_t|\p2\big]|\zt|(\rmd z)\right)\p{1/2}\Big(|\zt|(\Sscr)\Big)\p{1/2}\leq\left(\sup_{z\in\Sscr} \Ebb\big[|U(z)_T|\p2\big]\right)\p{1/2}|\zt|(\Sscr)<+\infty
\]
we deduce the square integrability of $U$. Furthermore we can apply Fubini's theorem to get
\[
\Ebb[U_t|\Fscr_s]=\int_\Sscr \Ebb\left[U(z)_t|\Fscr_s\right]\zeta(\rmd z)=U_s\quad \textnormal{a.s.},\quad 0\leq s\leq t\leq T,
\]
 and this completes the proof of (i).
We now verify (ii). The joint measurability of $\Var(\aPP{Z}{U(\cdot)})$ follows from the joint measurability of $\aPP{Z}{U(\cdot)}$ and from the definition of variation process. To prove \eqref{eq:est.var} observe that Tonelli's theorem, Kunita--Watanabe inequality \eqref{eq:kwi.com} and $\Ebb[\aPP{X}{\overline X}_t]\leq\Ebb[|X\p2_t|]$, $t\geq0$, for every complex-valued square integrable martingale $X$, imply
\[\begin{split}
\Ebb\left[\int_\Sscr \Var(\aPP{Z}{U(z)})_t|\zt|(\rmd z)\right]&=\int_\Sscr\Ebb\big[\Var(\aPP{Z}{U(z)})_t\big]|\zt|(\rmd z)
\\&\leq\sup_{z\in\Sscr} \Ebb\big[\Var(\aPP{Z}{U(z)})_t\big]|\zt|(\Sscr)
\\&\leq
2\Ebb\big[|Z_T|\p2\big]\p{1/2}\left(\sup_{z\in\Sscr} \Ebb\big[|U(z)_T|\p2\big]\right)\p{1/2}|\zt|(\Sscr)<+\infty.
\end{split}
\]
 This proves \eqref{eq:est.var}. To see \eqref{eq:com.ZU}, because of \eqref{eq:est.var} we can apply Fubini's theorem. From (i), for $ 0\leq s\leq t\leq T$, we compute
\[
\begin{split}
\Ebb[Z_tU_t-D_t|\Fscr_s]&=\int_\Sscr\Ebb[Z_tU(z)_t-\aPP{Z}{U(z)}_t|\Fscr_s]\zt(\rmd z)\\&=\int_\Sscr \big(Z_sU(z)_s-\aPP{Z}{U(z)}_s\big)\zt(\rmd z)\\&=Z_sU_s-D_s,
\end{split}
\]
which is \eqref{eq:com.ZU} because $D$ is a predictable process of finite variation starting at zero such that $ZU-D$ is a martingale. Finally, we show (iii). First we notice that the mapping $(s,\om)\mapsto\int_\Sscr K(\om,z)_s\zt(\rmd z)$ is $\Bscr([0,t])\otimes\Fscr_t$-measurable,  for $s\leq t$, that is, it is a progressively measurable process. Therefore, the stochastic process $\int_0\p \cdot\int_\Sscr K(z)_s\zt(\rmd z)\rmd s$ is adapted and continuous, hence predictable. Furthermore, the mapping $(\om,z)\mapsto\int_0\p t K(z)_s \rmd s$ is $\Fscr_t\otimes\Bscr(\Sscr)$-measurable. Therefore $D=\int_\Sscr\int_0\p\cdot K(z)_s \rmd s \zt(\rmd z)$ is an adapted process.
We now observe that, because of \eqref{eq:var.int.com} and (ii), the estimation
\begin{equation}\label{eq:est.Y}
\int_0\p t |K(z)_s|\rmd s\leq \Var(\aPP{Z}{U(z)})_T
\end{equation}
holds. From \eqref{eq:est.var} and \eqref{eq:est.Y}, we then have
\[
\Ebb\left[\int_\Sscr\int_0\p t |K(z)_s|\rmd s|\zt|(\rmd z)\right]\leq \Ebb\left[\int_\Sscr \Var(\aPP{Z}{U(z)})_T|\zt|(\rmd z)\right]<+\infty.
\]
Hence, applying Fubini's theorem we deduce 
\begin{equation}\label{eq:pred.con.mod}
\int_\Sscr\int_0\p t K(z)_s\rmd s\zt(\rmd z)=\int_0\p t\int_\Sscr K(z)_s\zt(\rmd z)\rmd s,\quad \textnormal{a.s., }\quad t\in[0,T].
\end{equation}
By \eqref{eq:est.Y}, because from \eqref{eq:est.var} the mapping $(\om,z)\mapsto \Var(\aPP{Z}{U(z)})_T(\om)$ belongs to $L\p1(|\zt|)$ a.\ s., an application of Lebesgue's theorem on dominated convergence now yields that the left-hand side of \eqref{eq:pred.con.mod} is a.\ s.\ continuous. Hence, identifying $D$ with a continuous version, we can claim the $D$ and $\int_0\p \cdot\int_\Sscr K(z)_s\zt(\rmd z)\rmd s$ are indistinguishable. In particular, the process $D$ of (ii) is predictable, because it is continuous and adapted. From (ii), we get \eqref{eq:est.pb.ass.con.as}. The proof of the proposition is now complete.
\end{proof}

We remark that a sufficient condition for the process $U$ in Proposition \ref{prop:poi.br.com} to be well defined and adapted is the joint progressive measurability of the mapping $U(\cdot)$.

We conclude this subsection with the following lemma:
\begin{lemma}\label{lem:joi.pred.int.fv}
Let $K(\cdot)$ be a jointly predictable complex-valued mapping and $A$ a predictable increasing process. Let $\int\p T|K(\om,z)_s|\rmd A_s(\om)<+\infty$ a.\ s. Then the mapping $(t,\om,z)\mapsto\int_0\p t K(\om,z)_s\rmd A_s(\om)$ is jointly predictable.
\end{lemma}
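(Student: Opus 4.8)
The plan is to reduce the statement to a monotone--class argument. First I would split $K=K\p1+\rmi K\p2$ into real and imaginary parts: since a complex process is jointly predictable precisely when both its parts are, and since $\int_0\p t K(\om,z)_s\,\rmd A_s(\om)=\int_0\p t K\p1\,\rmd A+\rmi\int_0\p t K\p2\,\rmd A$, it suffices to treat real-valued $K$. Second I would reduce to \emph{bounded} $K$: the truncations $K_n:=K\,1_{\{|K|\le n\}}$ are bounded and jointly predictable (because $\{|K|\le n\}\in\Pscr\otimes\Bscr(\Sscr)$), and since $\int\p T|K(\om,z)_s|\,\rmd A_s(\om)<+\infty$ a.s., dominated convergence gives $\int_0\p t K_n(\om,z)_s\,\rmd A_s(\om)\to\int_0\p t K(\om,z)_s\,\rmd A_s(\om)$ for every $(t,\om,z)$ off the exceptional null set (where we invoke the identification convention and set the limit to $0$). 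As $\Pscr\otimes\Bscr(\Sscr)$ is a $\sigma$-algebra, pointwise limits of jointly predictable maps are jointly predictable, so the bounded case implies the general one.

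For the bounded case I would apply the functional monotone class theorem on $[0,T]\times\Om\times\Sscr$. Let $\cH$ be the set of bounded, real-valued, jointly predictable $K(\cdot)$ for which $(t,\om,z)\mapsto\int_0\p tK(\om,z)_s\,\rmd A_s(\om)$ is jointly predictable. Then $\cH$ is a vector space (by linearity of the Stieltjes integral) containing the constants, since $\int_0\p t c\,\rmd A_s=c(A_t-A_0)$ is predictable and independent of $z$. Because $A$ is increasing, $\rmd A$ is a positive measure, so if $0\le K_n\uparrow K$ with $K$ bounded and $K_n\in\cH$, monotone convergence yields $\int_0\p tK_n\,\rmd A\uparrow\int_0\p tK\,\rmd A$ pointwise; the limit is again jointly predictable and finite, so $K\in\cH$. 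Thus $\cH$ is closed under bounded increasing limits.

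The main work, and the step I expect to be the only real obstacle, is exhibiting a multiplicative class $\cM\subseteq\cH$ with $\sigma(\cM)=\Pscr\otimes\Bscr(\Sscr)$. I would take $\cM$ to consist of the indicators $1_{R}(s,\om)\,1_{G}(z)$, where $R=(u,v]\times F$ with $F\in\Fscr_u$ is a predictable rectangle and $G\in\Bscr(\Sscr)$; these are stable under multiplication (intersections of predictable rectangles are predictable rectangles) and generate $\Pscr\otimes\Bscr(\Sscr)$. For such a generator,
\[
1_G(z)\int_0\p t 1_R(s,\om)\,\rmd A_s(\om)=1_F(\om)\,1_G(z)\,\big(A_{t\wedge v}(\om)-A_{t\wedge u}(\om)\big),
\]
so everything rests on the $\Pscr$-measurability of $(t,\om)\mapsto 1_F(\om)(A_{t\wedge v}(\om)-A_{t\wedge u}(\om))$. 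Writing this process as
\[
1_F\,1_{(u,v]}(t)\,A_t-\big(A_u1_F\big)1_{(u,v]}(t)+\big((A_v-A_u)1_F\big)1_{(v,T]}(t),
\]
I would identify each summand as predictable: $1_{(u,v]\times F}$ is a predictable set and $A$ is predictable, so the first term is a product of $\Pscr$-measurable maps; in the last two terms $A_u1_F$ and $(A_v-A_u)1_F$ are $\Fscr_u$- and $\Fscr_v$-measurable respectively, and the building-block processes $\xi\,1_{(u,v]}(\cdot)$, $\eta\,1_{(v,T]}(\cdot)$ with $\xi\in\Fscr_u$, $\eta\in\Fscr_v$ are left-continuous and adapted, hence predictable. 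So the process is $\Pscr$-measurable, and multiplying by $1_G(z)$ gives a $\Pscr\otimes\Bscr(\Sscr)$-measurable map, i.e.\ $\cM\subseteq\cH$. The functional monotone class theorem then yields that $\cH$ contains every bounded $\Pscr\otimes\Bscr(\Sscr)$-measurable process, completing the bounded case and, via the two reductions above, the lemma.
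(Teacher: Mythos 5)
Your proof is correct and follows essentially the same route as the paper: reduce the complex case to the real case, the integrable case to the bounded case by truncation, and then run a monotone class argument over product-form generators of $\Pscr\otimes\Bscr(\Sscr)$. The only difference is cosmetic: the paper takes generators $K(t,\om,z)=f(z)k_t$ with $k$ bounded predictable and cites \cite[Proposition I.3.5]{JS00} for the predictability of $k\cdot A$, whereas you restrict to indicators of predictable rectangles and verify the base case by the explicit decomposition of $1_F(A_{t\wedge v}-A_{t\wedge u})$ into predictable pieces, which makes your version slightly more self-contained.
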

\begin{proof}
Let $f:\Sscr\longrightarrow\Rbb$ be a $\Bscr(\Sscr)$-measurable bounded real-valued function and let $K(\cdot)$ be of the form $K(t,\om,z)=f(z)k_t$, where $k$ is a bounded real-valued predictable process. Let $\Cscr$ denote the class of all real-valued predictable processes of this form. Then, by \cite[Proposition I.3.5]{JS00}, for any $K(\cdot)\in\Cscr$, the mapping $(t,\om,z)\mapsto\int_0\p tK(z,\om)_s\rmd A_s(\om)$ is jointly predictable.  If now $K(\cdot)$ is real-valued and bounded, by the monotone class theorem (see \cite[Theorem 1.4]{HWY92}), it is easy to see that the mapping $(t,\om,z)\mapsto\int_0\p tK(z,\om)_s\rmd A_s(\om)$ is jointly predictable. Now it is a standard procedure to get the claim for every $K(\cdot)$ which is real-valued and integrable with respect to $A$. The case of a complex-valued integrable $K(\cdot)$ follows immediately from this latter one and the proof of the lemma is complete.
\end{proof}
\subsection{Fourier representation of the GKW-decomposition}\label{subsec:four.met}
Let $X$ be a factor process taking values in $\Rbb\p n$. We assume that there exist an $R\in\Rbb\p n$ such that $\Ebb[\exp(2R\p\top X_T)]<\infty$ and define the `strip' $\Sscr:=\{z\in\Cbb\p n: \Rea(z)=R\}$.
A square integrable European option is given and its pay-off is $\et=h({X_T})$ for some real-valued function $h$ with domain in $\Rbb\p n$. We assume that the two-sided Laplace transform $\widetilde{h}$ of $h$ exists in $R$ and that it is integrable on $\Sscr$. Then $h$ has the following representation
\begin{equation}\label{eq:rep.fourier}
h(x)=\frac1{(2\pi\,\rmi)\p n}\,\int_{R-\rmi\,\infty}\p{R+\rmi\,\infty}\exp(\,z\p\top x)\widetilde h(z)\rmd z=\int_\Sscr \exp(\,z\p\top x)\zt(\rmd  z)\,,
\end{equation}  
where $\zt$ is the complex-valued non-atomic finite measure on $\Sscr$ defined by
\begin{equation}\label{eq:meas.zet}
\zt(\rmd  z):=\frac1{(2\pi\,\rmi)\p n}\,\widetilde h(z)\,\rmd  z\,.
\end{equation}

For each $z\in\Sscr$, the process $H(z)=(H(z)_t)_{t\in[0,T]}$ defined by $H(z)_t:=\Ebb[\exp(z\p\top X_T)|\Fscr_t]$ is a square integrable\emph{ complex-valued} martingale.  
Analogously $H=(H_t)_{t\in[0,T]}$, $H_t:=\Ebb[\et|\Fscr_t]$, is a square integrable martingale. 
We recall that we always consider \cadlag martingales.

We now make the following assumption which will be in force throughout this section.

\begin{assumption}\label{ass:joi.prog}
The mapping $(t,\om,z)\mapsto H(\om,z)_t$ is jointly progressively measurable.
\end{assumption}
Under Assumption \ref{ass:joi.prog} we can show the following result:
\begin{proposition}\label{prop:H.mart}
The estimate  $\sup_{z\in\Sscr}\Ebb[|H(z)_t|\p2]<+\infty$ holds. Furthermore, under Assumption \ref{ass:joi.prog}, the process $\widetilde H_t:=\int_\Sscr H(z)_t \zt(\rmd z)$ is a square integrable martingale which is indistinguishable from $H$ and hence the identity
\begin{equation}\label{eq:rep.Hz}
H_t=\int_\Sscr H(z)_t\zt(\rmd z)
\end{equation}
holds.
\end{proposition}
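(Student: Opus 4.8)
The plan is to prove the two assertions in turn and then assemble them. For the uniform bound, I would fix $z\in\Sscr$, so that $z=R+\rmi v$ with $\Rea(z)=R$, and apply the conditional Jensen inequality to the convex map $w\mapsto|w|\p2$. This gives $|H(z)_t|\p2=|\Ebb[\rme\p{z\p\top X_T}\mid\Fscr_t]|\p2\leq\Ebb[|\rme\p{z\p\top X_T}|\p2\mid\Fscr_t]$. Since $X_T$ is real-valued and $\Rea(z)=R$, one has $|\rme\p{z\p\top X_T}|\p2=\rme\p{2R\p\top X_T}$, which does not depend on the imaginary part $v$. Taking expectations and using the tower property yields $\Ebb[|H(z)_t|\p2]\leq\Ebb[\rme\p{2R\p\top X_T}]$ for every $z\in\Sscr$ and every $t$, and the right-hand side is finite by the standing assumption $\Ebb[\rme\p{2R\p\top X_T}]<\infty$. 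Hence $\sup_{z\in\Sscr}\Ebb[|H(z)_t|\p2]<+\infty$.

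With this bound in hand, the martingale assertion is essentially an application of Proposition \ref{prop:poi.br.com}(i) with $U(z)=H(z)$. Assumption \ref{ass:joi.prog} states that $H(\cdot)$ is jointly progressively measurable; by the remark following Proposition \ref{prop:poi.br.com} this guarantees both that $z\mapsto H(\om,z)_t$ is $\Bscr(\Sscr)$-measurable for each $(t,\om)$ and that $\wt H_t=\int_\Sscr H(z)_t\,\zt(\rmd z)$ is well defined and adapted. Together with the uniform $L\p2$-bound just proved, Proposition \ref{prop:poi.br.com}(i) then yields directly that $\wt H$ is a square integrable martingale.

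It remains to identify $\wt H$ with $H$. Starting from the pathwise Fourier representation \eqref{eq:rep.fourier}, namely $\et=h(X_T)=\int_\Sscr\rme\p{z\p\top X_T}\zt(\rmd z)$, I would condition on $\Fscr_t$ and interchange the conditional expectation with the $z$-integration by a conditional version of Fubini's theorem. The integrability needed to license this interchange is
\[
\Ebb\left[\int_\Sscr|\rme\p{z\p\top X_T}|\,|\zt|(\rmd z)\right]=\Ebb[\rme\p{R\p\top X_T}]\,|\zt|(\Sscr)<+\infty,
\]
which holds since $\rme\p{R\p\top X_T}\leq1+\rme\p{2R\p\top X_T}$ is integrable and $\zt$ is a finite measure. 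This gives $H_t=\Ebb[h(X_T)\mid\Fscr_t]=\int_\Sscr H(z)_t\,\zt(\rmd z)=\wt H_t$ almost surely, for each fixed $t\in[0,T]$.

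Finally, both $H$ and $\wt H$ have \cadlag paths (the former by the standing convention on martingales, the latter by taking the \cadlag modification provided by the previous step). A \cadlag process that agrees almost surely at each fixed time with another \cadlag process must agree with it up to indistinguishability, by right-continuity along a countable dense subset of $[0,T]$. Hence $\wt H$ and $H$ are indistinguishable and \eqref{eq:rep.Hz} follows. The only step requiring genuine care is the conditional Fubini interchange in the third paragraph; everything else is a direct assembly of the conditional Jensen inequality and the general machinery already established in Proposition \ref{prop:poi.br.com}.
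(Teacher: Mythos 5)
Your proposal is correct and follows essentially the same route as the paper: conditional Jensen for the uniform $L^2$ bound (the paper phrases it as the pointwise estimate $\sup_{z\in\Sscr}|H(z)_t|^2\leq H(2R)_t\in L^1(\Pbb)$), Proposition \ref{prop:poi.br.com}(i) for the martingale property of $\widetilde H$, and the observation that $H$ and $\widetilde H$ are c\`adl\`ag modifications of each other, hence indistinguishable. The only difference is that you spell out the conditional Fubini interchange that the paper leaves implicit in the word ``clearly''.
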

\begin{proof}
 From
\[
\sup_{z\in\Sscr}|H(z)_t|\p2\leq H(2R)_t\in L\p1(\Pbb),\quad t\in[0,T]
\]
we get the first part of the proposition. By Assumption \ref{ass:joi.prog} and  Proposition \ref{prop:poi.br.com}, (i), the process $\widetilde H$ is a square integrable martingale. To see \eqref{eq:rep.Hz}, we recall that $H$ and $\widetilde H$ are martingales (hence c\`adl\`ag) and clearly modifications of each other. Therefore they are indistinguishable and the proof of the proposition is complete.
\end{proof}

Let $S$ describe the price process of some traded asset. We assume that $S$ is a strictly positive square integrable martingale starting at $S_0>0$.  We now consider the GKW-decomposition of $H$ and $H(z)$ with respect to $S$, that is
\begin{equation}\label{eq:gkw dec.H.Hz}
H=H_0+\vartheta\cdot S+L\,,\qquad H(z)=H(z)_0+\vartheta(z)\cdot S+L(z)\,, \quad z\in\Sscr\,,
\end{equation}
where $\vartheta\in \Lrm\p2(S)$, $\vartheta(z)\in\Lrm\p2_\Cbb(S)$, $L\in\Hscr\p 2_0$ and $L(z)$ is a complex-valued square integrable martingale $z\in\Sscr$. By definition of the GKW-decomposition, $L$ and $L(z)$ are orthogonal to $S$.

\begin{theorem}\label{thm:hed.err}
Let $H$ and $H(z)$ be defined as above and let their respective GKW-decomposition be given by \eqref{eq:gkw dec.H.Hz}. Let Assumption \ref{ass:joi.prog} hold and the mapping $(t,\om,z)\mapsto\vartheta(\om,z)_t$ be jointly predictable. Then the identities
\begin{align}
&\vartheta=\int_\Sscr\vartheta(z)\zt(\rmd z)\label{eq:hed.str}\,;
\\&
\Big(\int_\Sscr\vartheta(z)\zeta(\rmd  z)\Big) \cdot S=\int_\Sscr\big(\vartheta(z)\cdot S\big)\zeta(\rmd  z)\label{eq:stoc.fub}\,;
\\&L=\int_\Sscr L(z)\zt(\rmd  z)\label{eq:hed.error}
\end{align}
hold. In particular the GKW-decomposition of $H$ is
\begin{equation}\label{eq:gkw.eu.op}
H=H_0+\Big(\int_\Sscr\vartheta(z)\zeta(\rmd  z)\Big) \cdot S+\int_\Sscr L(z)\zt(\rmd  z)\,.
\end{equation}
\end{theorem}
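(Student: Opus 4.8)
The plan is to push the representation $H=\int_\Sscr H(z)\,\zt(\rmd z)$ of Proposition~\ref{prop:H.mart} through the GKW-decompositions \eqref{eq:gkw dec.H.Hz} of the $H(z)$, the crux being a Fubini-type interchange between integration against $\zt$ and Stieltjes/stochastic integration against $S$. First I would check that $\widehat\vartheta:=\int_\Sscr\vartheta(z)\,\zt(\rmd z)$ is a well-defined element of $\Lrm\p2(S)$. Using $|\int_\Sscr\vartheta(z)_s\,\zt(\rmd z)|\p2\leq|\zt|(\Sscr)\int_\Sscr|\vartheta(z)_s|\p2|\zt|(\rmd z)$ and Tonelli,
\[
\Ebb\big[|\widehat\vartheta|\p2\cdot\aPP{S}{S}_T\big]\leq|\zt|(\Sscr)\int_\Sscr\Ebb\big[|\vartheta(z)|\p2\cdot\aPP{S}{S}_T\big]\,|\zt|(\rmd z)\leq|\zt|(\Sscr)\p2\sup_{z\in\Sscr}\Ebb\big[|\vartheta(z)|\p2\cdot\aPP{S}{S}_T\big],
\]
and the supremum is finite because the decomposition \eqref{eq:gkw dec.H.Hz} is orthogonal, so $\Ebb[|\vartheta(z)|\p2\cdot\aPP{S}{S}_T]=\|\vartheta(z)\cdot S\|_{\Hscr\p2}\p2\leq\Ebb[|H(z)_T|\p2]$ is bounded uniformly in $z$ by Proposition~\ref{prop:H.mart}.

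The main step is the pathwise Fubini identity
\[
\int_\Sscr\big(\vartheta(z)\cdot\aPP{S}{Y}\big)\,\zt(\rmd z)=\widehat\vartheta\cdot\aPP{S}{Y},\qquad Y\in\Hscr\p2,
\]
which I would obtain by establishing a.s.\ finiteness of the associated double integral of absolute values. The Kunita--Watanabe inequality (the pathwise form behind \eqref{eq:kwi.com}) gives, for a universal constant $C$, $\int_0\p T|\vartheta(z)_s|\,\rmd\Var(\aPP{S}{Y})_s\leq C\big((|\vartheta(z)|\p2\cdot\aPP{S}{S})_T\big)\p{1/2}(\aPP{Y}{Y}_T)\p{1/2}$; integrating over $\Sscr$, taking expectations and applying Jensen bounds $\Ebb[\int_\Sscr((|\vartheta(z)|\p2\cdot\aPP{S}{S})_T)\p{1/2}|\zt|(\rmd z)]$ by $C|\zt|(\Sscr)\sup_{z}\Ebb[|\vartheta(z)|\p2\cdot\aPP{S}{S}_T]\p{1/2}<+\infty$. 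Since $(\aPP{Y}{Y}_T)\p{1/2}<+\infty$ a.s., the full double integral is finite a.s.\ and the Fubini theorem for complex measures applies path by path; the same estimate, combined with Lemma~\ref{lem:joi.pred.int.fv} and the joint predictability of $\vartheta(\cdot)$, shows that the $\zt$-integrals encountered below are predictable.

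With this in hand the three identities follow from the characterizations recalled in Section~\ref{sec:GKW.dec}. For \eqref{eq:hed.str} I apply Proposition~\ref{prop:poi.br.com}(ii) with $Z=S$ and $U(z)=H(z)$: here $\aPP{S}{H(z)}=\vartheta(z)\cdot\aPP{S}{S}$ (the residual $L(z)$ being orthogonal to $S$), which is jointly predictable, so $D=\int_\Sscr\aPP{S}{H(z)}\,\zt(\rmd z)$ is predictable and \eqref{eq:com.ZU} yields $\aPP{S}{H}=D$; the Fubini identity with $Y=S$ rewrites $D=\widehat\vartheta\cdot\aPP{S}{S}$, and comparison with \eqref{eq:gkw.dec.int} gives $\vartheta=\widehat\vartheta$. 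For \eqref{eq:stoc.fub} set $V:=\int_\Sscr(\vartheta(z)\cdot S)\,\zt(\rmd z)$, a square integrable martingale by Proposition~\ref{prop:poi.br.com}(i) after choosing a jointly progressively measurable version of $z\mapsto\vartheta(z)\cdot S$; then $V_0=0$, and for every $Y\in\Hscr\p2$ Proposition~\ref{prop:poi.br.com}(ii) together with $\aPP{\vartheta(z)\cdot S}{Y}=\vartheta(z)\cdot\aPP{S}{Y}$ and the Fubini identity give $\aPP{V}{Y}=\widehat\vartheta\cdot\aPP{S}{Y}$, whence $V=\widehat\vartheta\cdot S$ by the characterization of the stochastic integral.

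Finally, for \eqref{eq:hed.error} I integrate the $H(z)$-decomposition in \eqref{eq:gkw dec.H.Hz} against $\zt$: the left-hand side is $H$ by Proposition~\ref{prop:H.mart}, the constant term is $H_0$ (evaluate \eqref{eq:rep.Hz} at $t=0$), the integral of $\vartheta(z)\cdot S$ is $\vartheta\cdot S$ by \eqref{eq:stoc.fub} and \eqref{eq:hed.str}, and $\int_\Sscr L(z)\,\zt(\rmd z)$ is a square integrable martingale by Proposition~\ref{prop:poi.br.com}(i); subtracting the GKW-decomposition $H=H_0+\vartheta\cdot S+L$ forces $L=\int_\Sscr L(z)\,\zt(\rmd z)$, and collecting the three identities gives \eqref{eq:gkw.eu.op}. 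The main obstacle is the pathwise Fubini step, i.e.\ securing the a.s.\ integrability that legitimizes interchanging $\int_\Sscr\cdot\,\zt(\rmd z)$ with the Stieltjes integral against $\aPP{S}{Y}$; the surrounding measurability bookkeeping (predictability of the $\zt$-integrals via Lemma~\ref{lem:joi.pred.int.fv}, and existence of jointly progressively measurable versions) is routine but must be checked in order to invoke Proposition~\ref{prop:poi.br.com}.
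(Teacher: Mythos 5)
Your argument is correct, and for \eqref{eq:hed.str} and \eqref{eq:hed.error} it coincides with the paper's proof: the same uniform bound $\Ebb[|\vartheta(z)\cdot S_T|\p2]\le\Ebb[|H(z)_T|\p2]$ from the orthogonality of the GKW-decomposition, the same use of Proposition \ref{prop:poi.br.com}(ii) with $Z=S$ to identify $\aPP{S}{H}$, and the same ``integrate the decomposition and subtract'' step for $L$, with Proposition \ref{prop:poi.br.com}(i) supplying the martingale property of $\int_\Sscr L(z)\zt(\rmd z)$. Where you genuinely depart from the paper is \eqref{eq:stoc.fub}. The paper first produces a jointly optional version of $z\mapsto\vartheta(z)\cdot S$ via Stricker--Yor and then invokes Jacod's stochastic Fubini theorem (Th\'eor\`eme 5.44 of \cite{J79}) to identify $\int_\Sscr(\vartheta(z)\cdot S)\zt(\rmd z)$ with $\big(\int_\Sscr\vartheta(z)\zt(\rmd z)\big)\cdot S$ directly. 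You instead characterize the stochastic integral through its brackets: you show that $V:=\int_\Sscr(\vartheta(z)\cdot S)\zt(\rmd z)$ is a square integrable martingale with $V_0=0$ and $\aPP{V}{Y}=\big(\int_\Sscr\vartheta(z)\zt(\rmd z)\big)\cdot\aPP{S}{Y}$ for every $Y\in\Hscr\p{\,2}$, the key ingredient being a pathwise Stieltjes--Fubini interchange justified by the Kunita--Watanabe inequality. This is more self-contained (it only uses the bracket characterization already recalled in Section \ref{sec:GKW.dec} and Proposition \ref{prop:poi.br.com}), at the price of having to run the Kunita--Watanabe/Fubini estimate against every test martingale $Y$ rather than once; note that you still cannot avoid the Stricker--Yor-type step, since a jointly measurable (progressive) version of $z\mapsto\vartheta(z)\cdot S$ is needed merely to define $V$ as an adapted process and to invoke Proposition \ref{prop:poi.br.com}(i) -- you flag this as routine, and in the paper it is exactly the point where \cite{StYo78} is cited.
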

\begin{remark}
In a nutshell, this theorem shows that the Fourier representation \eqref{eq:rep.fourier} of a claim and its GKW-decomposition can be interchanged under very general conditions. In other words, the GKW-decomposition of the claim can be obtained by integrating the GKW-decomposition of the conditional moment generating function $H(z)_t$ in a suitable complex domain $\Sscr$ against the measure $\zt$ that determines the claim via \eqref{eq:rep.fourier}.
\end{remark}

\begin{proof} 
First we show \eqref{eq:hed.str}. Clearly $\vartheta(z)\cdot\aPP{S}{S}_T(\om)<+\infty$ a.\ s. Hence, because of
\[
\aPP{S}{H(z)}_t(\om)=\int_0\p t\vartheta(\om,z)_s\rmd \aPP{S}{S}_s(\om),
\] 
 from Lemma \ref{lem:joi.pred.int.fv}, $(t,\om,z)\mapsto\aPP{S}{H(z)}_t(\om)$ is jointly predictable. So $\int_\Sscr\aPP{S}{H(z)}\zt(\rmd z)$ is a predictable process. Proposition \ref{prop:poi.br.com} (ii) and Proposition \ref{prop:H.mart} yield
\begin{equation}\label{eq:rep.pb.SH}
\aPP{S}{H}=\int_\Sscr\aPP{S}{H(z)}\zt(\rmd z)=\int_\Sscr\int_0\p \cdot\vartheta(z)_s\rmd \aPP{S}{S}_s\zt(\rmd z).
\end{equation}
Furthermore, for every $z\in\Sscr$, the identity 
\[
\Ebb[|H(z)_t|\p2]=\Ebb[|\vartheta(z)\cdot S_t|\p 2+|L(z)_t|\p2+|H(z)_0|\p2]
\]
holds. Hence we can estimate 
\begin{equation}\label{eq:est.GKW.hz}
\Ebb[|\vartheta(z)\cdot S_t|\p 2]\leq \Ebb[|H(z)_t|\p2]\,,\qquad\Ebb[|L(z)_t|\p2]\leq \Ebb[|H(z)_t|\p2],\quad z\in\Sscr,\quad t\geq0,
\end{equation}
which, from Proposition \ref{prop:H.mart}, imply
\begin{equation}\label{eq:est.GKW.hz.sup}
\sup_{z\in\Sscr}\Ebb\left[|\vartheta(z)\cdot S_t|\p 2\right]<+\infty\,,\qquad \sup_{z\in\Sscr}\Ebb\left[|L(z)_t|\p2\right]<+\infty.
\end{equation}
 Because of 
\begin{equation}\label{eq:int.thtz.inL2}
\begin{split}
\Ebb\left[\int_0\p T\int_\Sscr|\vartheta(z)_t|\p 2|\zeta|(\rmd  z)\rmd\aPP{S}{S}_t\right]
&=\int_\Sscr\Ebb\left[\int_0\p T|\vartheta(z)_t|\p 2\rmd\aPP{S}{S}_t\right]|\zeta|(\rmd  z)
\\&
=\int_\Sscr\Ebb\left[|\vartheta(z) \cdot{S}_T|\p 2\right]|\zeta|(\rmd  z)
<\infty,
\end{split}
\end{equation}
where in the last estimation we applied \eqref{eq:est.GKW.hz.sup},  Fubini's theorem and \eqref{eq:rep.pb.SH} yield
\[
\int_0\p \cdot\int_\Sscr\vartheta(z)_s\zeta(\rmd  z)\rmd\aPP{S}{S}_s=\aPP{S}{H},
\]
which, since $\vartheta=\rmd\aPP{S}{H}/\rmd\aPP{S}{S}$, proves \eqref{eq:hed.str}. Now we show \eqref{eq:stoc.fub}. Because of \eqref{eq:hed.str}, the predictable process 
\[
\int_\Sscr\vartheta(z)\zt(\rmd z)=\int_\Sscr\vartheta(z)h(z)|\zt|(\rmd z)
\]
belongs to $L\p2(S)$, where $h$, with $|h|=1$, is the density of $\zt$ with respect to $|\zt|$. From \cite[Theorem 1 in \S 5.2]{StYo78}, there exists a jointly optional mapping $(t,\om,z)\mapsto\widetilde Y(\om,z)_t$ such that $\widetilde Y(z)$ is indistinguishable from $\vartheta(z)\cdot S$, for every $z\in\Sscr$. Hence we can apply \cite[Theorem 5.44]{J79}, to deduce that the process $Y\p\zt:=\int_\Sscr\big(\vartheta(z)\cdot S\big)\zt(\rmd z)$ is well defined and a version of
\[
\vartheta\cdot S=\int_0\p \cdot\left(\int_\Sscr\vartheta(z)_s\zeta(\rmd z)\right)\rmd S_s,
\]
and we do not distinguish these versions. This proves \eqref{eq:stoc.fub}.
In the next step we show \eqref{eq:hed.error}. From \eqref{eq:gkw dec.H.Hz}, for every $z\in\Sscr$, we get the identity
\begin{equation}\label{eq:Lz.rew}
L(z)_t= H(z)_t-H(z)_0-\vartheta(z)\cdot S_t=H(z)_t-H(z)_0-\widetilde Y(z)_t,\quad\textnormal{ a.s., }\ t\geq0.
\end{equation}
We can therefore integrate \eqref{eq:Lz.rew} with respect to $\zt$, obtaining
\[
\widetilde L_t:=\int_\Sscr \big(H(z)_t-H(z)_0-\vartheta(z)\cdot S_t\big)\zt(\rmd z)=H_t-H_0-\vartheta\cdot S_t=L_t,\quad\textnormal{ a.s., }\ t\geq0.
\]
Hence $\widetilde L$ is a version of $L$ and therefore $\Fbb$-adapted (because $\Fbb$ is complete). From \eqref{eq:est.GKW.hz} we can apply Proposition \ref{prop:poi.br.com} (i), to deduce that $\widetilde L$ is a martingale. Hence, $L$ and $\widetilde L$ are indistinguishable. The proof of the theorem is now complete. 
\end{proof}

In the proposition below, the set of parameters is $\Sscr:=\Sscr\p1\times\Sscr\p2$, where $\Sscr\p1$ and $\Sscr\p2$ are two strips of $\Cbb\p n$. Hence all joint measurability properties (see Definition \ref{def:joi.mea}) are formulated with respect to the $\sig$-algebra $\Bscr(\Sscr)=\Bscr(\Sscr\p1)\otimes\Bscr(\Sscr\p2)$.
\begin{theorem}\label{thm:pred.cov.mixt}
Let $\Sscr\p j:=\{z\in\Cbb^n: \Rea(z)=R^j\}$ with $\Ebb[\exp(2(R\p j)\p\top X_T)]<+\infty$ and let $\et\p j$ have the representation \eqref{eq:rep.fourier} on the strip $\Sscr\p j$ with respect to the measure $\zt\p j$ (cf.\ \eqref{eq:meas.zet}); let $L\p j$ denote the orthogonal component in the GKW decomposition of $H\p j=\big(\Ebb[\et\p j|\Fscr_t]\big)_{t\in[0,T]}$ with respect to $S$, for $j=1,2$. 

\textnormal{(i)} If the mapping $(t,\om,z_1,z_2)\mapsto\aPP{L(z_1)}{L(z_2)}_t(\om)$ is jointly measurable  then $(t,\om,z_1,z_2)\mapsto\Var(\aPP{L(z_1)}{L(z_2)})_t(\om)$ is jointly measurable as well and 
\begin{equation}\label{eq:joi.bb.var}\begin{split}
\Ebb\bigg[\int_{\Sscr\p1}\int_{\Sscr\p2}&\Var(\aPP{L(z_1)}{L(z_2)})_T|\zt\p2|(\rmd z_2)|\zt\p1|(\rmd z_1)\bigg]<+\infty
\end{split}
\end{equation}
holds. Moreover, if the process $D:=\int_{\Sscr\p1}\int_{\Sscr\p2}\aPP{L(z_1)}{L(z_2)}\zt\p2(\rmd z_2)\zt\p1(\rmd z_1)$ is predictable, the covariation of the square integrable martingales $L\p1$ and $L\p2$ is given by
\begin{equation}\label{eq:joi.pg}
\aPP{L\p 1}{L\p 2}=\int_{\Sscr\p1}\int_{\Sscr\p2}\aPP{L(z_1)}{L(z_2)}\zt\p2(\rmd z_2)\zt\p1(\rmd z_1)\,.
\end{equation}
and hence
\begin{equation}\label{eq:joi.pb.exp}
\Ebb[\aPP{L\p 1}{L\p 2}_T]=\int_{\Sscr\p1}\int_{\Sscr\p2}\Ebb[\aPP{L(z_1)}{L(z_2)}_T]\zt\p2(\rmd z_2)\zt\p1(\rmd z_1)\,.
\end{equation}

\textnormal{(ii)} If furthermore there exists a jointly progressively measurable complex-valued stochastic process $K(z_1,z_2)=K\p1(z_1,z_2)+\rmi K\p2(z_1,z_2)$ such that
\[
\aPP{L(z_1)}{L(z_2)}=\int_0\p \cdot K(z_1,z_2)_s\rmd s,
\]
then the identity
\begin{equation}\label{eq:for.pb.gen}
\aPP{L\p 1}{L\p 2}=\int_0\p\cdot\int_{\Sscr\p1}\int_{\Sscr\p2}K(z_1,z_2)_s\zt\p2(\rmd z_2)\zt\p1(\rmd z_1)\rmd s
\end{equation}
holds and
\begin{equation}\label{eq:for.pb.gen.exp}
\Ebb[\aPP{L\p 1}{L\p 2}_T]=\int_0\p T\int_{\Sscr\p1}\int_{\Sscr\p2}\Ebb[K(z_1,z_2)_s]\zt\p2(\rmd z_2)\zt\p1(\rmd z_1)\rmd s.
\end{equation}
\end{theorem}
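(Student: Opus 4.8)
The plan is to read Theorem~\ref{thm:pred.cov.mixt} as the two-parameter analogue of Proposition~\ref{prop:poi.br.com}, the essential new inputs being the representations $L\p j=\int_{\Sscr\p j}L(z_j)\zt\p j(\rmd z_j)$ furnished by \eqref{eq:hed.error} of Theorem~\ref{thm:hed.err}, together with the uniform bound $\sup_{z}\Ebb[|L(z)_T|\p2]<+\infty$ recorded in \eqref{eq:est.GKW.hz.sup}. Throughout I would repeatedly invoke the Kunita--Watanabe estimate \eqref{eq:kwi.com} and the inequality $\Ebb[\aPP{X}{\overline X}_t]\leq\Ebb[|X_t|\p2]$ to dominate the relevant double integrals, exactly as in the proof of Proposition~\ref{prop:poi.br.com}.

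For part~(i), the joint measurability of $\Var(\aPP{L(z_1)}{L(z_2)})$ follows at once from that of $\aPP{L(z_1)}{L(z_2)}$ and the definition \eqref{eq:var.com.val.proc} of the variation (a supremum over rational partitions). To get \eqref{eq:joi.bb.var} I would apply Tonelli to move the expectation inside the $|\zt\p1|\otimes|\zt\p2|$-integral, bound $\Ebb[\Var(\aPP{L(z_1)}{L(z_2)})_T]$ by $2\Ebb[|L(z_1)_T|\p2]\p{1/2}\Ebb[|L(z_2)_T|\p2]\p{1/2}$ via \eqref{eq:kwi.com}, and then dominate by the suprema to obtain the finite bound $2(\sup_{z_1}\Ebb[|L(z_1)_T|\p2])\p{1/2}(\sup_{z_2}\Ebb[|L(z_2)_T|\p2])\p{1/2}|\zt\p1|(\Sscr\p1)|\zt\p2|(\Sscr\p2)$. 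For the central identity \eqref{eq:joi.pg} I would verify that $D$ satisfies the characterizing properties of $\aPP{L\p1}{L\p2}$: it is predictable by hypothesis, and \eqref{eq:joi.bb.var} ensures it is of finite variation and null at $0$ (recall $L(z)_0=0$), so it remains only to show that $L\p1L\p2-D$ is a martingale. Writing $L\p1_tL\p2_t=\int_{\Sscr\p1}\int_{\Sscr\p2}L(z_1)_tL(z_2)_t\,\zt\p2(\rmd z_2)\zt\p1(\rmd z_1)$ (a Fubini interchange legitimised by the uniform $L\p2$-bounds and Cauchy--Schwarz), the difference $L\p1_tL\p2_t-D_t$ equals the double integral of $L(z_1)_tL(z_2)_t-\aPP{L(z_1)}{L(z_2)}_t$, each term of which is a martingale; pulling $\Ebb[\,\cdot\mid\Fscr_s]$ through the double integral by conditional Fubini then yields the martingale property. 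Uniqueness of the predictable covariation gives \eqref{eq:joi.pg}, and evaluating at $t=T$ and taking expectations (Fubini once more) gives \eqref{eq:joi.pb.exp}.

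For part~(ii), under the absolutely continuous form $\aPP{L(z_1)}{L(z_2)}=\int_0\p\cdot K(z_1,z_2)_s\,\rmd s$ I would show that $D$ admits a continuous adapted, hence predictable, version, so that part~(i) applies. Mirroring Proposition~\ref{prop:poi.br.com}(iii): the map $(s,\om)\mapsto\int_{\Sscr\p1}\int_{\Sscr\p2}K(z_1,z_2)_s\,\zt\p2(\rmd z_2)\zt\p1(\rmd z_1)$ is progressively measurable, so its time-integral is continuous and adapted; the pointwise bound $\int_0\p T|K(z_1,z_2)_s|\,\rmd s\leq\Var(\aPP{L(z_1)}{L(z_2)})_T$ (from \eqref{eq:var.int.com}) together with \eqref{eq:joi.bb.var} supplies the integrability for a Fubini interchange of the time and parameter integrals, identifying $\int_0\p t\int_{\Sscr\p1}\int_{\Sscr\p2}K\,\zt\p2\zt\p1\,\rmd s$ with $D_t$ almost surely, while dominated convergence gives continuity of this version. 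With $D$ thus predictable, part~(i) yields \eqref{eq:for.pb.gen}, and \eqref{eq:for.pb.gen.exp} follows by taking $t=T$ and expectations.

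The main obstacle is the justification of the repeated Fubini interchanges — in particular the passage of the conditional expectation through the double parameter integral in the martingale verification of~(i), and the interchange of time and parameter integrals in~(ii). Both rest entirely on the single estimate \eqref{eq:joi.bb.var}, whose derivation via the Kunita--Watanabe inequality is therefore the linchpin of the argument; once it is in hand, the remaining steps are the routine measurability bookkeeping and the uniqueness characterisation of the predictable covariation.
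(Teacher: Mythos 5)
Your proposal is correct and follows essentially the same route as the paper: the authors' proof simply notes that $L\p j=\int_{\Sscr\p j}L(z_j)\zt\p j(\rmd z_j)$ by \eqref{eq:hed.error}, that the estimate \eqref{eq:est.GKW.hz} holds on both strips, and then states that one proceeds exactly as in Proposition \ref{prop:poi.br.com}, omitting the details. What you have written is precisely that two-parameter adaptation carried out in full (Kunita--Watanabe bound for \eqref{eq:joi.bb.var}, Fubini plus the uniqueness characterisation of the predictable covariation for \eqref{eq:joi.pg}, and the Proposition \ref{prop:poi.br.com}(iii)-style argument for part (ii)), so it matches the intended argument.
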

\begin{proof}
From \eqref{eq:hed.error}, we have $L\p j=\int_{\Sscr\p j}L(z_j)\zt\p j(\rmd z_j)$, $j=1,2$. Furthermore, the estimation \eqref{eq:est.GKW.hz} holds with both $\Sscr=\Sscr\p1$ and $\Sscr=\Sscr\p2$. Hence, to prove this theorem one has to proceed exactly as in Proposition \ref{prop:poi.br.com} and we omit further details.
\end{proof}

We now combine in a theorem the results obtained in this section with those of Section \ref{sec:sem.quad} (for complex valued semimartingales).
\begin{theorem}\label{thm:comsec3.sec4}
Let the factor process $X$ be a locally square integrable semimartingale and $S=\rme\p {X\p1}$ be a square integrable martingale.

\textnormal{(i)} Let $f:[0,T]\times\Rbb\p n\times\Sscr\p j \longrightarrow\Cbb $ be a $\Bscr([0,T])\otimes\Bscr(\Rbb\p n)\otimes\Bscr(\Sscr)$ measurable function with $f\in C\p{1,2}([0,T]\times\Rbb\p n)$, such that $H(z)_t=f(t,X_t\p1,\ldots,X\p n,z)$. 

\textnormal{(ii)} Let $\et\p j=h\p j(S_T)$ be an European option such that $h\p j$ is a function with representation as in \eqref{eq:rep.fourier} and \eqref{eq:meas.zet} over $\Sscr\p j$,  $j=1,2$.

Then the assumptions of Proposition \ref{prop:poi.br.com}, Theorem \ref{thm:hed.err} and Theorem \ref{thm:pred.cov.mixt} (i) hold.
\end{theorem}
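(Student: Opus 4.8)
The statement is a verification lemma: each of the three cited results carries abstract measurability and integrability hypotheses, and the plan is to discharge them one at a time from the structural assumptions (i)--(ii), using the explicit formulas of Section \ref{sec:sem.quad}.

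First I would establish Assumption \ref{ass:joi.prog}. Since $X$ is \cadlag it is progressively measurable, so for every $t$ the map $(s,\om)\mapsto(s,X_s(\om))$ is $\Bscr([0,t])\otimes\Fscr_t$-measurable; composing it with the $\Bscr([0,T])\otimes\Bscr(\Rbb\p n)\otimes\Bscr(\Sscr)$-measurable function $f$ shows that $(t,\om,z)\mapsto f(t,X_t(\om),z)=H(\om,z)_t$ is jointly progressively measurable, which is Assumption \ref{ass:joi.prog}. Its validity makes Proposition \ref{prop:H.mart} applicable, yielding the square integrability of each $H(z)$ and the bound $\sup_{z\in\Sscr}\EE[|H(z)_T|\p2]<+\infty$; together with the remark following Proposition \ref{prop:poi.br.com}, this supplies the standing hypotheses of that proposition with $U(z)=H(z)$.

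Next I would verify the joint predictability of $\vartheta(\cdot)$ demanded by Theorem \ref{thm:hed.err}. For each fixed $z$ the square integrable martingale $H(z)$ satisfies the hypotheses of Theorem \ref{thm:poi.br.res.smar}, so $\vartheta(z)$ is given explicitly by \eqref{eq:var.opt.str} with $f(\cdot,\cdot,z)$ in place of $f\p i$. The partial derivatives $\partial_{x_j}f(t,x,z)$ arise as limits of difference quotients of the jointly Borel function $f$, hence are themselves $\Bscr([0,T])\otimes\Bscr(\Rbb\p n)\otimes\Bscr(\Sscr)$-measurable, and evaluating them at the left-continuous adapted process $X_{t-}$ produces jointly predictable mappings. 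The Radon--Nikodym ratios of predictable covariations in \eqref{eq:var.opt.str} are free of $z$ and predictable, while the $z$-dependent jump ratio $\rmd\aPP{\widetilde S\p d}{H(z)\p d}_t/\rmd\aPP{\widetilde S}{\widetilde S}_t$ is handled by writing its numerator via \eqref{eq:pb.R.Yi} as the $\nu$-integral of $(\rme\p{x_1}-1)W(z)$ and invoking Lemma \ref{lem:joi.pred.int.fv}, the $z$-measurability of $W(z)$ being inherited from $f$ through \eqref{eq:proc.W}. Assembling these pieces gives joint predictability of $\vartheta(\cdot)$ and completes the hypotheses of Theorem \ref{thm:hed.err}.

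Finally I would check the joint-measurability hypotheses of Proposition \ref{prop:poi.br.com}(ii) and of Theorem \ref{thm:pred.cov.mixt}(i). Taking $Z=S$, formula \eqref{eq:pb.Y.S} expresses $\aPP{S}{H(z)}$ as a predictable-covariation integral of the $z$-dependent integrands $S_{t-}\partial_{x_j}f(t,X_{t-},z)$ plus the $\nu$-integral of $(\rme\p{x_1}-1)S_{s-}W(z)$; the same measurability inputs and Lemma \ref{lem:joi.pred.int.fv} then make $(t,\om,z)\mapsto\aPP{S}{H(z)}_t$ jointly measurable. Applying \eqref{eq:res.pb.exp} with $Y\p1=H(z_1)$ and $Y\p2=H(z_2)$ writes $\aPP{L(z_1)}{L(z_2)}$ as a finite combination of such terms in the two parameters, each jointly measurable in $(t,\om,z_1,z_2)$, which is the hypothesis of Theorem \ref{thm:pred.cov.mixt}(i). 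The hard part throughout is the parameter dependence of the \emph{jump} contributions: showing that the state-space integrals $\int_E(\rme\p{x_1}-1)W(z)(t,x)K_t(\rmd x)$ and $\int_E W(z_1)(t,x)W(z_2)(t,x)K_t(\rmd x)$ are jointly (indeed predictably) measurable. This is precisely what the apparatus of Subsection \ref{subsec:par.proc}---Lemma \ref{lem:joi.pred.int.fv} together with the Fubini-type measurability in Proposition \ref{prop:poi.br.com}---is designed to deliver, and it is the step requiring the most care.
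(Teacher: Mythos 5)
Your proposal is correct and follows essentially the same route as the paper: joint progressive measurability of $H(\cdot)$ from the structural form $H(z)_t=f(t,X_t,z)$, joint predictability of $\vartheta(\cdot)$ read off from the explicit formula \eqref{eq:var.opt.str}, and joint measurability of $(t,\om,z_1,z_2)\mapsto\aPP{L(z_1)}{L(z_2)}_t$ via the parameter-dependent $W(\cdot,z)$ and \eqref{eq:res.pb.exp}. The paper's own proof is considerably terser (it asserts these measurability facts with minimal justification), so your added detail on the composition argument, the difference-quotient measurability of $\partial_{x_j}f$, and the use of Lemma \ref{lem:joi.pred.int.fv} for the jump integrals is a faithful expansion of the same argument rather than a different one.
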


\begin{proof}
The joint progressively measurability of $H(\cdot)$ is clear because of the assumption (i) in the theorem. The joint predictability of $\vartheta(\cdot)$ follows from Theorem~\ref{thm:poi.br.res.smar}, in particular from \eqref{eq:var.opt.str}. Hence the assumptions of Theorem \ref{thm:hed.err} are satisfied.  We define for $z\in\Sscr\p j$ and $j=1,2$, $f\p j(\cdot,z):=f(\cdot,z_j)$ and
\begin{equation}\label{eq:proc.Wz}
\begin{split}
W\p j(t,\om,x_1,\ldots,x_n,z)&:=W (t,\om,x_1,\ldots,x_n,z_j):=\\&f(t,x_1+X\p1_{t-}(\om),\ldots,x_n+X\p n_{t-}(\om),z_j)-f(t,X\p1_{t-}(\om),\ldots,X\p n_{t-}(\om),z_j).
\end{split}
\end{equation}
The mapping $(t,\om,x_1,\ldots,x_n,z_j)\mapsto W(t,\om,x_1,\ldots,x_n,z_j)$ is $\Pscr\otimes\Bscr(\Rbb\p n)\otimes\Bscr(\Sscr\p j)$-measurable. So, from \eqref{eq:res.pb.exp}, we deduce that the mapping $(t,\om,z_1,z_2)\mapsto\aPP{L(z_1)}{L(z_2)}_t(\om)$ is jointly predictable and hence it is jointly measurable. This yields  the predictability of the process $D$ defined in Theorem \ref{thm:pred.cov.mixt}. Hence all the assumptions of  Theorem \ref{thm:pred.cov.mixt}, (i) are fulfilled. The proof of the theorem in now complete.
\end{proof}

\subsection{Variance-Optimal Hedging in Affine Stochastic Volatility Models}\label{subs:vo.smar.mod}
We now more closely discuss the case in which $(X,V):=(X\p1,X\p2)$ is an affine process and a semimartingale. An affine process $(X,V)$ in the sense of \cite{DFS03} is a stochastically continuous Markov process taking values in $\Rbb\times\Rbb_+$ such that the joint conditional characteristic function of $(X_t,V_t)$ is of the form
\begin{equation}\label{eq:char.fun.aff.pr}
\Ebb\big[\exp( u_1X_t+ u_2V_t)|\Fscr_s\big]=\exp\big(\phi_{t-s}(u_1,u_2)+\psi_{t-s}(u_1,u_2)V_s+u_1X_s\big),\quad s\leq t
\end{equation}
where $(u_1,u_2)\in\rmi\Rbb\p2$. The complex-valued functions $\phi_t$ and $\psi_t$ are the solutions of the \emph{generalized Riccati equations}
\begin{subequations}
\begin{align}
\partial_t\phi_t(u_1,u_2) &= F((\psi_t(u_1,u_2),u_2)),\quad\phi_0(u_1,u_2)=0, \label{eq:ex.Ri.eq.phi} \\
\partial_t\psi_t(u_1,u_2) &= R((\psi_t(u_1,u_2),u_2)),\quad\psi_0(u_1,u_2)=u_2,\label{eq:ex.Ri.eq.psi}
\end{align}
\end{subequations}
where $(F,R)$ are the L\'evy symbols associated with the L\'evy triplets $(\bt\p 0,\gm\p 0,\kappa\p 0)$ and $(\bt\p 1,\gm\p 1,\kappa\p 1)$ respectively. That is, setting $ u:=(u_1,u_2)\p\top$ and considering only Markov processes without killing, we have 
\begin{align*}
F(u) &:= u\p\top\bt\p0+\frac12 u\p\top\gm\p0 u+\int_{\Rbb_+\times\Rbb}\big(\rme\p{ u\p\top x}-1- u\p\top h(x_1,x_2)\big)\kappa\p0(\rmd x),\\
R(u) &:= u\p\top\bt\p1+\frac12 u\p\top\gm\p1 u+\int_{\Rbb_+\times\Rbb}\big(\rme\p{ u\p\top x}-1- u\p\top h(x_1,x_2)\big)\kappa\p1(\rmd x).
\end{align*}
Under a mild non-explosion condition, affine processes are semimartingales with absolutely continuous characteristics (cf.~\cite[Sec.~9]{DFS03}) and according to \cite[\S3]{KalMV11}, the differential characteristics $(b,c,K)$ are given by
\[
b_t= \begin{bmatrix}
       \bt_1\p0+\bt_1\p1 V_{t-}\\[.5em] 
						     \bt\p0_2+\bt_1\p2 V_{t-}
     \end{bmatrix},\quad 
		c_t=\begin{bmatrix}
		\gm\p0_{11}+\gm\p1_{11}V_{t-}&\gm\p1_{12}V_{t-}\\[.5em]
		\gm\p1_{12}V_{t-}&\gm\p1_{22}V_{t-}
		\end{bmatrix}
		,\quad
K_t(\om,\rmd x)=\kappa\p0(\rmd x)+\kappa\p1(\rmd x)V_{t-}(\om),
\]
where $\bt\p i$ belongs to $\Rbb\p2$, $\gm\p i$ is a symmetric matrix in $\Rbb\p{2\times2}$ and $\kappa\p i$ is a L\'evy measure on $\Rbb\p2$ with support in $\Rbb_+\times\Rbb$, $i=0,1$. Furthermore, $\gm\p0_{22}=\gm\p0_{12}=0$; $\bt\p0_2-\int_{\Rbb_+\times\Rbb}h\p2(x_2)\kappa_0(\rmd x)$ is well-defined and nonnegative (we recall that we define $h(x_1,x_2):=(h\p1(x_1),h\p2(x_2)):=(x_11_{\{|x_1|\leq1\}},x_21_{\{|x_2|\leq1\}})$) and we assume $\int_{\{x_2>1\}}x_2\kappa\p1(\rmd x)<+\infty$, to rule out explosion in finite time (cf. \cite[Lem.~9.2]{DFS03}). 

We now deduce some of the results of \cite{KalP07} from the theory that we have developed. Assume that $(X,V)$ is locally square integrable and that $\rme\p{zX_T}$ is square integrable for every $z$ in a given complex strip $\Sscr=\{z\in\Cbb: z=R+\rmi\Ima(z)\}$. Moreover, assume that $S=\rme\p X$ is a square integrable martingale. In this case, $F(1,0)=R(1,0)=0$, where $F$ and $R$ denote the L\'evy symbols associated with the L\'evy triplets $(\bt\p 0,\gm\p 0,\kappa\p 0)$ and $(\bt\p 1,\gm\p 1,\kappa\p 1)$ respectively. Because of the affine property, $H(z)$ takes the form
\[
H(z)_t=\rme\p{zX_t}\exp\big(\phi_{T-t}(z,0)+\psi_{T-t}(z,0)V_t\big),\quad z\in\Sscr.
\]
Hence, $f(t,x,v,z)=\rme\p{zx}\exp\big(\phi_{T-t}(z,0)+\psi_{T-t}(z,0)v\big)$, so that 
\begin{equation}\label{eq:f.aff}
\partial_{x} f(t,x,v,z)=zf(t,x,v,z),\qquad\partial_{v} f(t,x,v,z)=\psi_{T-t}(z,0)f(t,x,v,z) .
\end{equation}
The process $W$ in \eqref{eq:proc.W} is now given by $W(t,x,v,z)=H(z)_{t-}(\exp(zx+\psi_{T-t}(z,0)v)-1)$. The process $\xi$ of Corollary \ref{cor:poi.br.res.smar.abs.con} becomes 

\begin{equation}\label{eq:proc.xi}
\xi_t=\xi\p0+\xi\p1V_{t-},\qquad
\xi\p i:=\gm\p i_{11}+\int_E(\rme\p{x}-1)\p2\kappa\p i(\rmd x),\quad i=0,1.
\end{equation} 
We notice that now $\xi\p i$ are constant in time, $i=0,1$. Furthermore, setting
\begin{equation}\label{eq:def.p.aff}
p\p i:=z\gm\p i_{11}+\psi_{T-t}(z,0)\gm\p i_{12}+\int_E\big(\exp(zx+\psi_{T-t}(z,0)v)-1\big)\big(\rme\p{x}-1\big)\kappa\p i(\rmd x),\quad i=0,1,
\end{equation}
from \eqref{eq:var.opt.str.abs.con}, we deduce
\begin{equation}\label{eq:thtz.aff}
\vartheta(z)_t=\frac{H(z)_{t-}}{S_{t-}}\frac{p\p0+p\p1V_{t-}}{\xi\p0+\xi\p1 V_{t-}}
\end{equation}
which is \cite[Lemma 5.2]{KalP07}. Furthermore, from Theorems~\ref{thm:hed.err} and~\ref{thm:comsec3.sec4}, the integrand $\vartheta$ in the GKW decomposition of $H$ with respect to $S$ is given by
\[
\vartheta_t=\int_\Sscr\vartheta(z)_t\zt(\rmd z)=\int_\Sscr\frac{H(z)_{t-}}{S_{t-}}\frac{p\p0+p\p1V_{t-}}{\xi\p0+\xi\p1 V_{t-}}\zt(\rmd z)
\]
which is \cite[Theorem 4.1]{KalP07}. With a straightforward computation, from \eqref{eq:res.pb.exp.abs.con} with $f\p i_t=f(t,x,v,z_i)$, $i=1,2$, we also obtain the explicit expression of $\aPP{L(z_1)}{L(z_2)}$, which is given in \cite[Eq.~(5.10), p.\ 97]{KalP07}. The process $(t,\om,z_1,z_2)\mapsto\aPP{L(z_1)}{L(z_2)}_t(\om)$ is $\Pscr\otimes\Bscr(\Sscr)\otimes\Bscr(\Sscr)$-measurable. Therefore, from \eqref{eq:for.pb.gen} we deduce the explicit expression of $\aPP{L}{L}$.

Notice that we can obtain an exact representation of $\aPP{L}{L}$, while in \cite{KalP07}, this predictable covariation is represented only as Cauchy principal value of the right-hand side of \eqref{eq:for.pb.gen}. We also stress that we are able to compute the quantities $\vartheta(z)$, $\aPP{L(z_1)}{L(z_2)}$ and $\aPP{L}{L}$ under the only assumption that $(X,V)$ is a locally square integrable semimartingale.  In \cite[Assumption 3.1(i)]{KalP07}, stronger assumptions based on analyticity properties of $\phi$ and $\psi$ are made. According to our results, these assumptions are only needed to calculate the expectation $\Ebb[\aPP{L}{L}_T]$, but not $\aPP{L}{L}_T$ itself. 
 
\section{Applications}\label{sec:app}

In this section we apply the results of Section \ref{sec:sem.quad} and \ref{sec:GKW.eu} to two continuous stochastic volatility models: The Heston model, which is an affine model in the sense of \cite{DFS03}, and the $3/2$-model, which is a non-affine model. 

We set up a variance-optimal semi-static hedging strategy for a variance swap $\et\p0$: If $(X,V)$ is the continuous semimartingale describing the stochastic volatility model, then
\begin{equation}\label{eq:var.sw}
\et\p0=[X,X]_T-k\p\textnormal{swap}=\aPP{X\p c}{X\p c}_T-k\p\textnormal{swap}.
\end{equation}
By continuity, the process $(X,V,[X,X])$ is a locally square integrable semimartingale.
The price process $S=\rme\p X$ is assumed to be a square integrable martingale.
A basket $(\et\p1,\ldots,\et\p d)$ of European options written on $S$  is fixed and we use them to implement a variance-optimal semi-static hedging strategy for $\et\p0$. We assume that each option $\et\p j$ in the basket is square integrable and such that $\et\p j=h\p j(S_T)$, where $h\p j$ can be represented as in \eqref{eq:rep.fourier} and \eqref{eq:meas.zet} on a strip $\Sscr\p j=\{z\in\Cbb: z=R\p j+\Ima(z)\}$, with $\Ebb[\exp(2R\p jX_T)]<+\infty$; $H\p j$ is the square integrable martingale associated with $\et\p j$, that is $H\p j_t:=\Ebb[\et\p j|\Fscr_t]$, and its GKW decomposition is $H\p j=H\p j_0+\vartheta\p j\cdot S+L\p j$.

\subsection{The Heston model}\label{subsec:hes.mod}
The Heston stochastic volatility model $(X,V)$ is given by
\begin{equation}\label{eq:Heston}
\rmd X_{\,t}=-\frac12 V_{\,t}\rmd t+\sqrt{V_{\,t}} \rmd W_{\,t}\p{\,1},\quad \rmd V_{\,t}=-\lambda(V_{\,t}-\kappa )\rmd t+\sigma\sqrt{V_{\,t}} \rmd W_{\,t}\p{\,2},
\end{equation}
where $(W\p{\,1}, W\p{\,2})$ is a two-dimensional correlated Brownian motion such that $\aPP{W\p{\,1}}{W\p{\,2}}_{\,t}=\rho t$, $\rho\in[-1,1]$. Typical values of $\rho$ are negative and around $-0.7$. The parameters $\lambda$, $\sigma$ and $\kappa$ are strictly positive. This model for the dynamics of $(X,V)$ is known as \emph{Heston model}, cf. \cite{H93}. Notice that $(X,V)$ is an homogeneous Markov process with respect to $\Fbb$. Furthermore, $V_t\geq0$, $t\geq0$ (cf.\ \cite[Proposition 2.1]{AP07}). 

\paragraph{Properties of Heston model. } The continuous semimartingale $(X,V)$, whose dynamic is given by \eqref{subsec:hes.mod}, is an affine process in the sense of \cite{DFS03}. Hence for $ u=(u_1,u_2)\p\top\in\Cbb\p 2$ such that $\Ebb[\exp(u_1X_T+u_2V_T)]<+\infty$, the conditional characteristic function of $(X_T,V_T)$ is given by
\begin{equation}\label{eq:joi.char}
\Ebb[\exp( u_1X_T+u_2V_T)|\Fscr_t]=\exp(\phi_{T-t}(u)+u_1 X_t+\psi_{T-t}(u)V_t)\,,\quad t\in[0,T],
\end{equation}
where $\phi,\psi:\Cbb\p2\longrightarrow\Cbb$ and $\psi$ is the solution of the following Riccati equation:
\begin{equation}\label{eq:psi.hest}
\partial_t\psi_t(u)=\textstyle\frac12\displaystyle\sig\p{\,2}\psi_t(u)\p{\,2}-(\lm-\rho\sig u_{\,1})\psi_t(u)+\textstyle\frac12\,\displaystyle \big(u_{\,1}\p{\,2}-u_{\,1}\big),\quad\psi_0(u)=u_{\,2}\,,
\end{equation}
and 
\begin{equation}\label{eq:phi.hest}
\phi_t(u)=\lm\kappa\,\int_0\p t\psi_t(u)\,\rmd s\,.
\end{equation}
The unique solution of \eqref{eq:psi.hest} exists up to an explosion time $t_{\,+}(u_{\,1})$ \emph{which can be finite}. The analytic expression of the explosion time $t_{\,+}(u_{\,1})$ is given in \cite[Proposition 3.1]{AP07} (see also \cite{FK10} or \cite[\S~6.1]{K11}). 

Considering the root
\[\Psi=\Psi(u_{\,1}):=\frac{1}{\sig\p{\,2}}\,\Big(\lm-\rho\sig u_{\,1}-\sqrt{\Delta(u_{\,1})}\Big)\]
of the characteristic polynomial of \eqref{eq:psi.hest}, it is possible to write the explicit solution of \eqref{eq:psi.hest}. The root $\Psi$ leads to a representation of $\psi$ which is continuous in the complex plane, i.e. it does not cross the negative real axis, which is the standard branch cut for the square-root function in the complex plane, and is therefore more suitable for numerical implementations (cf.\ \cite{JK05} and \cite{AMST06}). Following \cite[Proposition 4.2.1]{Al15} -- where however the complex conjugate of $\Psi$ is used -- 
the explicit expression of $\psi_t(u)$ in its interval of existence $[0,t_+(u_1))$, is

\begin{equation}\label{eq:sol.psi}
\psi_t(u):=\begin{cases}
u_{\,2}+(\Psi-u_{\,2})\,\frac{1-\exp\big(-t\sqrt{\Delta}\big)}{1-g\exp(-t\sqrt{\Delta})},&\quad \Delta(u_{\,1})\neq0;\\\\
u_{\,2}+(\Psi-u_{\,2})\p{\,2}\,\frac{\sig\p{\,2}t}{2+\sig\p{\,2}(\Psi-u_{\,2})t},&\quad \Delta(u_{\,1})=0\,,
\end{cases}
\end{equation}
where we define
\[g(u_1,u_{\,2}):=\frac{\lm-\rho\sig u_{\,1}-\sig\p{\,2}u_{\,2}-\sqrt{\Delta(u_{\,1})}}{\lm-\rho\sig u_{\,1}-\sig\p{\,2}u_{\,2}+\sqrt{\Delta(u_{\,1})}}\]
and use the conventions
\[\frac{\exp(-t\sqrt\Delta)-g}{1-g}:=1,\qquad \frac{1-\exp(t\sqrt\Delta)}{1-g\exp(t\sqrt\Delta)}:=0\]
 whenever the denominator of $g$ is equal to zero. 
Then, from \eqref{eq:phi.hest},

\begin{equation}\label{eq:sol.phi}
\phi_t(u):=\begin{cases}
\lm\kappa(\Psi+\frac{2\sqrt{\Delta}}{\sig\p{\,2}})\,t-\frac{2\lm\kappa}{\sig\p{\,2}}\,\log\frac{\exp(t\sqrt{\Delta})-g}{1-g},&\quad \Delta(u_1)\neq0;\\\\
\lm\kappa\Psi\,t-\frac{2\lm\kappa}{\sig\p{\,2}}\,\log\Big(1+\frac{\sig\p{\,2}}{2}(\Psi-{u_{\,2}})t\Big),&\quad \Delta(u_1)=0\,.
\end{cases}
\end{equation}

We observe that, from \eqref{eq:Heston}, the quadratic covariation of $X$ is
\[
[X,X]_t:=\int_0\p tV_s\rmd\, s\,,\quad t\geq0,
\]
and the process $Z:=(X,V,[X,X])\p\top$ is affine (cf.\ \cite[Lemma 4.1]{KalMV11}). Furthermore, the moment generating function of $Z_T$ exists in an open neighbourhood of the origin in $\Rbb\p 3$, say  $B_\ep(0)$, $\ep>0$, (cf.\ \cite[Theorem 10.3(b), Lemma 10.1(c)]{F09}). Therefore, $Z_T$ possesses exponential moments in $B_\ep(0)$ and hence each component has finite moments of every order.

The dynamic of the price process $S=(S_t)_{t\in[0,T]}$, $S_t:=\exp(X_t)$, is given by
\begin{equation}\label{eq:proc.S}
\rmd S_{\,t}=S_{\,t}\sqrt{V_{\,t}}\rmd W\p{\,1}_{\,t}\,.
\end{equation}
From \cite[Corollary 3.4]{KalM10}, $S$ is a martingale.  
We assume that $S$ is square integrable, that is $S_T\in L\p2(\Pbb)$. According to \cite[Theorem 2.14, Example 2.19]{KM15}, the square integrability of $S_T\p{\, z}=\exp(z\,X_{\,T})$ for $z\in\Cbb$ is equivalent to the existence up to time $T$ of the solution of the Riccati equation \eqref{eq:psi.hest} starting at $u=(2\Rea(z),0)\p\top$, that is $T<t_+(2\Rea(z))$. In particular we assume that $T<t_+(2)$.

Let $z\in\Cbb$ be such that $z=R+\rmi\Ima(z)$, where $\Ebb[\exp(2RX_T)]<+\infty$. The complex-valued process $H(z)_t=\Ebb[\exp(zX_T)|\Fscr_t]$ is a square integrable martingale.
We set 
\begin{align}&r\p 0_{z_1,z_2}:=\phi_t(z_1,0)+\phi_t(z_2,0)\label{eq:xi0}\\\nonumber\\&r\p 1_{z_1,z_2}:=\psi_t(z_1,0)+\psi_t(z_2,0)\label{eq:xi1} \,.
\end{align}

The next formula was established in \cite[Eq.~(5.18)]{KalP07}. For completeness we give the proof in the appendix.
\begin{proposition}\label{prop:mix.mom}
Let $z_j\in\Cbb$, $z_j=R\p j+\rmi\Ima(z_j)$, with $R\p j\in \Rbb$ such that $\Ebb[\exp(2R\p jX_T)]<+\infty$, $j=1,2$. Then 
\begin{equation}\label{eq:mix.mom}
\begin{split}
\Ebb[V_tH(z_1)_tH(z_2)_t]=S_0\p{z_1+z_2}&\rme\p{r\p 0_{z_1,z_2}}\times\\&\exp\big(\phi_t(z_1+z_2,r\p 1_{z_1,z_2})+\psi_t(z_1+z_2,r\p 1_{z_1,z_2})V_0\big)\times\\&\qquad\partial_{u_2}\Big[\phi_t(z_1+z_2,u_2)+\psi_t(z_1+z_2,u_2)V_0\Big]\Big|_{u_2=r\p 1_{z_1,z_2}}.
\end{split}
\end{equation}
\end{proposition}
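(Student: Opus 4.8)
The plan is to apply the affine structure of $(X,V)$ twice: once to collapse the product $H(z_1)_tH(z_2)_t$ into a single exponential-affine functional of $(X_t,V_t)$, and once to evaluate $\Ebb[V_t\,\cdot\,]$ of that functional by differentiating the affine transform in the volatility variable. The factor $V_t$ is exactly what a single derivative $\partial_{u_2}$ produces, which explains why \eqref{eq:mix.mom} carries precisely one such derivative.

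First I would substitute the affine representation $H(z)_t=\exp\big(\phi_{T-t}(z,0)+zX_t+\psi_{T-t}(z,0)V_t\big)$, obtained from \eqref{eq:joi.char} with second argument $u_2=0$, into the product. As the $\phi$- and $\psi$-terms are deterministic, this yields the pathwise identity $H(z_1)_tH(z_2)_t=\rme\p{r\p0}\exp\big((z_1+z_2)X_t+r\p1V_t\big)$, with $r\p0$ and $r\p1$ the deterministic quantities collected in \eqref{eq:xi0}--\eqref{eq:xi1}. Factoring the deterministic constant $\rme\p{r\p0}$ out of the expectation reduces the claim to computing $\Ebb\big[V_t\exp(u_1X_t+u_2V_t)\big]$ at $(u_1,u_2)=(z_1+z_2,r\p1)$.

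Next I would use $V_t\exp(u_1X_t+u_2V_t)=\partial_{u_2}\exp(u_1X_t+u_2V_t)$ so that, once differentiation and expectation are interchanged, the target equals $\partial_{u_2}M(u_1,u_2)$ for $M(u_1,u_2):=\Ebb[\exp(u_1X_t+u_2V_t)]$. By time-homogeneity, the affine transform \eqref{eq:joi.char} applied over $[0,t]$ (with $\Fscr_0$ trivial and $X_0=\log S_0$) gives $M(u_1,u_2)=S_0\p{u_1}\exp\big(\phi_t(u_1,u_2)+\psi_t(u_1,u_2)V_0\big)$. Differentiating by the chain rule produces $M(u_1,u_2)\,\partial_{u_2}\big[\phi_t(u_1,u_2)+\psi_t(u_1,u_2)V_0\big]$; evaluating at $(z_1+z_2,r\p1)$ and restoring $\rme\p{r\p0}$ gives \eqref{eq:mix.mom} after collecting terms.

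The delicate point, and the main obstacle, is the interchange of $\partial_{u_2}$ with $\Ebb[\cdot]$, equivalently the holomorphy of $u_2\mapsto M(z_1+z_2,u_2)$ on a complex neighbourhood of $r\p1$. By the analyticity theory of complex moment generating functions this is legitimate as soon as $M(z_1+z_2,\cdot)$ is finite on an open set containing $r\p1$, which then licenses differentiation under the expectation via dominated convergence. To secure this finiteness I would dominate $|V_tH(z_1)_tH(z_2)_t|$ by $V_tH(R\p1)_tH(R\p2)_t$, using the bound $|H(z_j)_t|\leq H(R\p j)_t$; this reduces the matter to finiteness of a real $(X_t,V_t)$-transform at an argument controlled by $\psi_{T-t}(R\p1,0)+\psi_{T-t}(R\p2,0)$, with a margin large enough to host a genuine neighbourhood. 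That finiteness I would extract from the explicit Riccati solution \eqref{eq:sol.psi}--\eqref{eq:sol.phi} together with the standing assumptions $T<t_+(2)$ and $\Ebb[\exp(2R\p jX_T)]<+\infty$ and the finite-moment properties of $Z_T=(X_T,V_T,[X,X]_T)$ recorded above. This integrability-and-analyticity verification is where the real work lies; the remaining chain-rule bookkeeping is routine.
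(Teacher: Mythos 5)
Your proposal is correct and follows the paper's own proof essentially step for step: collapse the product $H(z_1)_tH(z_2)_t$ into a single exponential-affine functional of $(X_t,V_t)$ via \eqref{eq:joi.char}, write the factor $V_t$ as $\partial_{u_2}$ of that functional, justify the interchange of derivative and expectation by an integrable domination on a complex neighbourhood of $r\p1_{z_1,z_2}$, and then differentiate the affine transform formula. The only minor difference is in how the neighbourhood of finiteness is secured: the paper invokes the openness of the set $M(t)=\{(u_1,u_2):\Ebb[\rme\p{u_1X_t+u_2V_t}]<+\infty\}$ (via \cite{F09}, in Lemma~\ref{lem:int.mix.mom}) together with the elementary bound $V_t\leq\ep\p{-1}\rme\p{\ep V_t}$, whereas you propose to extract the margin from the explicit Riccati solutions; both routes close the same gap.
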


\paragraph{Semi-Static Variance-Optimal Hedging. }We now discuss the inner variance-optimal problem in \eqref{eq:setup} for the variance swap $\et\p 0$ in Heston model and compute the quantities $A$, $B$ and $C$ defined in \eqref{eq:hed.err}. For notation we refer to Corollary \ref{cor:poi.br.res.smar.abs.con} (ii). Setting
\[
\rmd Q_t:=\rmd\aPP{V\p c}{V\p c}_t-\frac{\rmd\aPP{X\p c}{V\p c}_t}{\rmd\aPP{X\p c}{X\p c}_t}\rmd \aPP{X\p c}{V\p c}_t=\left(c_t\p{22}-\frac{c\p{12}_t}{c\p{11}_t}c\p{12}_t\right)\rmd t,
\]
 \eqref{eq:Heston} yields
\[
c_t\p{11}=V_t,\quad c_t\p{12}=\rho\sig V_t,\quad c_t\p{22}=\sig\p2V_t,\quad c_t\p{j3}=0,\quad j=1,2,3,
\]
and hence
\begin{equation}\label{eq:def.Q.Hes}
\rmd Q_t=\sig\p2(1-\rho\p2)V_t\rmd t.
\end{equation}

We recall that for $\et\p0\in L\p2(\Pbb)$ defined as in \eqref{eq:var.sw}, we define the square integrable martingale $H\p0=(H\p0_t)_{t\in[0,T]}$ by $H\p0_t:=\Ebb[\et\p0|\Fscr_t]$, $t\in[0,T]$.
\begin{proposition}\label{prop:A.hes.mod}
Let $A:=\Ebb[\aPP{L\p 0}{L\p 0}_T]$, $L\p 0$ being the residual in the GKW decomposition of $H\p0$ with respect to $S$. Then
\[
A=\sig\p2(1-\varrho\p2)\int_0\p T\al\p2(t)[(V_0-\kappa)\rme\p{-\lm t}+\kappa]\,\rmd t,
\]
where $\al(t):=\frac{1}{\lm}\,\big(1-\rme\p{-\lm(T-t)}\big)$.
\end{proposition}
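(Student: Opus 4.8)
The plan is to identify $H\p0$ as a smooth function of the factor process $(X,V,[X,X])$ and then invoke the formula of Corollary~\ref{cor:poi.br.res.smar.abs.con}(ii) for a continuous semimartingale with absolutely continuous characteristics. First I would compute the martingale $H\p0_t=\Ebb[\et\p0\mid\Fscr_t]$ explicitly. Writing $\et\p0=\int_0\p T V_s\,\rmd s-k^{\textnormal{swap}}$ from \eqref{eq:var.sw}, splitting the integral at $t$, and using the mean-reversion of the CIR volatility, $\Ebb[V_s\mid\Fscr_t]=\kappa+(V_t-\kappa)\rme\p{-\lm(s-t)}$ for $s\ge t$, integration over $[t,T]$ gives
\begin{equation*}
H\p0_t=[X,X]_t+(V_t-\kappa)\al(t)+\kappa(T-t)-k^{\textnormal{swap}}=f(t,X_t,V_t,[X,X]_t),
\end{equation*}
where $f(t,x,v,w):=w+(v-\kappa)\al(t)+\kappa(T-t)-k^{\textnormal{swap}}$ and $\al(t)=\frac1\lm(1-\rme\p{-\lm(T-t)})$.

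Since $f$ is affine in $(x,v,w)$ it lies trivially in $C\p{1,2}$, and its partial derivatives are $\partial_x f=0$, $\partial_v f=\al(t)$ and $\partial_w f=1$. Next I would apply Corollary~\ref{cor:poi.br.res.smar.abs.con}(ii) with $f\p1=f\p2=f$ to the three-dimensional factor $(X\p1,X\p2,X\p3)=(X,V,[X,X])$. In the sum \eqref{eq:res.pb.exp.abs.con} the indices run over $j,k\ge2$, so only $\partial_v f$ and $\partial_w f$ enter; moreover $[X,X]$ is of finite variation, whence $c\p{j3}=0$ and every term carrying the index $3$ drops out. Only the $(j,k)=(2,2)$ term survives, giving
\begin{equation*}
\frac{\rmd\aPP{L\p0}{L\p0}_t}{\rmd t}=\al(t)\p2\Big(c_t\p{22}-\tfrac{(c_t\p{12})\p2}{c_t\p{11}}\Big)=\al(t)\p2\,\sig\p2(1-\rho\p2)V_t,
\end{equation*}
where I have substituted the Heston coefficients $c\p{11}_t=V_t$, $c\p{12}_t=\rho\sig V_t$, $c\p{22}_t=\sig\p2 V_t$; this is exactly $\al(t)\p2$ times the integrand $\rmd Q_t$ of \eqref{eq:def.Q.Hes}.

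Finally I would integrate over $[0,T]$ and take the expectation, interchanging the two operations by Tonelli's theorem since the integrand is nonnegative (recall $V_t\ge0$). Using the first-moment formula $\Ebb[V_t]=\kappa+(V_0-\kappa)\rme\p{-\lm t}$ then yields
\begin{equation*}
A=\Ebb[\aPP{L\p0}{L\p0}_T]=\sig\p2(1-\rho\p2)\int_0\p T\al(t)\p2\big[(V_0-\kappa)\rme\p{-\lm t}+\kappa\big]\,\rmd t,
\end{equation*}
which is the assertion (with $\varrho=\rho$). The only step demanding genuine care is integrability: one must confirm that $\et\p0\in L\p2(\Pbb)$ and that $H\p0$ is a square integrable martingale, so that the GKW machinery of Section~\ref{sec:sem.quad} and Corollary~\ref{cor:poi.br.res.smar.abs.con} are applicable. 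This follows from the earlier observation that $Z=(X,V,[X,X])$ is affine and possesses exponential moments in a neighbourhood of the origin, so all polynomial moments of its components---in particular those of $V_t$ and of $\int_0\p T V_s\,\rmd s$---are finite. Everything else is routine bookkeeping.
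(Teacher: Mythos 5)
Your proposal is correct and follows essentially the same route as the paper: identify $H\p0_t$ as an affine function $f\p0(t,V_t,[X,X]_t)$ with $\partial_v f\p0=\al(t)$, apply Corollary~\ref{cor:poi.br.res.smar.abs.con}(ii) to the continuous factor $(X,V,[X,X])$ so that only the $(2,2)$ term survives, and integrate $\Ebb[V_t]=\kappa+(V_0-\kappa)\rme\p{-\lm t}$ against $\al\p2(t)$. The only cosmetic difference is that you quote the CIR conditional-mean formula directly, whereas the paper derives $\Ebb[V_t]$ from the mixed-moment formula of Proposition~\ref{prop:mix.mom} before invoking the homogeneous Markov property; both are valid.
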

\begin{proof}
The process $(X,V,[X,X])$ is a square integrable semimartingale and the random variables $X_t$, $V_t$ and $[X,X]_t$ have finite moments of every order, for every $t\leq T$. We first show that the formula $H\p0_t=f\p0(t,V_t,[X,X]_t)$ holds, where the function $(t,v,w)\mapsto f\p0(t,v,w)$ has to be determined. From Proposition \ref{prop:mix.mom},
we get
\begin{equation}\label{eq:mom.V.aux}
\Ebb[V_t]=\Ebb[H(0)_tH(0)_tV_t]=\partial_{u_2}\big[\phi_t(0,u_2)+\psi_t(0,u_2)V_0\big]\Big|_{u_2=0}.
\end{equation}
Notice that $\Delta(0)=\lm>0$. Hence, to compute the derivative in \eqref{eq:mom.V.aux}, we take the expressions of $\phi$ and $\psi$ in \eqref{eq:sol.psi} and \eqref{eq:sol.phi} for $\Delta(u_1)\neq0$.
By direct computation we then get
\[
\partial_{u_2}\phi_t(0,u_2)\big|_{u_2=0}=\kappa(1-\rme\p{-\lm t}) \,,\quad \partial_{u_2}\psi_t(0,u_2)\big|_{u_2=0}=\rme\p{-\lm t}
\]
and therefore \eqref{eq:mom.V.aux} becomes
\begin{equation}\label{eq:mom.V}
\Ebb[V_t]=\kappa(1-\rme\p{-\lm t})+\rme\p{-\lm t}V_0.
\end{equation}
Now, using the homogeneous Markov property of $V$ with respect to the filtration $\Fbb$, we get
\[
\Ebb[V_s|\Fscr_t]=\kappa(1-\rme\p{-\lm (s-t)})+\rme\p{-\lm (s-t)}V_t,\quad s\geq t.
\]
Therefore, by Fubini's theorem,
\[
H\p0_t+k\p\textnormal{swap}=\Ebb[[X,X]_T|\Fscr_t]=\int_0\p tV_s\rmd s+\int_t\p T\big[\kappa\big(1-\rme\p{-\lm (s-t)}\big)+\rme\p{-\lm (s-t)}V_t\big]\rmd s
\]
and hence
\[
H\p0_t+k\p\textnormal{swap}=\bt(t)+\al(t)V_t+[X,X]_t,\quad t\in[0,T],
\]
where 
\[
\al(t):=\frac{1}{\lm}\,\big(1-\rme\p{-\lm(T-t)}\big),\qquad\bt(t):=\frac{\kappa}{\lm}\big(\lm(T-t)-1+\rme\p{-\lm(T-t)}\big),\quad t\in[0,T].
\]
So, we see that $H\p0_t+k\p\textnormal{swap}=f\p0(t,V_t,[X,X]_t)$ where 
\begin{equation}\label{eq:fun.f0}
f\p0(t,v,w)=\bt(t)+\al(t)v+w;\quad  \partial_{v}f\p0=\al(t).
\end{equation}
 By Corollary \ref{cor:poi.br.res.smar.abs.con} (ii), \eqref{eq:res.pb.exp.abs.con} and \eqref{eq:def.Q.Hes} we get
\[
\aPP{L\p0}{L\p0}_T=\int_0\p T\al\p2(t)\rmd Q_t=\sig\p2(1-\rho\p2)\int_0\p T\al\p2(t)V_t\rmd t.
\]
Hence
\[
A=\sig\p2(1-\rho\p2)\int_0\p T\al\p2(t)\Ebb[V_t]\rmd t.
\]
The explicit computation of $\Ebb[V_t]$ is given in \eqref{eq:mom.V} and the proof is complete.
\end{proof}

As a next step, we compute the vector $B$ in \eqref{not:var.cov}. Recall that, if $z\in\Sscr\p j$, then $\Ebb[\rme\p {zX_T}]<+\infty$. Therefore, the solution of the Riccati equation starting at $(z,0)$ exists up to time $T$, (cf.\ \cite{KM15}).

\begin{remark}\label{rem:app.main.thm} We remark that, because of the affine formula \eqref{eq:char.fun.aff.pr} and by continuity of (X,V) $(t,\om,z)\mapsto H(\om,z)_t$ is $\Pscr\otimes\Bscr(\Sscr\p j)$-measurable, for every $j=1,\ldots,d$. Therefore, from Theorems~\ref{thm:hed.err} and~\ref{thm:comsec3.sec4} we get
\[
\vartheta\p j=\int_{\Sscr\p j}\frac{H(z)_t}{S_t}\big(z+\sig\rho\psi_{T-t}(z,0)\big)\zt\p j(\rmd z).
\]
\end{remark}
Notice that in the following two propositions we can consider the determination of $\psi$ and $\phi$ in \eqref{eq:sol.psi} and \eqref{eq:sol.phi} for $\Delta(u_1)\neq0$. Indeed, there exist only two complex numbers $u_1\p j$, $j=1,2$, such that $\Delta(u_1\p j)=0$ and, by assumption, $\zt$ is a non-atomic measure.

\begin{proposition}\label{prop:comp.B}
Let $B\in\Rbb\p d$ be defined as in \eqref{not:var.cov}. Then, for $j=1,\ldots,d$,
\[
B\p j=\sig\p2(1-\varrho\p2)\int_0\p T\int_{\Sscr\p j}\al(t)\Ebb\big[ H(z)_tV_t\big]\psi_{T-t}(z,0)\zt\p j(\rmd z)\rmd t
\]
where
\[
\Ebb[H(z)_tV_t]=S_0\p{z}\rme\p{r\p 0_{z,0}}\exp\big(\phi_t(z,r\p 1_{z,0})+\psi_t(z,r\p 1_{z,0})V_0\big)\partial_{u_2}\Big[\phi_t(z,u_2)+\psi_t(z,u_2)V_0\Big]\Big|_{u_2=r\p 1_{z,0}}
\]
\end{proposition}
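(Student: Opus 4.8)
The plan is to treat this as a \emph{mixed} covariation computation. The residual $L\p0$ comes from the variance swap, whose martingale was shown in the proof of Proposition~\ref{prop:A.hes.mod} to have the factor-model form $H\p0_t+k\p\textnormal{swap}=f\p0(t,V_t,[X,X]_t)$ with $f\p0(t,v,w)=\bt(t)+\al(t)v+w$, whereas $L\p j$ is the residual of the European claim $\et\p j$, which by \eqref{eq:hed.error} admits the Fourier representation $L\p j=\int_{\Sscr\p j}L(z)\,\zt\p j(\rmd z)$. First I would compute the pointwise predictable covariation $\aPP{L\p0}{L(z)}$ for a fixed $z\in\Sscr\p j$ via the continuous factor-model formula of Corollary~\ref{cor:poi.br.res.smar.abs.con}(ii); then I would integrate against $\zt\p j$ over the strip using Proposition~\ref{prop:poi.br.com}, and finally insert the explicit moment of Proposition~\ref{prop:mix.mom}.

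For the pointwise step I would apply \eqref{eq:res.pb.exp.abs.con} to the three-dimensional factor process $(X,V,[X,X])$ with $f\p1=f\p0$ and $f\p2=f(\cdot,z)$, where $H(z)_t=f(t,X_t,V_t,z)$. The relevant derivatives are $\partial_v f\p0=\al(t)$ and $\partial_w f\p0=1$, while from \eqref{eq:f.aff} one has $\partial_x f(z)=z\,H(z)$, $\partial_v f(z)=\psi_{T-t}(z,0)H(z)$ and $\partial_w f(z)=0$. Since the Heston differential characteristics satisfy $c\p{j3}_t=0$ for $j=1,2,3$, every term in the double sum of \eqref{eq:res.pb.exp.abs.con} carrying the index $3$ vanishes, and the only surviving contribution is the $(j,k)=(2,2)$ term. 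Using $c\p{22}_t-(c\p{12}_t)\p2/c\p{11}_t=\sig\p2(1-\varrho\p2)V_t=\rmd Q_t/\rmd t$ (cf.\ \eqref{eq:def.Q.Hes}), this yields
\[
\rmd\aPP{L\p0}{L(z)}_t=\sig\p2(1-\varrho\p2)\,\al(t)\,\psi_{T-t}(z,0)\,H(z)_t\,V_t\,\rmd t .
\]

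To obtain $B\p j=\Ebb[\aPP{L\p0}{L\p j}_T]$ I would integrate over the strip. With $Z=L\p0$, $U(z)=L(z)$, $\zt=\zt\p j$ and the absolutely continuous integrand $K(z)_s:=\sig\p2(1-\varrho\p2)\al(s)\psi_{T-s}(z,0)H(z)_sV_s$, Proposition~\ref{prop:poi.br.com}(iii) gives directly $\aPP{L\p0}{L\p j}_T=\int_0\p T\int_{\Sscr\p j}K(z)_s\,\zt\p j(\rmd z)\,\rmd s$; taking expectations and applying Fubini's theorem (legitimate by the integrability bound \eqref{eq:est.var}) lets me interchange $\Ebb$, the $\rmd t$-integral and the $\zt\p j$-integral, producing the claimed formula for $B\p j$. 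For the explicit $\Ebb[H(z)_tV_t]$ I would invoke Proposition~\ref{prop:mix.mom} with $z_1=z$ and $z_2=0$: since $H(0)\equiv1$ and $\psi_t(0,0)=\phi_t(0,0)=0$ (the Riccati system \eqref{eq:psi.hest}--\eqref{eq:phi.hest} started at the origin has the trivial solution), the auxiliary quantities reduce to $r\p0_{z,0}=\phi_t(z,0)$ and $r\p1_{z,0}=\psi_t(z,0)$, and \eqref{eq:mix.mom} collapses to exactly the stated expression.

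The algebraic reduction of the $(2,2)$ term and the specialisation of Proposition~\ref{prop:mix.mom} are routine. The only genuine technical obstacle is the justification of the covariation/Fourier interchange and of Fubini's theorem; this is, however, already secured by Theorem~\ref{thm:comsec3.sec4}, which shows that the affine structure and the continuity of $(X,V)$ make the relevant mappings jointly predictable, hence jointly measurable, so that the process $D$ of Proposition~\ref{prop:poi.br.com} is predictable, while the square-integrability assumption $T<t_+(2R\p j)$ supplies the uniform $L\p2$-bounds of Proposition~\ref{prop:H.mart} needed for \eqref{eq:est.var}. With these hypotheses verified, the interchange is legitimate and the computation goes through.
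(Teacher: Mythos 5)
Your proposal is correct and follows essentially the same route as the paper: compute $\rmd\aPP{L\p0}{L(z)}_t=\sig\p2(1-\varrho\p2)\al(t)\psi_{T-t}(z,0)H(z)_tV_t\,\rmd t$ via the continuous factor-model formula \eqref{eq:res.pb.exp.abs.con}, integrate over the strip with Proposition~\ref{prop:poi.br.com}(iii) after securing joint measurability through Theorem~\ref{thm:comsec3.sec4} and the uniform $L\p2$-bounds, then exchange expectation and integrals by Fubini and insert Proposition~\ref{prop:mix.mom} with $z_1=z$, $z_2=0$. The only (harmless) difference is that you additionally spell out why the index-$3$ terms drop out and why $r\p0_{z,0}$, $r\p1_{z,0}$ simplify, which the paper leaves implicit.
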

\begin{proof} The components of the vector $B=(B\p1,\ldots,B\p d)\p\top$ are $B\p j=\Ebb[\aPP{L\p 0}{L\p j}_T]$, $j=1,\ldots,d$, where $L\p j$ is the residual in the GKW decomposition of $H\p j$ with respect to $S$, $j=1,\ldots,d$. We start computing $\aPP{L\p0}{L(z)}$, where $L\p0$ and $L(z)$ are the residuals in the GKW-decomposition of $H\p0$ and $H(z)$ (see \eqref{eq:gkw dec.H.Hz}) with respect to $S$, respectively. From \eqref{eq:fun.f0} and \eqref{eq:f.aff}, according to \eqref{eq:res.pb.exp.abs.con} (applied now to the complex-valued square integrable martingale $L(z)$) and \eqref{eq:def.Q.Hes}, we deduce
\[
\begin{split}
\rmd\aPP{L\p0}{L(z)}_t&=\partial_{v}f(t,X_t,V_t,z)\partial_{v}f\p0(t,X_t,V_t,[X,X]_t)\rmd Q_t\\&=\sig\p2(1-\rho\p2)\al(t)\psi_{T-t}(z,0)H(z)_tV_t\rmd t.\end{split}
\]
Using the affine formula \eqref{eq:joi.char}, the process $K(z)_t:=\sig\p2(1-\rho\p2)\al(t)\psi_{T-t}(z,0)H(z)_tV_t$ is jointly progressively measurable. Therefore, from Theorem \ref{thm:comsec3.sec4}  we can apply Theorem \ref{thm:hed.err} and deduce
\[
L\p j=\int_{\Sscr\p j}L(z)\zt\p j(\rmd z).
\]
Hence, because $\sup_{z\in\Sscr}\Ebb[|L(z)_T|\p2]\leq\sup_{z\in\Sscr}\Ebb[|H(z)_T|\p2]<+\infty$, Proposition \ref{prop:poi.br.com} (iii) yields
\begin{equation}\label{eq:pb.L0.Lj.hest}
\aPP{L\p0}{L\p j}_T=\sig\p2(1-\rho\p2)\int_0\p T\int_{\Sscr\p j}\al(t)\psi_{T-t}(z,0)H(z)_tV_t\zt\p j(\rmd z)\rmd t\,,\quad j=1,\ldots,d.
\end{equation}
To compute $B\p j$, we apply Theorem \ref{thm:comsec3.sec4}, Proposition \ref{prop:poi.br.com} (ii) and Fubini's theorem to exchange expectation and integrals on the right-hand side of \eqref{eq:pb.L0.Lj.hest}. The explicit expression of $\Ebb[H(z)_tV_t]$ is now given by Proposition \ref{prop:mix.mom} setting $z_1=z$ and $z_2=0$. The proof is now complete
\end{proof}

As a last step, we compute the covariance matrix $C$.
\begin{proposition}\label{prop:cov.mat.C}
Let $C=(C\p{ij})_{i,j=1,\ldots,d}$ be defined as in \eqref{not:var.cov}. Then, for $i,j=1,\ldots,n$,
\[\begin{split}
C\p{ij}=\sig\p2(1-\varrho\p2)\int_0\p T\int_{\Sscr\p i}\int_{\Sscr\p j}\psi_{T-t}(z_1,0)\psi_{T-t}(z_2,0)\Ebb\big[&H(z_1)_tH(z_2)_tV_t\big]\zt\p j(\rmd z_2)\zt\p i(\rmd z_1)\rmd t\,,
\end{split}\]
where the explicit expression of $\Ebb\big[H(z_1)_tH(z_2)_tV_t\big]$ is given by Proposition \ref{prop:mix.mom}.
\end{proposition}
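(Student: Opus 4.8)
The plan is to run the same argument as in the proof of Proposition~\ref{prop:comp.B}, but now for a \emph{pair} of complex parameters $z_1\in\Sscr\p i$ and $z_2\in\Sscr\p j$ instead of the single parameter $z$ paired with the variance-swap residual $L\p0$. First I would compute the predictable covariation $\aPP{L(z_1)}{L(z_2)}$ of the GKW-residuals of the complex-valued martingales $H(z_1)$ and $H(z_2)$ with respect to $S$. Since the Heston factor $(X,V)$ is continuous and $H(z)_t=f(t,X_t,V_t,z)$ depends only on $(X,V)$ (so that $n=2$), the double sum in \eqref{eq:res.pb.exp.abs.con} of Corollary~\ref{cor:poi.br.res.smar.abs.con}(ii), applied to complex-valued square integrable martingales exactly as in Proposition~\ref{prop:comp.B}, collapses to the single term $j=k=2$, i.e.\ to the $V$-derivatives. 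Inserting $\partial_v f(t,x,v,z)=\psi_{T-t}(z,0)f(t,x,v,z)$ from \eqref{eq:f.aff} and using that $c_t\p{22}-(c_t\p{12})\p2/c_t\p{11}=\sig\p2(1-\rho\p2)V_t$ by \eqref{eq:def.Q.Hes}, I obtain
\[
\rmd\aPP{L(z_1)}{L(z_2)}_t=\sig\p2(1-\rho\p2)\,\psi_{T-t}(z_1,0)\psi_{T-t}(z_2,0)\,H(z_1)_tH(z_2)_tV_t\,\rmd t.
\]

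Next I would set $K(z_1,z_2)_t:=\sig\p2(1-\rho\p2)\psi_{T-t}(z_1,0)\psi_{T-t}(z_2,0)H(z_1)_tH(z_2)_tV_t$ and note that, thanks to the affine representation \eqref{eq:joi.char} of $H(z)$ and the continuity of $(X,V)$, the mapping $(t,\om,z_1,z_2)\mapsto K(z_1,z_2)_t(\om)$ is jointly progressively measurable. Theorem~\ref{thm:comsec3.sec4} ensures that the hypotheses of Theorem~\ref{thm:pred.cov.mixt} hold, so part~(ii) of that theorem yields the representation $\aPP{L\p i}{L\p j}=\int_0\p\cdot\int_{\Sscr\p i}\int_{\Sscr\p j}K(z_1,z_2)_s\,\zt\p j(\rmd z_2)\,\zt\p i(\rmd z_1)\,\rmd s$ together with its expectation \eqref{eq:for.pb.gen.exp}. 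Evaluating at $t=T$ and moving the expectation inside the $\rmd t\,\zt\p j\,\zt\p i$-integral gives
\[
C\p{ij}=\Ebb[\aPP{L\p i}{L\p j}_T]=\sig\p2(1-\rho\p2)\int_0\p T\int_{\Sscr\p i}\int_{\Sscr\p j}\psi_{T-t}(z_1,0)\psi_{T-t}(z_2,0)\,\Ebb[H(z_1)_tH(z_2)_tV_t]\,\zt\p j(\rmd z_2)\,\zt\p i(\rmd z_1)\,\rmd t,
\]
and the mixed moment $\Ebb[H(z_1)_tH(z_2)_tV_t]$ is exactly the quantity whose closed form is supplied by Proposition~\ref{prop:mix.mom}; substituting it finishes the proof.

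The routine part is the covariation computation, which is a two-line specialisation of the earlier formulas. The delicate point, and the only real obstacle, is justifying that all the hypotheses of Theorem~\ref{thm:pred.cov.mixt} are met and that the interchange of $\Ebb$ with the double complex and time integration (i.e.\ the Fubini step leading to \eqref{eq:for.pb.gen.exp}) is legitimate. For this I would appeal to the uniform square-integrability $\sup_{z\in\Sscr\p j}\Ebb[|H(z)_T|\p2]<+\infty$ on each strip (valid by the argument of Proposition~\ref{prop:H.mart} under the standing assumption $T<t_+(2R\p j)$) and to the finiteness of the mixed moments, which is built into Proposition~\ref{prop:mix.mom} and the fact that $V_t$ has finite moments of every order. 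These feed the integrability bound \eqref{eq:joi.bb.var} of Theorem~\ref{thm:pred.cov.mixt}(i), which in turn licenses both the representation \eqref{eq:for.pb.gen} and the Fubini exchange; once this is in place, the explicit Heston expressions do the rest.
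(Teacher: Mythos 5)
Your proposal is correct and follows essentially the same route as the paper: the covariation $\rmd\aPP{L(z_1)}{L(z_2)}_t=\sigma^2(1-\rho^2)\psi_{T-t}(z_1,0)\psi_{T-t}(z_2,0)H(z_1)_tH(z_2)_tV_t\,\rmd t$ via Corollary~\ref{cor:poi.br.res.smar.abs.con}(ii) and \eqref{eq:f.aff}, joint measurability from the affine formula, Theorem~\ref{thm:comsec3.sec4} to license Theorems~\ref{thm:hed.err} and~\ref{thm:pred.cov.mixt}(ii), and finally Proposition~\ref{prop:mix.mom} for the mixed moment. The paper's proof is the same argument in the same order, so nothing further is needed.
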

\begin{proof}
By definition, we have $C\p {ij}=\Ebb\big[\aPP{L\p i}{L\p j}_T]$, $j=1,\ldots, n$.
Furthermore, from \eqref{eq:res.pb.exp.abs.con}, we get
\[
\begin{split}
\rmd\aPP{L(z_1)}{L(z_2)}_t&=\partial_{v}f(t,X_t,V_t,z_1)\partial_{v}f(t,X_t,V_t,z_2)\rmd Q_t
\\&=\sig\p2(1-\varrho\p2)\psi_{T-t}(z_1,0)\psi_{T-t}(z_2,0)H(z_1)_tH(z_2)_tV_t\rmd t
\\&=:K(z_1,z_2)\rmd t,
\end{split}
\]
where $f$ is given in \eqref{eq:f.aff}. By the affine formula, $K(z_1,z_2)$ is a jointly predictable process. From Theorem \ref{thm:comsec3.sec4} and Theorem \ref{thm:hed.err} we deduce
\[
L\p i=\int_{\Sscr\p i}L(z)\zt\p i(\rmd z),\qquad L\p j=\int_{\Sscr\p j}L(z)\zt\p j(\rmd z),\quad i,j=1,\ldots,d.
\]
Theorem \ref{thm:pred.cov.mixt} (ii) now yields
\[
\aPP{L\p i}{L\p j}_T=\sig\p2(1-\varrho\p2)\int_0\p T\int_{\Sscr\p i}\int_{\Sscr\p j}\psi_{T-t}(z_1,0)\psi_{T-t}(z_2,0)H(z_1)_tH(z_2)_tV_t\zt\p j(\rmd z_2)\zt\p i(\rmd z_1)\rmd t.
\]
From Proposition \ref{prop:poi.br.com} (ii) and Proposition \ref{prop:mix.mom}, the claim of the proposition follows and the proof is complete.

\end{proof}

\subsection{The 3/2-Model}\label{ssec:3/2.mod}
We consider the bivariate continuous stochastic volatility model described by the continuous semimartingale $(X,V)$, where
\begin{equation}\label{eq:3/2.mod}
\rmd X_t=-\frac{V_t}{2}\rmd t+\sqrt{V_t}\rmd W_t\p1,\qquad \rmd V_t=V_t(\lm-\kappa V_t)\rmd t+\sig V_t\p{3/2}\rmd W\p  2_t\,,\quad V_0>0.
\end{equation}
This model is usually called the 3/2-model and has been considered e.\ g.\ in \cite{lewis2000option, CS07}. As in the Heston model,  $(W\p{\,1}, W\p{\,2})$ is a correlated two-dimensional Brownian motion with predictable covariation $\aPP{W\p{\,1}}{W\p{\,2}}_{\,t}=\rho t$, $\rho\in[-1,1]$. To have a well-defined model, we assume $\kappa\geq-\sig\p2/2$ (see \cite[Eq.~(3)]{D12}) which ensures non-explosion of $V$ in finite time. Notice that the non-explosion condition is always satisfied whenever $\kappa>0$, as we assume. Under the non-explosion condition $V_t>0$ for all $t\in[0,T]$. Note that the two dimensional semimartingale $(X,V)$ given by \eqref{eq:3/2.mod} is \emph{not an affine process}.

For the computation of the conditional moment generating function we mainly refer to \cite{G16}, that is we regard $3/2$-model as a special case of the so-called $4/2$-model. More precisely, setting $a=0$, $b=1$ and $r=0$ in \cite[Eq.~(2.1)]{G16}, we get the dynamics of the price process $S=\rme\p X$ in the $3/2$-model under the local martingale measure,  and hence of $X=\log(S)$. We stress that, denoting by $\kappa_G, \theta_G, \sig_G$ the parameters in \cite[Eq.~(2.2)]{G16}, the relation between $\lm$, $\kappa$ and $\sig$ in \eqref{eq:3/2.mod} and $\kappa_G, \theta_G, \sig_G$ is
\begin{equation}\label{eq:par.gra.rel}
\kappa=\kappa_G\theta_G-\sig\p2_G,\quad \lm=\kappa_G,\quad \sig=-\sig_G.
\end{equation}

In \cite[Proposition 6.3.2.1]{JCY09} the transition density of the process process $R=(R_t)_{t\in[0,T]}$, $R_t:=1/V_t$ is given. Therefore, denoting by $f_{R_t}(\cdot)$ and $f_{V_t}(\cdot)$ the density function of the distribution of $R_t$ and $V_t$, respectively, and using the relation $f_{V_t}(v)=x\p{-2}f_{R_t}(v\p{-1})$, we obtain the density function of $V_t$, which is given by $f_{V_t}(v)=0$ for $v\leq0$ and for $v>0$
\begin{equation}\label{eq:den.fun.V_t}
\begin{split}
f_{V_t}(v)=\frac{1}{2v\p2}\,\frac{4\lm}{\sig\p2(\rme\p{\lm t}-1)}\,\exp\bigg(\lm\Big(1+\frac{ q}2\Big)t-\frac{2\lm}{\sig\p2(\rme\p{\lm t}-1)}\Big(\frac{1}{V_0}+\frac{\rme\p{\lm t}}{v}&\Big)\bigg)\bigg(\frac{V_0}v\bigg)\p{q/2}\times\\&I_q\bigg(\frac{4\lm\rme\p{\frac{\lm}{2}t}}{\sig\p2(\rme\p{\lm t}-1)\sqrt{V_0v}}\bigg),
\end{split}
\end{equation} 
where $q:=2(\ka+\sig\p2)/\sig\p2-1$ and $I_p(\cdot)$ denotes the modified Bessel function of the first kind. Knowing the density of $V_t$, we can compute the (non-integer) moments of $V_t$. 

From \cite[p17]{G16}, denoting by $\M(a,b,\cdot)$ the confluent hypergeometric function, we get
\begin{equation}\label{eq:mom.v_t}
\begin{split}
\Ebb\big[V_t\p{\et}\big]=\left(\frac{2\lm\rme\p{\lm t}}{\sig\p2(\rme\p{\lm t}-1)}\right)\p{\et}\exp\left(-\frac{2\lm}{\sig\p2(\rme\p{\lm t}-1)V_0}\right)&\frac{\Gamma(q+1-\et)}{\Gamma(q+1)}\times\\&\M\left(q+1-\et,q+1,\frac{2\lm}{\sig\p2(\rme\p{\lm t}-1)V_0}\right)\,,
\end{split}
\end{equation}
for $\et\in\Rbb$ such that $q+1-\et>0$. By definition of $q$ and because $\kappa>0$, this condition is in particular satisfied if $\et=2$. Furthermore, there exists an $\ep>0$ such that \eqref{eq:mom.v_t} is true also for $2+\ep$: In $3/2$-model, contrarily to Heston model, $V_t$ does not have finite moments of every order. However, \eqref{eq:mom.v_t} is a useful formula to infer, in function of the parameters $\kappa$ and $\sig\p2$ of the model, up to which number $\et>2$, the random variable $V_t\p \et$ is integrable: If, for example, $\kappa=1$ and $\sig\p2=0.2$, then $\Ebb[V_t\p {\et}]<+\infty$ for $\et<13$.

In \cite[Proposition 3.1]{G16}, conditions on the parameters of the model are given to ensure integrability of $\exp(zX_T)$, which read as $\Rea(z)\in\Ascr_{0,+\infty}$, where the set $\Ascr_{0,+\infty}$ is defined in \cite[Eq.~(3.7)]{G16}.

For $z\in\Cbb$ such that $\exp(zX_T)$ is integrable, in the next step we deduce the explicit expression of the square integrable complex-valued martingale $H(z)_t:=\Ebb[\exp(zX_T)|\Fscr_t]$, $t\geq0$. 

Recalling the relation $\M(x,y,z)=\rme\p z\,\M(y-x,y,-z)$, $z\in\Cbb$, for the confluent hypergeometric function, from  \cite[Eq.~(3.3)]{G16} with $a=0$ and $b=1$,
we obtain
\begin{equation}\label{eq:cond.char.fun}
H(z)_t=\rme\p{zX_t}g(t,V_t,z)\,,
\end{equation}
where the function $g$ is given by
\begin{equation}\label{eq:cond.char.fun.h}
g(t,v,z):=\frac{\Gamma(\bt_z-\al_z)}{\Gamma(\bt_z)}\left(\frac{2\lm}{v\sig\p2(\rme\p{\lm(T-t)}-1)}\right)\p{\al_z}\M\left(\al_z,\bt_z,-\frac{2\lm}{v\sig\p2(\rme\p{\lm(T-t)}-1)}\right)
\end{equation}
and
\begin{equation}\label{eq:def.alz.btz}
\al_z=-\frac12-\frac{\tilde\kappa_z}{\sig\p2}+c_z\,,\quad \bt_z=1+2c_z\,,\quad\tilde\kappa_z=k-z\varrho\sig\,,\quad c_z:=\sqrt{\left(\frac12+\frac{\tilde\kappa_z}{\sig\p2}\right)\p2+\frac{z-z\p2}{\sig\p2}}\,.
\end{equation}
Taking $m_z$ as in \cite[Eq.~(3.5)]{G16} with $a=0$ and $b=1$, then $c_z=\frac{m_z}2$. 
Using the properties of the confluent hypergeometric function $\M(x,y,z)$, we get the derivative with respect to $v$ of $g$ in \eqref{eq:cond.char.fun.h}:
\begin{equation}\label{eq:g.der.v}
\begin{split}
\partial_{v} &g(t,v,z)=\\&\ \ \ \frac{\al_z}{v}\big(\gm(t,v)\big)\p{\al_z}\frac{\Gamma(\bt_z-\al_z)}{\Gamma(\bt_z)}\bigg[\frac{\gm(t,v)}{\bt_z}\M\big(\al_z+1,\bt_z+1,-\gm(t,v)\big)-\M\big(\al_z,\bt_z,-\gm(t,v)\big)\bigg]\,
\end{split}
\end{equation}
where
\begin{equation}\label{eq:def.gm}
\gm(t,v):=\frac{2\lm}{\sig\p2(\rme\p{\lm(T-t)}-1)v}.
\end{equation}

To ensure that the price process $S=\rme\p X$ is a true martingale the so-called \emph{Feller condition}, which reads 
\begin{equation}\label{eq:mar.con.S}
\kappa-\varrho\sig\geq-\frac{\sig\p2}{2},
\end{equation}
is sufficient (see \cite[Eq.~(4)]{D12} or \cite[Remark 3.3]{G16} with the identification of parameter in \eqref{eq:par.gra.rel}). Notice that, if $\varrho\leq0$, then \eqref{eq:mar.con.S} is always satisfied. To get square integrability of $S_t$, $t\in[0,T]$, we require $2\in\Ascr_{0,+\infty}$.

In the following, we consider square integrable contingent claims $\et\p j, j=1,\dotsc,d$ and denote by $H\p j$ the associated square integrable martingales$H\p j_t:=\Ebb[\et\p j|\Fscr_t]$. We assume that the pay-off function $h\p j$ of $\et\p j$ has the representation in \eqref{eq:rep.fourier} with respect to a complex-valued non-atomic finite measure $\zt\p j$ on the strip $\Sscr\p j=\{z\in\Cbb: z=R\p j+\Ima(z)\}$, where $\Ebb[\exp(2R\p jX_T)]<+\infty$. 
In addition, we consider the variance swap $\et\p0=[X,X]_T-k\p{\textnormal{swap}}$. Analyzing the explicit expression of the Laplace transform  of $\et\p0$, which was calculated in \cite[Theorem 3]{CS07} (cf.\ also \cite[Appendix A]{G16}), it follows that it is defined in an open neighbourhood of zero. Therefore $\et\p0$ has finite moments of every order and we can consider the associated martingale $H\p 0_t:=\Ebb[\et\p0|\Fscr_t]$, $t\in[0,T]$, which is, in particular, square integrable.

\paragraph*{Variance-Optimal Semi-Static Hedging. } As for the Heston model we define
\[
\rmd Q_t:=\rmd\aPP{V\p c}{V\p c}_t-\frac{\rmd\aPP{X\p c}{V\p c}_t}{\rmd\aPP{X\p c}{X\p c}_t}\rmd \aPP{X\p c}{V\p c}_t=\left(c_t\p{22}-\frac{c\p{12}_t}{c\p{11}_t}c\p{12}_t\right)\rmd t,
\]
from \eqref{eq:3/2.mod} we see that 
\[
c_t\p{11}=V_t,\quad c_t\p{12}=\rho\sig V_t\p2,\quad c_t\p{22}=\sig\p2V_t\p3,\quad c_t\p{j3}=0,\quad j=1,2,3,
\]
and hence
\begin{equation}\label{eq:def.Q.th.ex}
\rmd Q_t=\sig\p2(1-\rho\p2)V_t\p3\rmd t.
\end{equation}
\begin{proposition}\label{prop:A.3/2}
Let $A=\Ebb[\aPP{L\p0}{L\p0}_T]$, $L\p0$ being the residual in the GKW-decomposition of $H\p0$ with respect to $S$. Then
\[
\begin{split}
A=&\frac{\sig\p2(1-\rho\p2)}{2}V_0\int_0\p T\int_0\p\infty\Big\{\frac{4}{\sig\p2\lm(\rme\p{\lm t}-1)}\exp\Big(\lm\Big(1+\frac{ q}{2}\Big)t-\frac{2\lm}{\sig\p2(\rme\p{\lm t}-1)}\Big(\frac1{V_0}+\frac{\rme\p{\lm t}}{v}\Big)\Big)\Big(\frac{v}{V_0}\Big)\p{1-q/2}\times
\\&
\hspace{4cm} \Big(\big(\rme\p{\lm(T-t)}-1\big)h\p\prime\Big(\frac{\rme\p{\lm(T-t)}-1}{\lm}v\Big)\Big)\p2I_q\Big(\frac{4\lm\rme\p{\frac{\lm}{2}t}}{\sig\p2(\rme\p{\lm t}-1)\sqrt{vV_0}}\Big)\Big\}\rmd v\,\rmd t\,,
\end{split}
\]
where $h$ is given by
\begin{equation}\label{eq:fct.h.swap}
h(y):=\int_0\p y\rme\p{-{2/x\sig\p2}}x\p{{2k/\sig\p2}}\int_x\p\infty \frac{2}{\sig\p2}\rme\p{{2/u\sig\p2}}u\p{-{2k/\sig\p2}-2}\rmd u\,\rmd x\,
\end{equation}
with derivative
\begin{equation}\label{eq:fct.h.swap.der}
h\p\prime(y)=\rme\p{-{2/y\sig\p2}}y\p{{2k/\sig\p2}}\int_y\p\infty \frac{2}{\sig\p2}\rme\p{{2/u\sig\p2}}u\p{-{2k/\sig\p2}-2}\rmd u\,.
\end{equation}
\end{proposition}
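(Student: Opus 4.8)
The plan is to reduce everything, via the Markov structure of the variance swap, to the single quantity $\partial_v G$, where $G$ is the expected future integrated variance; to identify $\partial_v G$ in closed form through the Kolmogorov equation and the ODE satisfied by $h'$; and finally to integrate against the explicit law of $V_t$.

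First I would exploit that, by \eqref{eq:3/2.mod}, the volatility process $V$ is an autonomous time-homogeneous Markov process, so that $\Ebb\big[\int_t^T V_s\,\rmd s\,\big|\,\Fscr_t\big]=G(t,V_t)$ with $G(t,v):=\Ebb\big[\int_t^T V_s\,\rmd s\,\big|\,V_t=v\big]$. Since $[X,X]_t=\int_0^t V_s\,\rmd s$ is $\Fscr_t$-measurable, this yields $H^0_t=\Ebb[\et^0\mid\Fscr_t]=[X,X]_t+G(t,V_t)-k^{\textnormal{swap}}$, i.e. $H^0_t=f^0(t,X_t,V_t,[X,X]_t)$ with $f^0(t,x,v,w)=w+G(t,v)-k^{\textnormal{swap}}$ (of class $C^{1,2}$ once $G$ is known to be smooth). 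Hence $\partial_x f^0=0$, $\partial_w f^0=1$ and $\partial_v f^0=\partial_v G$. Writing $x_2=V$, $x_3=[X,X]$ and using that $[X,X]$ has no continuous martingale part (so $c^{j3}=0$), only the $(2,2)$-term in \eqref{eq:res.pb.exp.abs.con} survives; with $c^{11}=V$, $c^{12}=\rho\sig V^2$, $c^{22}=\sig^2V^3$ and \eqref{eq:def.Q.th.ex} this gives
$$\rmd\aPP{L^0}{L^0}_t=(\partial_v G(t,V_t))^2\,\rmd Q_t=\sig^2(1-\rho^2)\,(\partial_v G(t,V_t))^2\,V_t^3\,\rmd t.$$

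The hard part, and the core of the proof, is to identify $\partial_v G$ with the function $h'$ of \eqref{eq:fct.h.swap.der}. By Feynman--Kac, $G$ solves $\partial_t G+\cL_V G+v=0$ on $[0,T)$ with $G(T,\cdot)=0$, where $\cL_V=v(\lm-\ka v)\partial_v+\tfrac12\sig^2v^3\partial_v^2$ is the generator of $V$. Differentiating in $v$ and setting $g:=\partial_v G$ gives
$$\partial_t g+(\lm-2\ka v)g+v(\lm-\ka v)\partial_v g+\tfrac32\sig^2v^2\partial_v g+\tfrac12\sig^2v^3\partial_v^2 g+1=0,\qquad g(T,\cdot)=0.$$
I claim the solution is $g(t,v)=\tfrac{\rme^{\lm(T-t)}-1}{\lm}\,h'\!\big(\tfrac{\rme^{\lm(T-t)}-1}{\lm}v\big)$; the terminal condition is immediate since $\rme^0-1=0$. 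To verify the equation, note that \eqref{eq:fct.h.swap.der} shows $p:=h'$ solves the linear ODE $\tfrac12\sig^2y^2p'(y)=(1+\ka y)p(y)-1$, and differentiating this once more gives $\tfrac12\sig^2y^3p''(y)=\ka y\,p(y)+(y+\ka y^2-\sig^2y^2)p'(y)$. Passing to the variable $y=\tfrac{\rme^{\lm(T-t)}-1}{\lm}v$ and using these two ODE identities to eliminate $p''$ and the constant term, all $\lm$- and $v$-dependence cancels and the left-hand side collapses identically to $0$. Uniqueness for this degenerate parabolic Cauchy problem on $(0,\infty)$ — which is where the regularity of $G$ (from smoothness of the transition density underlying \eqref{eq:den.fun.V_t}) and a growth control on the candidate must be invoked — then gives $\partial_v G=g$. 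This identification is the main obstacle; once it is in hand the remaining steps are routine.

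Finally I would take expectations. Combining the two displays,
$$A=\Ebb[\aPP{L^0}{L^0}_T]=\sig^2(1-\rho^2)\int_0^T\Ebb\big[(\partial_v G(t,V_t))^2\,V_t^3\big]\,\rmd t,$$
and inserting the explicit density \eqref{eq:den.fun.V_t} of $V_t$ turns each inner expectation into an integral over $v\in(0,\infty)$. Substituting the closed form for $\partial_v G$ (so that $(\partial_v G)^2=\lm^{-2}(\rme^{\lm(T-t)}-1)^2\,h'(\cdot)^2$) and using the elementary identity $V_0\,(v/V_0)^{1-q/2}=v\,(V_0/v)^{q/2}$ to match the power of $v$ from the factor $v^3$ against the density, the constants regroup into the claimed prefactor $\tfrac{\sig^2(1-\rho^2)}{2}V_0\cdot\tfrac{4}{\sig^2\lm(\rme^{\lm t}-1)}$, yielding exactly the stated double integral. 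Fubini/Tonelli is justified because $h'(y)\sim\text{const}/y$ as $y\to\infty$ by \eqref{eq:fct.h.swap.der}, whence $(\partial_v G)^2V_t^3=O(V_t)$ for large $V_t$, and $\Ebb[V_t]<\infty$ by \eqref{eq:mom.v_t} with $\et=1$, so every integral converges.
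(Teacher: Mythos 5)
Your proposal is correct and follows the same skeleton as the paper's proof: write $H^0_t+k^{\textnormal{swap}}=[X,X]_t+G(t,V_t)$, read off $\partial_v f^0$, apply Corollary \ref{cor:poi.br.res.smar.abs.con}(ii) together with \eqref{eq:def.Q.th.ex} to get $\rmd\aPP{L^0}{L^0}_t=\sigma^2(1-\rho^2)(\partial_vG(t,V_t))^2V_t^3\,\rmd t$, and integrate against the density \eqref{eq:den.fun.V_t}; your power-counting $V_0(v/V_0)^{1-q/2}=v(V_0/v)^{q/2}$ and the regrouping of constants are exactly what the paper does. The one place you genuinely diverge is the identification of $\partial_vG$: the paper simply quotes \cite[Theorem 4]{CS07} for the closed form $\Ebb[\int_t^TV_s\,\rmd s\,|\,\Fscr_t]=h\big(\tfrac{\rme^{\lambda(T-t)}-1}{\lambda}V_t\big)$ and then differentiates, whereas you re-derive it by differentiating the Feynman--Kac equation for $G$ and verifying your candidate via the two ODE identities for $p=h'$. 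I checked your algebra: the first-order identity $\tfrac12\sigma^2y^2p'=(1+\kappa y)p-1$ does follow from \eqref{eq:fct.h.swap.der}, its differentiated form is as you state, and the substitution $y=a(t)v$ with $a'=-(\lambda a+1)$ does make the PDE collapse to zero. What remains open in your version is the uniqueness/verification step for the degenerate parabolic problem, which you flag but do not close; this is precisely the content the paper outsources to the citation. A cleaner way to finish your self-contained route is to avoid differentiating the PDE altogether: show directly that $u(t,v):=h(a(t)v)$ satisfies $\partial_tu+\mathcal{L}_Vu+v=0$, $u(T,\cdot)=0$ (this uses only the single second-order ODE for $h$, i.e.\ \cite[Eq.~(81)]{CS07}), apply It\^o's formula to $u(t,V_t)+\int_0^tV_s\,\rmd s$, and use an integrability bound (e.g.\ $h'(y)\sim c/y$, so $u$ has linear growth and $\Ebb[\sup_{t\le T}V_t]<\infty$ type controls, or localization plus uniform integrability) to conclude that this local martingale is a true martingale, whence $u=G$; then no uniqueness theorem for the differentiated equation is needed. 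Your Tonelli justification at the end, via the decay $h'(y)\sim c/y$ and $\Ebb[V_t]<\infty$, is actually slightly more careful than the paper's, which only invokes $A<\infty$ (from square-integrability of $L^0$) and Tonelli; note that the crude bound $\|h'\|_\infty^2\Ebb[V_t^3]$ would not suffice since $\Ebb[V_t^3]$ can be infinite when $\kappa\le\sigma^2/2$, so the decay of $h'$ really is needed as you use it.
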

\begin{proof}
By continuity, the semimartingale $(X,V,[X,X])$ is locally square integrable. To compute $A$ we start from \eqref{not:var.cov}. Because of the Markov property of $V$ with respect to $\Fbb$ and \cite[Theorem 4]{CS07}, we have 
\begin{equation}\label{eq:H0.3/2}
\begin{split}
H\p0_t+k\p{\textnormal{swap}}&=\int_0\p t V_s\rmd s+\Ebb\left[\int_t\p TV_s\rmd s\Big|\Fscr_t\right]
\\&=
\int_0\p t V_s\rmd s+h\Big(\frac{\rme\p{\lm(T-t)}-1}{\lm}V_t\Big).
\end{split}
\end{equation}
The function $h$ in \eqref{eq:fct.h.swap} is twice continuously differentiable and satisfies an ODE of the second order (cf.\ \cite[Eq.~(81)]{CS07}).
Hence we see that $H\p0_t=f\p0(t,V_t,[X,X]_t)$, where
\begin{equation}\label{eq:fun.form.H0.th}
f\p0(t,v,w)=h\Big(\frac{\rme\p{\lm(T-t)}-1}{\lm}v\Big)+w;\quad\partial_{v}f\p0=h\p\prime\Big(\frac{\rme\p{\lm(T-t)}-1}{\lm}v\Big)\frac{\rme\p{\lm(T-t)}-1}{\lm}.
\end{equation}
 By Corollary \ref{cor:poi.br.res.smar.abs.con} (ii), \eqref{eq:res.pb.exp.abs.con} and \eqref{eq:def.Q.th.ex} we obtain
\[
\aPP{L\p0}{L\p0}_T=\sig\p2(1-\rho\p2)\int_0\p T\Big\{h\p\prime\Big(\frac{\rme\p{\lm(T-t)}-1}{\lm}V_t\Big)\frac{\rme\p{\lm(T-t)}-1}{\lm}\Big\}\p2V_t\p3\rmd t.
\]
Therefore,
\[
\begin{split}A&=\Ebb[\aPP{L\p0}{L\p0}_T]\\&=\sig\p2(1-\rho\p2)\int_0\p T\Big(\frac{\rme\p{\lm(T-t)}-1}{\lm}\Big)\p2\Ebb\Big[h\p\prime\Big(\frac{\rme\p{\lm(T-t)}-1}{\lm}V_t\Big)\p2V_t\p3\Big]\rmd t\,.
\end{split}
\]
To complete the proof, it is now sufficient to compute the expectation in the previous formula using the density of $V_t$ and \eqref{eq:fct.h.swap.der}. Notice that, because of Fubini's theorem, the inner expectation is finite for almost all $t\in[0,T]$. The proof of the proposition is now complete.
\end{proof}

\begin{remark}\label{rem:app.main.thm.3/2} We remark that, from \eqref{eq:cond.char.fun} and \eqref{eq:cond.char.fun.h}, the mapping $(t,\om,z)\mapsto H(\om,z)_t$ is $\Pscr\otimes\Bscr(\Sscr\p j)$-measurable, for every $j=1,\ldots,d$, by continuity of $(X,V)$. Therefore, we can apply Theorems~\ref{thm:hed.err} and~\ref{thm:comsec3.sec4} to obtain
\[
\vartheta\p j=\int_{\Sscr\p j}\frac{1}{S_t}\big(zH(z)_t+\sig\rho\partial_v g(t,V_t,z)V_t\big)\zt\p j(\rmd z).
\]
\end{remark}
As a next step, we compute the vector $B$ of the covariation of $\et\p0$ with $\et\p1,\ldots,\et\p d$, using formula \eqref{not:var.cov}. We recall that $\et\p1,\ldots,\et\p d$ are square integrable European options with representation as in \eqref{eq:rep.fourier}, while $H\p j$ denotes the square integrable martingale associated with $\et\p j$, $j=1,\ldots,d$.
\begin{proposition}\label{prop:B.3/2}
Let $B=(B\p1,\ldots,B\p d)\p\top$ be defined as in \eqref{not:var.cov}. If, for $j=1,\ldots,d$, the square integrable European option $\et\p j$ has the representation \eqref{eq:rep.fourier}, then
\[
\begin{split}
B\p j&=\frac{\sig\p2(1-\rho\p2)}{2}V_0\int_0\p T\int_{\Sscr\p j} \int_0\p\infty\Big\{\exp\Big(zX_0+\lm\Big(\frac{q}{2}+1-\frac{z\rho}{\sig}\Big)t-\frac{2\lm}{\sig\p2(\rme\p{\lm t}-1)}\Big(\frac1{V_0}+\frac{\rme\p{\lm t}}{v}\Big)\Big)\times
 \\&\hspace{4cm}\times \frac{4}{\sig\p2(\rme\p{\lm t}-1)}h\p\prime\Big(\frac{\rme\p{\lm(T-t)}-1}{\lm}v\Big)\big(\rme\p{\lm(T-t)}-1\big)\partial_{v}g(t,v,z)\times \\&\hspace{5cm}\times I_{\frac2{\sig\p2}\sqrt{B_z}}\Big(\frac{4\lm\rme\p{\frac{\lm}{2}t}}{\sig\p2(\rme\p{\lm t}-1)\sqrt{vV_0}}\Big)\Big(\frac{v}{V_0}\Big)\p{z\rho/\sig+1-q/2}
 \Big\}\rmd v\,\zt\p j(\rmd z)\rmd t,
\end{split}
\]
where
\begin{equation}\label{eq:def.Bz}
B_z:=\left(\kappa+\frac{\sig\p2}{2}\right)\p2+2\sig\p2\left[z\left(\frac{\rho}{\sig}\left(\kappa+\frac{\sig\p2}{2}\right)-\frac12\right)+\frac{z\p2}{2}(1-\rho\p2)\right]\,.
\end{equation}

\end{proposition}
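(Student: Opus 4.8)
The strategy mirrors the Heston computation of Proposition~\ref{prop:comp.B}, the only genuinely new ingredient being the evaluation of a mixed moment in the non-affine setting. First I would record, as in Remark~\ref{rem:app.main.thm.3/2}, that the decomposition $L\p j=\int_{\Sscr\p j}L(z)\zt\p j(\rmd z)$ holds by Theorems~\ref{thm:hed.err} and~\ref{thm:comsec3.sec4}, so that $B\p j=\Ebb[\aPP{L\p0}{L\p j}_T]$ can be assembled from the fibrewise covariations $\aPP{L\p0}{L(z)}$. Using Corollary~\ref{cor:poi.br.res.smar.abs.con}(ii) together with \eqref{eq:def.Q.th.ex}, the expression $\partial_v f\p0$ in \eqref{eq:fun.form.H0.th} and $\partial_v f=\rme\p{zx}\partial_v g$ read off from \eqref{eq:cond.char.fun}--\eqref{eq:g.der.v}, and the fact that $c\p{j3}_t=0$, I would obtain
\[
\rmd\aPP{L\p0}{L(z)}_t=\sig\p2(1-\rho\p2)\,\tfrac{\rme\p{\lm(T-t)}-1}{\lm}\,h\p\prime\!\Big(\tfrac{\rme\p{\lm(T-t)}-1}{\lm}V_t\Big)\,\rme\p{zX_t}\partial_v g(t,V_t,z)\,V_t\p3\,\rmd t=:K(z)_t\,\rmd t.
\]
Since $(X,V)$ is continuous and every factor is an explicit function of $(t,X_t,V_t)$, the process $K(z)$ is jointly progressively measurable, so Proposition~\ref{prop:poi.br.com}(iii) and Fubini yield $B\p j=\int_0\p T\int_{\Sscr\p j}\Ebb[K(z)_t]\,\zt\p j(\rmd z)\,\rmd t$.

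The heart of the proof is then the evaluation of $\Ebb[K(z)_t]$, which reduces to a mixed moment of the form $\Ebb[\rme\p{zX_t}\Phi(V_t)]$ with $\Phi$ an explicit function of $V_t$. In the affine Heston case this was supplied by the closed formula of Proposition~\ref{prop:mix.mom}; here $(X,V)$ is not affine, so I would instead exploit the conditional Gaussian structure of $X_t$. Writing $W\p1=\rho W\p2+\sqrt{1-\rho\p2}\,W\p\perp$ and conditioning on the volatility path, $X_t$ is Gaussian given the history of $V$, and from the 3/2-dynamics \eqref{eq:3/2.mod} an application of It\^o's formula to $\log V$ expresses the stochastic integral $\int_0\p t\sqrt{V_s}\,\rmd W\p2_s$ through $\log(V_t/V_0)$ and the occupation integral $\int_0\p t V_s\,\rmd s$. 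Carrying out the Gaussian expectation therefore collapses the whole object to a weighted expectation
\[
\Ebb[\rme\p{zX_t}\Phi(V_t)]=\rme\p{zX_0}\,V_0\p{-z\rho/\sig}\,\rme\p{-z\rho\lm t/\sig}\,\Ebb\!\Big[V_t\p{z\rho/\sig}\Phi(V_t)\,\exp\big(\mu_z\textstyle\int_0\p t V_s\,\rmd s\big)\Big],
\]
with $\mu_z=-\tfrac z2+\tfrac{z\rho(2\kappa+\sig\p2)}{2\sig}+\tfrac{z\p2}{2}(1-\rho\p2)$; one checks directly that $(\kappa+\tfrac{\sig\p2}{2})\p2+2\sig\p2\mu_z=B_z$, with $B_z$ as in \eqref{eq:def.Bz}.

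It remains to evaluate the exponentially weighted expectation of a function of $V_t$. For the 3/2-process (equivalently the reciprocal CIR process $R=1/V$) the effect of the weight $\exp(\mu_z\int_0\p t V_s\,\rmd s)$ is precisely to shift the index of the modified Bessel density \eqref{eq:den.fun.V_t} from $q$ to $\tfrac2{\sig\p2}\sqrt{B_z}$, the unweighted case $\mu_z=0$ recovering $q$ since $\tfrac2{\sig\p2}\sqrt{(\kappa+\sig\p2/2)\p2}=q$. Inserting this weighted density, together with the prefactors $\rme\p{zX_0}V_0\p{-z\rho/\sig}\rme\p{-z\rho\lm t/\sig}$ and the power $V_t\p{z\rho/\sig}$, and then substituting $\Phi(v)=\partial_v g(t,v,z)\,h\p\prime(\tfrac{\rme\p{\lm(T-t)}-1}{\lm}v)\,v\p3$ with the constants coming from $K(z)$, produces exactly the stated integrand over $v\in(0,\infty)$. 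The main obstacle is this last analytic step: establishing the index-shift formula for the weighted density in the non-affine 3/2-model, for which one must either invoke the joint transform of $(V_t,\int_0\p t V_s\,\rmd s)$ from \cite{G16,JCY09} or rederive it via the Bessel representation of the reciprocal-CIR process. Integrability of the weight, and hence finiteness of $B\p j$, is guaranteed by the standing assumption that $\rme\p{2R\p jX_T}$ is integrable and that $\et\p0$ possesses the required moments.
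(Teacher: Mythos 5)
Your proposal is correct and follows essentially the same route as the paper: the same reduction of $B\p j$ to $\int_0\p T\int_{\Sscr\p j}\Ebb[K(z)_t]\,\zt\p j(\rmd z)\,\rmd t$ via Corollary \ref{cor:poi.br.res.smar.abs.con}, Theorems \ref{thm:hed.err} and \ref{thm:comsec3.sec4} and Proposition \ref{prop:poi.br.com}, followed by conditioning on the volatility to evaluate the mixed moment. The only difference is that the paper simply cites \cite[Proposition 4.1]{G16} for the closed form \eqref{eq:con.cha.X.v} of $\Ebb[\rme\p{zX_t}|V_t]$ (whose $I_q$ denominator cancels against the density \eqref{eq:den.fun.V_t}), whereas you sketch its derivation via the Gaussian conditioning and the Bessel index shift --- your $\mu_z$ and the identity $(\kappa+\sig\p2/2)\p2+2\sig\p2\mu_z=B_z$ check out, and the index-shift step you flag as the remaining obstacle is precisely the content of the cited result.
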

\begin{proof} 
The proof is completely analogous to the one of Proposition \ref{prop:comp.B}, at least up to the computation of the expectation of $\aPP{L\p0}{L\p j}_T$.
The components of the vector $B=(B\p1,\ldots,B\p d)\p\top$ are $B\p j=\Ebb[\aPP{L\p 0}{L\p j}_T]$, $j=1,\ldots,d$, where $L\p j$ is the residual in the GKW decomposition of $H\p j$ with respect to $S$, $j=1,\ldots,d$. We start computing $\aPP{L\p0}{L(z)}$, where $L\p0$ and $L(z)$ are the residuals in the GKW-decomposition of $H\p0$ and $H(z)$ (see \eqref{eq:gkw dec.H.Hz}) with respect to $S$, respectively. From \eqref{eq:fun.form.H0.th}, \eqref{eq:cond.char.fun}, \eqref{eq:cond.char.fun.h} and \eqref{eq:def.Q.th.ex}, because of \eqref{eq:res.pb.exp.abs.con}, we deduce:
\[
\rmd\aPP{L\p 0}{L(z)}_t=\sig\p2(1-\rho\p2)h\p\prime\Big(\frac{\rme\p{\lm(T-t)}-1}{\lm}V_t\Big)\frac{\rme\p{\lm(T-t)}-1}{\lm}\partial_vg(t,V_t,z)V_t \p3\rme\p{zX_t}\rmd t=:K(z)_t\rmd t.
\]
Clearly, $(t,\om,z)\mapsto K(\om,z)_t$ is a jointly predictable mapping. Furthermore, from Theorem \ref{thm:comsec3.sec4} we can apply Theorem \ref{thm:hed.err} to get
\[
L\p j=\int_{\Sscr\p j}L(z)\zt(\rmd x),\quad j=1,\ldots,d.
\]
Because of $\sup_{z\in\Sscr\p j}\Ebb[|L(z)_t|\p2]<+\infty$, we can apply Proposition \ref{prop:poi.br.com} (iii) and deduce
\[
\begin{split}
\aPP{L\p0}{L\p j}_t&=\int_{\Sscr\p j}\aPP{L\p 0}{L(z)}_t\zt\p j(\rmd z)
\\&=\sig\p2(1-\rho\p2)\int_0\p t\int_{\Sscr\p j}h\p\prime\Big(\frac{\rme\p{\lm(T-s)}-1}{\lm}V_s\Big)\frac{\rme\p{\lm(T-s)}-1}{\lm}\rme\p{zX_s}\partial_vg(s,V_s,z)V_s\p3\zt\p j(\rmd z)\rmd s.
\end{split}
\]
Therefore, Proposition \ref{prop:poi.br.com} (ii) and Fubini's theorem yield
\begin{equation}\label{eq:Bj.3/2.mod}
B\p j=\sig\p2(1-\rho\p2)\int_0\p T\int_{\Sscr\p j}\Ebb\Big[h\p\prime\Big(\frac{\rme\p{\lm(T-s)}-1}{\lm}V_s\Big)\frac{\rme\p{\lm(T-s)}-1}{\lm}\partial_vg(s,V_s,z)V_s\p3\rme\p{zX_s}\Big]\zt\p j(\rmd z)\rmd s,
\end{equation}
for $j=1,\ldots,d$. We now compute the inner expectation in \eqref{eq:Bj.3/2.mod}. We have
\begin{equation}\label{eq:exp.first}
\begin{split}
\Ebb\Big[h\p\prime\Big(\frac{\rme\p{\lm(T-t)}-1}{\lm}V_t\Big)&\frac{(\rme\p{\lm(T-t)}-1)}{\lm}V_t\p3\partial_{v}g(t,V_t,z)\rme\p{zX_t}\Big]
\\&=
\Ebb\Big[h\p\prime\Big(\frac{\rme\p{\lm(T-t)}-1}{\lm}V_t\Big)\frac{(\rme\p{\lm(T-t)}-1)}{\lm}V_t\p3\partial_{v}g(t,V_t,z)\Ebb\big[\rme\p{zX_t}|V_t\big]\Big]\,.
\end{split}
\end{equation}
To compute the conditional expectation $\Ebb[\rme\p{zX_t}|V_t]$ we apply the results of \cite{G16}. If $(R_t)_{t\in[0,T]}$ denotes the volatility process of \cite{G16} (cf.\ \cite[Eq.~(2.2)]{G16}), we have $R_t=V_t\p{-1}$. Because $V_t$ does not vanish nor explode, the $\sig$-algebras generated by $V_t$ and $R_t$ coincide. Therefore, from \cite[Proposition 4.1]{G16}, with $a=0$ and $b=1$, recalling \eqref{eq:par.gra.rel}, we get
\begin{equation}\label{eq:con.cha.X.v}
\begin{split}
\Ebb\big[\rme\p{zX_t}|V_t\big]=\exp\left(zX_0-z\frac{\lm\rho}{\sig}\,t\right)\left(\frac{V_t}{V_0}\right)\p{\frac{z\rho}{\sig}}\frac{I_{\frac{2}{\sig\p2}\,\sqrt{B_z}}\left(\frac{4\lm\rme\p{\frac{\lm}{2}t}}{\sig\p2(\rme\p{\lm t}-1)\sqrt{V_0V_t}}\right)}{I_q\left(\frac{4\lm\rme\p{\frac{\lm}{2}t}}{\sig\p2(\rme\p{\lm t}-1)\sqrt{V_0V_t}}\right)},
\end{split}
\end{equation}
where $I_p(x)$ denotes the modified Bessel function of the first kind and $B_z$ is as in \eqref{eq:def.Bz}. Hence, inserting \eqref{eq:con.cha.X.v} in \eqref{eq:exp.first} and using the expression of the density of $V_t$ (cf.\ \eqref{eq:den.fun.V_t}), the statement follows from \eqref{eq:Bj.3/2.mod}.
\end{proof}
We now compute the covariance matrix $C$.
\begin{proposition}\label{prop:com.C.3/2.mod}
Let $C=(C\p{ij})_{i,j=1,\ldots,d}$ be defined as in \eqref{not:var.cov}. Then, for $i,j=1,\ldots,d$,
\[
\begin{split}
C\p{ij}&=	
\frac{\sig\p2(1-\rho\p2)}{2}V_0\int_0\p T\int_{\Sscr\p i}\int_{\Sscr\p j}\int_0\p\infty\Big\{\exp
\Big((z_1+z_2)X_0+\lm\Big(\frac{q}{2}+1-\frac{z_1+z_2}{\sig}\rho\Big)t\Big)\times
\\&
\hspace{2cm}\frac{4\lm}{\sig\p2(\rme\p{\lm t}-1)}\exp\Big(-\frac{2\lm}{\sig\p2(\rme\p{\lm t}-1)}\Big(\frac1{V_0}+\frac{\rme\p{\lm t}}{v}\Big)\Big)\partial_{v}g(t,v,z_1)\partial_{v}g(t,v,z_2)\times\\&\hspace{2.5cm}\times\Big(\frac{v}{V_0}\Big)\p{(z_1+z_2)\rho/\sig+1-q/2} I_{\frac{2}{\sig\p2}\sqrt{B_{z_1+z_2}}}\Big(\frac{4\lm\rme\p{\frac{\lm}{2}t}}{\sig\p2(\rme\p{\lm t}-1)\sqrt{vV_0}}\Big)\Big\}\rmd v\,\zt\p j(\rmd z_2)\,\zt\p i(\rmd z_1)\,\rmd t,
\end{split}
\]
where $g$ is defined in \eqref{eq:cond.char.fun.h} and $B_z$ in \eqref{eq:def.Bz}. The expression of the partial derivative $\partial_v g$ is given in \eqref{eq:g.der.v}.
\end{proposition}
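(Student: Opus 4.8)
The plan is to follow verbatim the strategy used for the Heston model in the proof of Proposition~\ref{prop:cov.mat.C}, replacing the affine mixed-moment formula by an explicit computation based on conditioning on $V_t$. First I would start from the definition $C\p{ij}=\Ebb[\aPP{L\p i}{L\p j}_T]$ and compute the differential of the predictable covariation of the complex-valued residuals $L(z_1),L(z_2)$. Since the factor process is the continuous semimartingale $(X,V,[X,X])$ and $H(z)_t=\rme\p{zX_t}g(t,V_t,z)$ depends only on $(X,V)$, the relevant partial derivative is $\partial_v f(t,x,v,z)=\rme\p{zx}\partial_v g(t,v,z)$ with $\partial_v g$ given by \eqref{eq:g.der.v}. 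Applying Corollary~\ref{cor:poi.br.res.smar.abs.con}(ii), equation~\eqref{eq:res.pb.exp.abs.con} --- whose double sum runs only over the $V$-component, because $H(z)$ does not depend on $[X,X]$ --- together with \eqref{eq:def.Q.th.ex}, I obtain
\[
\rmd\aPP{L(z_1)}{L(z_2)}_t=\sig\p2(1-\rho\p2)\rme\p{(z_1+z_2)X_t}\partial_v g(t,V_t,z_1)\partial_v g(t,V_t,z_2)V_t\p3\,\rmd t=:K(z_1,z_2)_t\,\rmd t.
\]

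Next I would invoke the measurability machinery of Section~\ref{sec:GKW.eu}. By \eqref{eq:cond.char.fun} and \eqref{eq:cond.char.fun.h} the map $(t,\om,z)\mapsto H(\om,z)_t$ is $\Pscr\otimes\Bscr(\Sscr\p j)$-measurable (by continuity of $(X,V)$), so $K(z_1,z_2)$ is jointly predictable; Theorem~\ref{thm:comsec3.sec4} then allows me to apply Theorem~\ref{thm:hed.err} to write $L\p i=\int_{\Sscr\p i}L(z)\zt\p i(\rmd z)$ and $L\p j=\int_{\Sscr\p j}L(z)\zt\p j(\rmd z)$. With $K$ jointly progressively measurable, Theorem~\ref{thm:pred.cov.mixt}(ii) produces the iterated-integral representation of $\aPP{L\p i}{L\p j}_T$, and Proposition~\ref{prop:poi.br.com}(ii) together with Fubini's theorem interchanges the expectation with the $\zt$-integrals, reducing the problem to computing the inner expectation $\Ebb[K(z_1,z_2)_t]$.

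The substantive step is evaluating $\Ebb[\rme\p{(z_1+z_2)X_t}\partial_v g(t,V_t,z_1)\partial_v g(t,V_t,z_2)V_t\p3]$. Here the affine shortcut of the Heston case is unavailable, so I would condition on $V_t$: since $\partial_v g(t,V_t,z)$ and $V_t\p3$ are $\sig(V_t)$-measurable, the tower property gives $\Ebb[\,\cdot\,]=\Ebb[\partial_v g(t,V_t,z_1)\partial_v g(t,V_t,z_2)V_t\p3\,\Ebb[\rme\p{(z_1+z_2)X_t}\mid V_t]]$. Inserting the $3/2$-model conditional moment formula \eqref{eq:con.cha.X.v} with $z=z_1+z_2$ and then integrating against the explicit density \eqref{eq:den.fun.V_t} of $V_t$ yields the stated expression; crucially, the factor $I_q$ in the denominator of \eqref{eq:con.cha.X.v} cancels the $I_q$ appearing in the density, leaving the single Bessel factor $I_{\frac{2}{\sig\p2}\sqrt{B_{z_1+z_2}}}$ of the final formula.

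The main obstacle is \emph{integrability}. Unlike the Heston model, $V_t$ in the $3/2$-model does not possess finite moments of all orders, so before conditioning and applying Fubini one must verify that the integrand in the $v$-integral genuinely lies in $L\p1$. This is controlled by the square integrability of $S$ and of each $H(z)$ on its strip, combined with the asymptotics of $\M$ and $I_p$ near $0$ and $\infty$, which guarantee that the products of $\partial_v g$, the exponential weight and $V_t\p3$ remain integrable against the Bessel density for $z_j\in\Sscr\p j$. Once this integrability is secured, all the interchanges above are justified and the remaining computation is routine.
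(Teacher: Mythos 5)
Your proposal follows essentially the same route as the paper's own proof: the same application of Corollary~\ref{cor:poi.br.res.smar.abs.con}(ii) with \eqref{eq:def.Q.th.ex} to identify $K(z_1,z_2)$, the same chain Theorem~\ref{thm:comsec3.sec4} $\to$ Theorem~\ref{thm:pred.cov.mixt}(ii) $\to$ Proposition~\ref{prop:poi.br.com}(ii) with Fubini, and the same conditioning on $V_t$ via \eqref{eq:con.cha.X.v} followed by integration against the density \eqref{eq:den.fun.V_t} (including the cancellation of the $I_q$ factors). Your added attention to the integrability of the inner expectation is a sensible refinement, but in the paper it is already absorbed into the general estimates behind Proposition~\ref{prop:poi.br.com} and the square integrability of the $H(z)$ on their strips, so the argument is correct and matches the paper's.
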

\begin{proof}
We proceed as in Proposition \ref{prop:cov.mat.C}: By definition, we have $C\p {ij}=\Ebb\big[\aPP{L\p i}{L\p j}_T]$, $j=1,\ldots, d$. Furthermore, from \eqref{eq:res.pb.exp.abs.con} and \eqref{eq:def.Q.th.ex}, we get
\[
\begin{split}
\rmd\aPP{L(z_1)}{L(z_2)}_t&=\partial_{v}f(t,X_t,V_t,z_1)\partial_{v}f(t,X_t,V_t,z_2)\rmd Q_t
\\&=\sig\p2(1-\varrho\p2)\partial_vg(t,V_t,z_1)\partial_vg(t,V_t,z_2)V_s\p3\rme\p{(z_1+z_2)X_t}\rmd t
\\&=:K(z_1,z_2)_t\rmd t,
\end{split}
\]
and $(t,\om,z_1,z_2)\mapsto K(\om,z_1,z_2)_t$ is a jointly progressively measurable process. 
As in Proposition \ref{prop:cov.mat.C}, we now deduce
\begin{equation}\label{eq:Cij.3/2.mod}
\begin{split}
C\p{ij}&=\sig\p2(1-\rho\p2)\int_0\p T\int_{\Sscr\p i}\int_{\Sscr\p j}\Ebb\Big[V_s\p3\partial_{v}g(s,V_s,z_1)\partial_{v}g(s,V_s,z_2)\rme\p{(z_1+z_2)X_s}\Big]\zt\p j(\rmd z_2)\zt\p i(\rmd z_1) \rmd s
\\&=
\sig\p2(1-\rho\p2)\int_0\p T\int_{\Sscr\p i}\int_{\Sscr\p j}\Ebb\Big[V_s\p3\partial_{v}g(s,V_s,z_1)\partial_{v}g(s,V_s,z_2)\Ebb\big[\rme\p{(z_1+z_2)X_s}\big|V_s\big]\Big]\zt\p j(\rmd z_2)\zt\p i(\rmd z_1) \rmd s,
\end{split}
\end{equation}
for every $i,j=1,\ldots,d$. From \eqref{eq:con.cha.X.v} with $z=z_1+z_2$ we can compute the conditional expectation on the right-hand side of \eqref{eq:Cij.3/2.mod}. The statement of the proposition follows computing the outer expectation on the right-hand side of \eqref{eq:Cij.3/2.mod} with the help of the density function of $V_t$ (cf.\ \eqref{eq:den.fun.V_t}) and the proof is complete.
\end{proof}

\begin{appendices}
\section{Moments in Heston Model}\label{subsec:mom.H}
This appendix is devoted to the proof Proposition \ref{prop:mix.mom}. We start with a preliminary lemma. The notation was introduced in \eqref{eq:xi0} and \eqref{eq:xi1}.
\begin{lemma}\label{lem:int.mix.mom}
Let $R\p j\in\Rbb$ be such that $\Ebb[\exp(2R\p jX_T)]<+\infty$, $j=1,2$. Then there exists $\ep=\ep(t)>0$ such that $\Ebb[\exp((R\p1+R\p2)X_t+(r\p1_{R\p1,R\p2}+\ep)V_t)]<+\infty$ for every $t\in[0,T]$.  In particular, 
the solution of the Riccati equation  \eqref{eq:psi.hest} starting at $(R\p1+R\p2,u)$ exists up to time $T$, for each $u\in\Rbb$ in the interval $(r\p1_{R\p1,R\p2}-\ep,r\p1_{R\p1,R\p2}+\ep)$ and $V_tH(R\p1)_tH(R\p2)_t$ is integrable, for every fixed $t\in[0,T]$.
\end{lemma}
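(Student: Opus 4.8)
The plan is to reduce the whole statement to the finiteness of the joint moment generating function of $(X_t,V_t)$ at the point $(R\p1+R\p2,\,r\p1_{R\p1,R\p2})$ and then to exploit the openness of its effective domain. First I would use the affine formula \eqref{eq:joi.char} to write, for the \emph{real} arguments $R\p1,R\p2$,
\[
H(R\p1)_tH(R\p2)_t=\exp\!\big((R\p1+R\p2)X_t+r\p0_{R\p1,R\p2}+r\p1_{R\p1,R\p2}V_t\big),
\]
where $r\p0_{R\p1,R\p2}$ and $r\p1_{R\p1,R\p2}$ are real deterministic constants, being sums of values of $\phi$ and $\psi$ at real arguments. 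Since each $H(R\p j)$ is a strictly positive square integrable martingale with $H(R\p j)_T=\rme\p{R\p jX_T}$, we have $\Ebb[H(R\p j)_t\p2]\le\Ebb[H(R\p j)_T\p2]=\Ebb[\rme\p{2R\p jX_T}]<+\infty$; hence, using $ab\le\frac12(a\p2+b\p2)$,
\[
\Ebb\big[\exp\!\big((R\p1+R\p2)X_t+r\p1_{R\p1,R\p2}V_t\big)\big]=\rme\p{-r\p0_{R\p1,R\p2}}\,\Ebb\big[H(R\p1)_tH(R\p2)_t\big]<+\infty .
\]
This shows that $(R\p1+R\p2,\,r\p1_{R\p1,R\p2})$ already lies in the effective domain of the $t$-moment generating function of $(X_t,V_t)$.

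Next I would translate this into the Riccati picture. By the characterisation of exponential moments of affine processes already used in this section (cf.\ \cite{KM15}), finiteness of $\Ebb[\exp(u_1X_t+u_2V_t)]$ is equivalent to non-explosion of the solution $\psi_s(u_1,u_2)$ of \eqref{eq:psi.hest} on the closed interval $[0,t]$, i.e.\ to the strict inequality $t<t_+(u_1,u_2)$. The previous step therefore yields $t_+(R\p1+R\p2,\,r\p1_{R\p1,R\p2})>t$. For the fixed first argument $u_1=R\p1+R\p2$, equation \eqref{eq:psi.hest} is a scalar Riccati ODE with positive leading coefficient $\frac12\sig\p2$, so by the comparison principle $u_2\mapsto t_+(u_1,u_2)$ is monotone and, by continuous dependence on the initial datum (which can also be read off the closed form \eqref{eq:sol.psi}), continuous. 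Consequently there is $\ep=\ep(t)>0$ with $t_+(R\p1+R\p2,u)>t$, equivalently $\Ebb[\exp((R\p1+R\p2)X_t+uV_t)]<+\infty$, for every $u\in(r\p1_{R\p1,R\p2}-\ep,\,r\p1_{R\p1,R\p2}+\ep)$; this is the asserted Riccati statement, and in particular it holds at $u=r\p1_{R\p1,R\p2}+\ep$.

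Finally I would absorb the linear prefactor into the exponential. Since $V_t\ge0$, the elementary inequality $v\,\rme\p{r\p1_{R\p1,R\p2}v}\le(\rme\,\ep)\p{-1}\rme\p{(r\p1_{R\p1,R\p2}+\ep)v}$, valid for all $v\ge0$, gives
\[
V_tH(R\p1)_tH(R\p2)_t\le\frac{\rme\p{\,r\p0_{R\p1,R\p2}}}{\rme\,\ep}\,\exp\!\big((R\p1+R\p2)X_t+(r\p1_{R\p1,R\p2}+\ep)V_t\big),
\]
whose expectation is finite by the second paragraph; this proves integrability of $V_tH(R\p1)_tH(R\p2)_t$ for each fixed $t\in[0,T]$. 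The step I expect to be the main obstacle is the openness/strictness argument: one must be certain that $(R\p1+R\p2,\,r\p1_{R\p1,R\p2})$ lies in the domain \emph{strictly} (equivalently $t_+>t$ and not merely $t_+\ge t$), so that a genuine two-sided neighbourhood of initial data survives. This is exactly where the equivalence between finiteness of the moment generating function and non-explosion of the Riccati solution on the \emph{closed} interval $[0,t]$, together with the monotonicity and continuity of the explosion time, does the decisive work.
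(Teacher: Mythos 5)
Your proof is correct and its skeleton coincides with the paper's: (a) the affine product formula together with the square-integrability of $H(R^j)_T=\rme^{R^jX_T}$ places $(R^1+R^2,\,r^1_{R^1,R^2})$ in the effective domain of the moment generating function of $(X_t,V_t)$; (b) this is enlarged to a two-sided neighbourhood in the $u_2$-variable; (c) the prefactor $V_t$ is absorbed via an inequality of the form $v\le C_\ep\,\rme^{\ep v}$ for $v\ge0$. The only genuine divergence is in step (b): the paper simply invokes the openness of the set $M(t)=\{(u_1,u_2):\Ebb[\rme^{u_1X_t+u_2V_t}]<+\infty\}$ for affine processes, citing \cite{F09}, whereas you derive the neighbourhood from the equivalence in \cite{KM15} between finiteness of the exponential moment and non-explosion of the Riccati flow on the closed interval $[0,t]$, combined with monotonicity and continuity of the explosion time $u_2\mapsto t_+(R^1+R^2,u_2)$. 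Your route amounts to proving the openness that the paper cites, which is a legitimate and more self-contained alternative; note that you only need \emph{lower semicontinuity} of $t_+$ in the initial datum (so that $t_+>t$ survives a small perturbation of $u_2$), and this follows from continuous dependence of solutions on compact time intervals, sparing you from checking genuine continuity or the direction of the comparison-principle monotonicity. You also correctly isolate the one delicate point, namely that the equivalence of \cite{KM15} yields the strict inequality $t_+>t$ rather than $t_+\ge t$; this is precisely what makes the perturbation argument work, in your version and in the paper's alike.
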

\begin{proof}
Because of the affine structure of the Heston model, we have
\begin{equation}\label{eq:int.prod}
H(R\p1)_tH(R\p2)_t=\exp\big(r\p0_{R\p1,R\p2}\big)\exp\big((R\p1+R\p2)X_t+r\p1_{R\p1,R\p2}V_t\big).
\end{equation}
The left-hand side of the previous identity is integrable by the assumptions on $R\p1$ and $R\p2$.  Hence also the right-hand side of \eqref{eq:int.prod} is integrable. Therefore, $(R\p1+R\p2,r\p1_{R\p1,R\p2})$ belongs to the open set (see\ \cite[Theorem 10.3(b), Lemma 10.1(c)]{F09})  $M(t):=\{(u_1,u_2)\in\Cbb\p2:\ \Ebb[\rme\p{u_1X_t+u_2V_t}]<+\infty\}$, for every $t\in[0,T]$. So, for every $t\in[0,T]$, there exists $\ep=\ep(t)>0$ such that $(R\p1+R\p2,r\p1_{R\p1,R\p2}\pm\ep)\in M(t)$. Because $V_t\geq0$, using Taylor expansion of $\rme\p{\ep V_t}$, we deduce
\[\begin{split}
V_tH(R\p1)_tH(R\p2)_t&=V_t\exp\big(r\p0_{R\p1,R\p2}\big)\exp\big((R\p1+R\p2)X_t+r\p1_{R\p1,R\p2}V_t\big)\\&\leq\frac{1}{\ep}\exp\big(r\p0_{R\p1,R\p2}\big)\exp\big((R\p1+R\p2)X_t+(r\p1_{R\p1,R\p2}+\ep)V_t\big)
\end{split}
\]
and the right-hand side is integrable. The existence of the solution of the Riccati equation starting at $(R\p1+R\p2,r\p1_{R\p1,R\p2})$ follows from \cite[Theorem 2.14, Example 2.19]{KM15}. The proof is now complete.
\end{proof}
We are now ready to give a proof of Proposition \ref{prop:mix.mom}.
\begin{proof}[Proof of Proposition \ref{prop:mix.mom}]
Notice that $V_tH(z_1)_tH(z_2)_t$ is integrable. This follows from Lemma \ref{lem:int.mix.mom} and the estimate
\[
|V_tH(z_1)_tH(z_2)_t|\leq V_tH(R\p1)_tH(R\p2)_t\,,\qquad t\in[0,T].
\]
Because $H(z_j)_t=\exp\big(\phi_{T-t}(z_j,0))+\psi_{T-t}(z_j,0))V_t+z_jX_t\big)$, $j=1,2$, the Heston model being affine, we also have
\[
H(z_1)_tH(z_2)_t=\exp\big(r\p0_{z_1,z_2}\big)\exp\big((z_1+z_2)X_t+r\p1_{z_1,z_2}V_t\big),
\]
which implies the integrability of the right-hand side. From Lemma \ref{lem:int.mix.mom} this holds also for each $u\in\Cbb$ such that $u\in B_{\tilde\ep}(r\p1_{z_1,z_2})$, $\tilde\ep=\ep/2$, where $\ep=\ep(t)>0$ is as in Lemma \ref{lem:int.mix.mom} and $B_\delta(z)$ denotes the open ball in the complex plane centered in $z\in\Cbb$ and of radius $\delta>0$. From the affine formula, for $u\in B_{\tilde\ep}(r\p1_{z_1,z_2})$, we deduce
\begin{equation}\label{eq:aff.for}
\begin{split}
\Ebb\big[\exp\big(&(z_1+z_2)X_t+uV_t\big)\big]\\&=\exp\big(\phi_t((z_1+z_2),u))+(z_1+z_2)X_0+\psi_t((z_1+z_2),u)V_0\big)\,.\end{split}
\end{equation}
Now, setting $g(t,u):=\exp((z_1+z_2)X_t+uV_t)$, we have
\[
V_tH(z_1)_tH(z_2)_t=\exp\big(r\p0_{z_1,z_2}\big)\partial_{u}g(t,u)\big|_{u=r\p1_{z_1,z_2}}\,,
\]
so 
\[
\Ebb\big[V_tH(z_1)_tH(z_2)_t\big]=\exp\big(r\p0_{z_1,z_2}\big)\Ebb\Big[\partial_{u}g(t,u)\big|_{u=r\p1_{z_1,z_2}}\Big]\,.
\]
Our aim is now to exchange expectation and derivative in the previous formula. Taking the supremum over $u\in B_{\tilde\ep}\big(r\p1_{z_1,z_2}\big)$ we get
\[
\begin{split}
\textstyle\sup_{u}\displaystyle\big|\partial_{u}g(t,u)\big|&\\&=\textstyle\sup_{u}\displaystyle\big|V_t\exp\big((z_1+z_2)X_t+uV_t\big)\big|\\&=V_t\textstyle\sup_{u}\displaystyle\big(\exp\big((R\p1+R\p2)X_t+\Rea(u)V_t\big)\big)
\\&\leq V_t\textstyle\sup_{u}\displaystyle\big(\exp\big((R\p1+R\p2)X_t+\big(\Rea\big(r\p1_{z_1,z_2}\big)+\tilde\ep\big) V_t\big)\\&\leq V_t\exp\big((R\p1+R\p2)X_t+\big(r\p1_{R\p1,R\p2}+\tilde\ep\big) V_t\big)
\\&\leq \frac{2}{\ep}\exp\big((R\p1+R\p2)X_t+\big(r\p1_{R\p1,R\p2}+\ep\big) V_t\big),
\end{split}
\]
where, in the second estimation we used $\Rea(\psi_t(z))\leq\psi_t(\Rea (z))$, for every $z\in\Cbb\p2$. The last term in the previous estimation is integrable because of Lemma \ref{lem:int.mix.mom} and therefore we can exchange derivative and expectation in the following computation:
\[
\begin{split}
\Ebb\big[V_tH(z_1)_tH(z_2)_t\big]&=\exp\big(r\p0_{z_1,z_2}\big)\Ebb\Big[\partial_{u}g(t,u)\big|_{u=r\p1_{z_1,z_2}}\Big]\\&=\exp\big(r\p0_{z_1,z_2}\big)\partial_{u}\Ebb\big[g(t,u)\big]\Big|_{u=r\p1_{z_1,z_2}}.
\end{split}
\]
Using now \eqref{eq:aff.for} and computing the derivative, the statement follows because, from Lemma \ref{lem:int.mix.mom}, the solution of the Riccati equation starting at $(z_1+z_2,u)$ exists up to time $T$, for each $u$ in $B_{\tilde\ep}\big(r\p1_{z_1,z_2}\big)$.
\end{proof}
\end{appendices}

\bibliographystyle{plain}
\bibliography{bibliography}

\begin{thebibliography}{10}

\bibitem{AMST06}
H.~Albrecher, W.~Mayer, P.and~Schoutens, and J.~Tistaert.
\newblock The little {Heston} trap.
\newblock {\em Wilmott Magazine, January issue}, pages 83--92, 2006.

\bibitem{Al15}
A.~Alfonsi.
\newblock {\em Affine diffusions and related processes: simulation, theory and
  applications}.
\newblock Springer, 2015.

\bibitem{AP07}
L.~B.~G. Andersen and V.~V. Piterbarg.
\newblock Moment explosions in stochastic volatility models.
\newblock {\em Finance Stoch.}, 11(1):29--50, 2007.

\bibitem{B17}
Erhan Bayraktar and Zhou Zhou.
\newblock Super-hedging american options with semi-static trading strategies
  under model uncertainty.
\newblock {\em arXiv:1604.04608}, 2017.

\bibitem{beiglbock2013model}
Mathias Beiglb{\"o}ck, Pierre Henry-Labord{\`e}re, and Friedrich Penkner.
\newblock Model-independent bounds for option prices - a mass transport
  approach.
\newblock {\em Finance and Stochastics}, 17(3):477--501, 2013.

\bibitem{CS07}
P.~Carr and J.~Sun.
\newblock A new approach for option pricing under stochastic volatility.
\newblock {\em Review of Derivatives Research}, 10(2):87--150, 2007.

\bibitem{carr2011semi}
Peter Carr.
\newblock Semi-static hedging of barrier options under {P}oisson jumps.
\newblock {\em International Journal of Theoretical and Applied Finance},
  14(07):1091--1111, 2011.

\bibitem{C13}
W.~Cheney.
\newblock {\em Analysis for applied mathematics}, volume 208.
\newblock Springer Science \& Business Media, 2013.

\bibitem{DTHKR17}
P.~Di~Tella, M.~Haubold, and M.~Keller-Ressel.
\newblock Semi-static and sparse variance-optimal hedging.
\newblock {\em Preprint}, 2017.

\bibitem{D12}
G.~G. Drimus.
\newblock Options on realized variance by transform methods: a non-affine
  stochastic volatility model.
\newblock {\em Quantitative Finance}, 12(11):1679--1694, 2012.

\bibitem{DFS03}
D.~Duffie, D.~Filipovi{\'c}, and W.~Schachermayer.
\newblock Affine processes and applications in finance.
\newblock {\em Ann. Appl. Probab.}, 13(3):984--1053, 2003.

\bibitem{F09}
D.~Filipovi{\'c}.
\newblock {\em Term-structure models}.
\newblock Springer Finance. Springer-Verlag, Berlin, 2009.
\newblock A graduate course.

\bibitem{FS86}
H.~F{\"o}llmer and D.~Sondermann.
\newblock Hedging of nonredundant contingent claims, 1986.
\newblock Contributions to Mathematical Economics (W. Hildenbrand and A.
  Mas-Colell, eds.) 205-223.

\bibitem{FK10}
P.~K. Friz and M.~Keller-Ressel.
\newblock Moment explosions.
\newblock {\em Encyclopedia of Quantitative Finance}, 2010.

\bibitem{G16}
M.~Grasselli.
\newblock The 4/2 stochastic volatility model: a unified approach for the
  {Heston} and the 3/2 model.
\newblock {\em Mathematical Finance. doi: 10.1111/mafi.12124 (to appear)},
  2016.

\bibitem{HWY92}
S.~He, J.~Wang, and J.~Yan.
\newblock {\em Semimartingale theory and stochastic calculus}.
\newblock Taylor \& Francis, 1992.

\bibitem{H93}
S.~Heston.
\newblock A closed-form solution of options with stochastic volatility with
  applications to bond and currency options.
\newblock {\em The Review of Financial Studies}, 6:327--343, 1993.

\bibitem{HK06}
F.~Hubalek, J.~Kallsen, and L.~Krawczyk.
\newblock Variance-optimal hedging for processes with stationary independent
  increments.
\newblock {\em Ann. Appl. Probab.}, 16(2):853--885, 2006.

\bibitem{JK05}
P.~J{\"a}ckel and C.~Kahl.
\newblock Not-so-complex logarithms in the heston model.
\newblock {\em Wilmott magazine}, 19(9):94--103, 2005.

\bibitem{J79}
J.~Jacod.
\newblock {\em Calcul stochastique et probl{\`e}mes de martingales.}
\newblock Springer, distributed by Imported Publications, 1979.

\bibitem{JS00}
J.~Jacod and A.~Shiryaev.
\newblock {\em Limit theorems for stochastic processes}, volume 288 of {\em
  Grundlehren der Mathematischen Wissenschaften}.
\newblock Springer-Verlag, Berlin, second edition, 2003.

\bibitem{JCY09}
M.~Jeanblanc, M.~Yor, and M.~Chesney.
\newblock {\em Mathematical methods for financial markets}.
\newblock Springer Finance. Springer-Verlag, Berlin, 2009.

\bibitem{KalM10}
J.~Kallsen and J.~Muhle-Karbe.
\newblock Exponentially affine martingales, affine measure changes and
  exponential moments of affine processes.
\newblock {\em Stochastic Process. Appl.}, 120(2):163--181, 2010.

\bibitem{KalMV11}
J.~Kallsen, J.~Muhle-Karbe, and M.~Vo{\ss}.
\newblock Pricing options on variance in affine stochastic volatility models.
\newblock {\em Math. Finance}, 21(4):627--641, 2011.

\bibitem{KalP07}
J.~Kallsen and A.~Pauwels.
\newblock Variance-optimal hedging in general affine stochastic volatility
  models.
\newblock {\em Adv. in Appl. Probab.}, 42(1):83--105, 2010.

\bibitem{K11}
M.~Keller-Ressel.
\newblock Moment explosions and long-term behavior of affine stochastic
  volatility models.
\newblock {\em Math. Finance}, 21(1):73--98, 2011.

\bibitem{KM15}
M.~Keller-Ressel and E.~Mayerhofer.
\newblock Exponential moments of affine processes.
\newblock {\em Ann. Appl. Probab.}, 25(2):714--752, 2015.

\bibitem{lewis2000option}
Alan~L. Lewis.
\newblock {\em Option Valuation under Stochastic Volatility}.
\newblock Finance Press, 2000.

\bibitem{Me76}
P.-A. Meyer.
\newblock Un cours sur les int{\'e}grales stochastiques (expos{\'e}s 1 {\`a}
  6).
\newblock {\em S{\'e}minaire de probabilit{\'e}s de Strasbourg}, 10:245--400,
  1976.

\bibitem{N94}
A.~Neuberger.
\newblock The log contract.
\newblock {\em The Journal of Portfolio Management}, 20(2):74--80, 1994.

\bibitem{Ru87}
W.~Rudin.
\newblock {\em Real and complex analysis}.
\newblock Tata McGraw-Hill Education, 1987.

\bibitem{S84}
M.~Schweizer.
\newblock Varianten der {B}lack-{S}choles-{F}ormel.
\newblock Master's thesis, ETH Z{\"u}rich, 1984.

\bibitem{Sch99}
M.~Schweizer.
\newblock A guided tour through quadratic hedging approaches.
\newblock Technical report, Discussion Papers, Interdisciplinary Research
  Project 373: Quantification and Simulation of Economic Processes, 1999.

\bibitem{StYo78}
C.~Stricker and M.~Yor.
\newblock Calcul stochastique d{\'e}pendant d'un param{\`e}tre.
\newblock {\em Zeitschrift f{\"u}r Wahrscheinlichkeitstheorie und verwandte
  Gebiete}, 45(2):109--133, 1978.

\end{thebibliography}

\end{document}